\title{\textbf{The Kontsevich Integral in Book Notation}}
\author{Renaud Gauthier\\ \\Kansas State University}
\theoremstyle{definition}
\newtheorem*{acknowledgments}{Aknowledgments}
\newtheorem{DefZf}{Definition}[subsection]
\newtheorem{thekill}[DefZf]{Remark}
\newtheorem{caution}{Note}[subsection]
\newtheorem{xLemma1&2}[caution]{Remark}
\newtheorem{Zfmultitop}[caution]{Proposition}
\newtheorem{pZfeqZ}[caution]{Theorem}
\newtheorem{ZfindofM}[caution]{Proposition}
\newtheorem{cf2pm^2eq1}[ZDTeqDZT]{Lemma}
\newtheorem{Ltilde}{Definition}[subsubsection]
\newtheorem{Thm}{Theorem}[section]
\newtheorem{Iso1}{Proposition}[section]
\newtheorem{q-tangle}[word]{Definition}
\newtheorem{Zhumpvert}{Proposition}[subsection]
\newtheorem{rmkslice}[Zhumpvert]{Remark}
\newtheorem{humptos}[Zhumpvert]{Proposition}
\newtheorem{Zhumpget}[Zhumpvert]{Proposition}
\newtheorem{stohump}[Zhumpvert]{Proposition}
\newtheorem{humptohump}[Zhumpvert]{Proposition}
\newtheorem{Iso3}{Proposition}[subsection]
\newtheorem{Iso4}{Proposition}[subsection]
\newtheorem{Or}{Proposition}[section]
\newtheorem{ZfSLeqSZfL}{Proposition}[subsection]
\newtheorem{ZfhSLeqSZfhL}[ZfSLeqSZfL]{Theorem}
\newtheorem{ZftSLeqSZftL}[ZfSLeqSZfL]{Theorem}
\newtheorem{Zfta}{Proposition}[section]
\newtheorem{bareZfta}[Zfta]{Proposition}
\newtheorem{generalbareZfta}[Zfta]{Corollary}
\newtheorem{Xpm}[Zfta]{Lemma}
\newtheorem{!book}[Zfta]{Proposition}
\newtheorem{detect}[Zfta]{Lemma}
\newtheorem{Rmkdetect}[Zfta]{Remark}
\newtheorem{findlocmax}[Zfta]{Lemma}
\newtheorem{maxX}[Zfta]{Proposition}
\newtheorem{detectXpm}[Zfta]{Proposition}
\newtheorem{detectkof}[Zfta]{Lemma}
\newtheorem{exdetect}[Zfta]{Example}
\newtheorem{plat}[Zfta]{Proposition}
\newtheorem{findlocmin}[Zfta]{Lemma}
\newtheorem{Recovery}[Zfta]{Theorem}
\newtheorem{Rmkidz}[Zfta]{Remark}
\newtheorem{Goodbehavior}[DefZf]{Theorem}
\DeclareMathOperator*{\stck}{\times}
\newcommand{\beq}{\begin{equation}}
\newcommand{\eeq}{\end{equation}}
\newcommand{\eps}{\epsilon}
\newcommand{\varz}{\vartriangle \!\! z}
\newcommand{\twopii}{2 \pi i}
\newcommand{\barA}{\overline{\mathcal{A}}}
\newcommand{\hatA}{\hat{\mathcal{A}}}
\newcommand{\dlog}{\text{dlog}}
\begin{document}
\maketitle
\begin{abstract}
We introduce a matrix representation of a chord on a tangle which leads us to representing tangle chord diagrams as stacks of matrices that we call books. We show that band sum moves, Reidemeister moves as well as orientation changes, are implemented on $\widetilde{Z}_f(L)$ (\cite{RG}) by matrix congruences. We prove that being given the bare framed Kontsevich integral $Z_f(L)$ in book notation for some unknown link $L$, we can determine what the link $L$ is, as well as the projection of $Z_f(L)$ in the original completed algebra of chord diagrams $\overline{\mathcal{A}}(\amalg S^1)$.
\end{abstract}

\newpage

\section{Introduction}
In \cite{RG} we introduced an isotopy invariant $\widetilde{Z}_f$ that is well-behaved under band sum moves, a first step towards defining an invariant of 3-manifolds, also defined in the same paper. Practically, we are now seeking a way to write the Kontsevich integral in a more compact form, with the hope that deformations of link components, orientation changes as well as band sum moves can be easily implemented on $\widetilde{Z}_f(L)$ in this new form. We introduce a representation of tangle chord diagrams by stacks of matrices that we refer to as books. $\widetilde{Z}_f(L)$ written in book notation turns out to be exceptionally well-behaved under all these moves. The main idea consists in considering, for each chord on a link, a block matrix, with blocks corresponding to link components. Within each block, indexation of matrix components is done by considering a given link $L$, and sectioning off the whole link into vertical strips. The drawing of lines delimiting strips is performed by considering each local extremum. For a given local extremum, we draw a vertical line going through the local extremum, as well as two neighboring lines, one on the right, and one on the left of the local extremum, so as to ensure that no two local extrema can end up in the same strip. Armed with this formalism, we simply replace tangle chord diagrams by their corresponding books in the expression for $\widetilde{Z}_f(L) \in \overline{\mathcal{A}}(L)$ to yield $\widetilde{Z}_f(L)$ in book notation. The partitioning of links into vertical strips being dependent on the chosen link, for a given $q$-components link $L$ giving rise to $N$ vertical strips, we work with pages that are $qN \times qN$ matrices. Thus for one such link $L$ we have $Z_f(L)$ in the completion of the algebra:
\beq
\oplus_{m \geq 0}\Big( \text{Mat}_{qN \times qN}   \Big)^m := \mathcal{B}[qN]
\eeq
which we denote by $\overline{\mathcal{B}}[qN]$. By increasing the size of matrices, it is clear that we consider a sequence of such completed algebra with embeddings as in:
\beq
\overline{\mathcal{B}}[1] \rightarrow \overline{\mathcal{B}}[2] \rightarrow \cdots \rightarrow \overline{\mathcal{B}}[n] \rightarrow \cdots
\eeq
that we denote by $\overline{\mathcal{B}}$. Such an element as $Z_f(L)$, initially in $\overline{\mathcal{B}}[qN]$, can be mapped to $\overline{\mathcal{B}}[(q+1)(N+4)]$ by adding a trivial circle to it. If we add a second trivial circle to $Z_f(L)$, we have a resulting element in $\overline{\mathcal{B}}[(q+2)(N+8)]$. Proceeding in this fashion we define what we call the thread of $Z_f(L)$, and we represent it by:
\beq
\overline{\mathcal{B}}[qN] \rightarrow \overline{\mathcal{B}}[(q+1)(N+4)] \rightarrow \cdots \rightarrow \overline{\mathcal{B}}[(q+m)(N+4m)] \rightarrow \cdots
\eeq
the map from one subalgebra to the next being given by the simple addition of a trivial circle. Using this concept that we call threading, we can determine some information encoded in $Z_f(L)$ as an element of $\overline{\mathcal{B}}[qN]$ that was otherwise inaccessible. We do not repeat the work of \cite{RG} concerning the behavior of $\check{Z}_f$ and $\widetilde{Z}_f$ under handle slide, and we will just recall the construction of $\widetilde{Z}_f$, at least as much as is required to develop the book formalism.\\

In section 2 we define the framed Kontsevich integral $\widetilde{Z}_f$ and introduce the requisite theory. In section 3 we present the book notation, an alternate representation of tangle chord diagrams of degree $n$ by stacks of $n$ matrices, each matrix giving the position of one chord. We study the behavior of $\widetilde{Z}_f$ in book notation under band sum moves, Reidemeister moves and orientation changes in sections 4, 5 and 6. In section 7, we prove that $Z_f$ in book notation is faithful on links, from which we recover the original expression of $Z_f(L) \in \overline{\mathcal{A}}(\amalg S^1)$. \\

\begin{acknowledgments}
The author would like to thank D.Yetter for fruitful discussions and for spending long hours going over the intricacies of this work, as well as D.Auckly and V.Turchin for very helpful conversations.
\end{acknowledgments}

\section{Preliminary definitions and construction of $\widetilde{Z}_f$}

\subsection{The algebra $\mathcal{A}$ of chord diagrams}
For a singular oriented knot whose only singularities are transversal self-intersections, the preimage of each singular crossing under the embedding map defining the knot yields two distinct points on $S^1$. Each singular point in the image therefore yields a pair of points on $S^1$ that we conventionally connect by a chord for book keeping purposes \cite{K}. A knot with $m$ singular points will yield $m$ distinct chords on $S^1$. We refer to such a circle with $m$ chords on it as a chord diagram of degree $m$, the degree being the number of chords. The support of the graph is an oriented $S^1$, and it is regarded up to orientation preserving diffeomorphisms of the circle. More generally, for a singular oriented link all of whose singularities are double-crossings, the preimage of each singular crossing under the embedding map defining the link yields pairs of distinct points on possibly different circles depending on whether the double crossing was on a same component or between different components of the link. We also connect points making a pair by a chord. A $q$-components link with $m$ singular points will yield $m$ chords on $\coprod^q S^1$. We still call such a graph a chord diagram. The support now is $\coprod^q S^1$ regarded up to orientation preserving diffeomorphism of each $S^1$. We denote by $\mathcal{D}(\amalg^q S^1)$ the $\mathbb{C}$-vector space spanned by chord diagrams with support on $\amalg^q S^1$. We write $\mathcal{D}$ for $\mathcal{D}(S^1)$. There is a grading on $\mathcal{D}(\amalg^q S^1)$ given by the number of chords featured in a diagram. If $\mathcal{D}^{(m)}(\amalg^q S^1)$ denotes the subspace of chord diagrams of degree $m$, then we can write:
\beq
\mathcal{D}(\amalg^q S^1)=\oplus_{m\geq 0} \mathcal{D}^{(m)}(\amalg^q S^1)
\eeq
We demand that chord diagrams with degree greater than 2 satisfy the 4-T relation which locally looks like:\\
\setlength{\unitlength}{1cm}
\begin{picture}(1,3)(-1,-0.5)
\multiput(0,0.75)(0.2,0){5}{\line(1,0){0.1}}
\multiput(1,0.25)(0.2,0){5}{\line(1,0){0.1}}
\put(0,0.9){\vector(0,1){0.2}}
\put(1,0.9){\vector(0,1){0.2}}
\put(2,0.9){\vector(0,1){0.2}}
\linethickness{0.3mm}
\put(0,0){\line(0,1){1}}
\put(1,0){\line(0,1){1}}
\put(2,0){\line(0,1){1}}
\put(2.5,0.5){$+$}
\end{picture}
\setlength{\unitlength}{1cm}
\begin{picture}(1,3)(-3,-0.5)
\multiput(0,0.75)(0.2,0){5}{\line(1,0){0.1}}
\multiput(0,0.25)(0.2,0){10}{\line(1,0){0.1}}
\put(0,0.9){\vector(0,1){0.2}}
\put(1,0.9){\vector(0,1){0.2}}
\put(2,0.9){\vector(0,1){0.2}}
\linethickness{0.3mm}
\put(0,0){\line(0,1){1}}
\put(1,0){\line(0,1){1}}
\put(2,0){\line(0,1){1}}
\put(2.5,0.5){$=$}
\end{picture}
\setlength{\unitlength}{1cm}
\begin{picture}(1,3)(-5,-0.5)
\multiput(0,0.25)(0.2,0){5}{\line(1,0){0.1}}
\multiput(1,0.75)(0.2,0){5}{\line(1,0){0.1}}
\put(0,0.9){\vector(0,1){0.2}}
\put(1,0.9){\vector(0,1){0.2}}
\put(2,0.9){\vector(0,1){0.2}}
\linethickness{0.3mm}
\put(0,0){\line(0,1){1}}
\put(1,0){\line(0,1){1}}
\put(2,0){\line(0,1){1}}
\put(2.5,0.5){$+$}
\end{picture}
\setlength{\unitlength}{1cm}
\begin{picture}(1,3)(-7,-0.5)
\multiput(0,0.25)(0.2,0){5}{\line(1,0){0.1}}
\multiput(0,0.75)(0.2,0){10}{\line(1,0){0.1}}
\put(0,0.9){\vector(0,1){0.2}}
\put(1,0.9){\vector(0,1){0.2}}
\put(2,0.9){\vector(0,1){0.2}}
\linethickness{0.3mm}
\put(0,0){\line(0,1){1}}
\put(1,0){\line(0,1){1}}
\put(2,0){\line(0,1){1}}
\end{picture}\\ \\
where solid lines are intervals on $\amalg^q S^1$ on which a chord foot rests, and arrows indicate the orientation of each strand. We demand that chord diagrams also satisfy the framing independence relation: if a chord diagram has a chord forming an arc on $S^1$ with no other chord ending in between its feet, then the chord diagram is set to zero. The resulting space is the $\mathbb{C}$-vector space generated by chord diagrams mod the 4-T relation and framing independence and is denoted by $\mathcal{A}(\amalg^q S^1)$. We write $\mathcal{A}$ for $\mathcal{A}(S^1)$. The grading of $\mathcal{D}(\amalg^q S^1)$ is preserved while modding out by the 4-T and framing independence relations, inducing a grading on $\mathcal{A}(\amalg^q S^1)$:
\beq
\mathcal{A}(\amalg^q S^1)=\oplus_{m \geq 0}\mathcal{A}^{(m)}(\amalg^q S^1)
\eeq
where $\mathcal{A}^{(m)}(\amalg^q S^1)$ is obtained from $\mathcal{D}^{(m)}(\amalg^q S^1)$ by modding out by the 4-T and the framing independence relations. The connected sum of circles can be extended to chorded circles, thereby defining a product on $\mathcal{A}$ that we denote by $\cdot$, making it into an algebra that is associative and commutative ~\cite{DBN}. More generally $\mathcal{A}(\amalg^q S^1)$ is a module over $\otimes^q \mathcal{A}$. The Kontsevich integral will be valued in the graded completion $\barA(\amalg^q S^1)$ of the algebra $\mathcal{A}(\amalg^q S^1)$.\\

\subsection{Original definition of the Kontsevich integral}
As far as knots are concerned, we will work with Morse knots, and for that purpose we consider the following decomposition of $\mathbb{R}^3$ as the product of the complex plane and the real line: $\mathbb{R}^3=\mathbb{R}^2\times \mathbb{R} \simeq \mathbb{C}\times \mathbb{R}$, with local coordinates $z$ in the complex plane and $t$ on the real line for time. A Morse knot $K$ is such that $t\circ K$ is a Morse function on $S^1$. If we denote by $Z$ the Kontsevich integral functional on knots, if $K$ is a Morse knot, we define ~\cite{K}, ~\cite{DBN}, ~\cite{CL2}:
\beq
Z(K):=\sum_{m\geq 0} \frac{1}{(2 \pi i)^m}\int_{t_{min}< t_1<...<t_m<t_{max}}\sum_{P\; applicable}(-1)^{\varepsilon(P)}D_P\prod_{1\leq i \leq m}\dlog \vartriangle \!\!z(t_i)[P_i] \label{orZK} \nonumber
\eeq
where $t_{min}$ and $t_{max}$ are the min and max values of $t$ on $K$ respectively, $P$ is an $m$-vector each entry of which corresponds to a pair of points on the image of the knot $K$. We write $P=(P_1,...,P_m)$, where the $i$-th entry $P_i$ corresponds to a pair of points on the knot at height $t_i$, and we can denote these two points by $z_i$ and $z'_i$, so that we can write $\varz(t_i)[P_i]:=z_i-z'_i$, and we refer to such $P$'s as pairings. We denote by $K_P$ the knot $K$ with $m$ pairs of points placed on it following the prescription given by $P$, and then connecting points at a same height by a chord. A pairing is said to be applicable if each entry corresponds to a pair of two distinct points on the knot, at the same height ~\cite{DBN}. For a pairing $P=(P_1,\cdots,P_m)$ giving the position of $m$ pairs of points on $K$, we denote by $\varepsilon(P)$ the number of those points ending on portions of $K$ that are locally oriented down. For example if $P=(z(t),z'(t))$ and $K$ is locally oriented down at $z(t)$, then $z(t)$ will contribute 1 to $\varepsilon(P)$. We also define the length of $P=(P_1,\cdots,P_m)$ to be $|P|=m$. If we denote by $\iota_K$ the embedding defining the knot then $D_P$ is defined to be the chord diagram one obtains by taking the inverse image of $K_P$ under $\iota_K$: $D_P =\iota_K ^{-1} K_P$. This generalizes immediately to the case of Morse links, and in this case the geometric coefficient will not be an element of $\barA$ but will be an element of $\barA (\coprod_q S^1)$ if the argument of $Z$ is a $q$-components link.\\

Now if we want to make this integral into a true knot invariant, then we correct it as follows. Consider the embedding in $S^3$ of the trivial knot as:
\beq
U=
\setlength{\unitlength}{0.3cm}
\begin{picture}(5,4)(-1,1)
\thicklines
\put(1,3){\oval(2,2)[t]}
\put(3,3){\oval(2,2)[b]}
\put(5,3){\oval(2,2)[t]}
\put(3,3){\oval(6,5)[b]}
\end{picture}
\eeq
We now make the following correction ~\cite{K}:
\beq
\hat{Z}:=Z(\:\setlength{\unitlength}{0.1cm}
\begin{picture}(5,6)
\thicklines
\put(1,2){\oval(2,2)[t]}
\put(3,2){\oval(2,2)[b]}
\put(5,2){\oval(2,2)[t]}
\put(3,2){\oval(6,5)[b]}
\end{picture}\;)^{-m}.Z \label{modK}
\eeq
where the dot is the product on chord diagrams extended by linearity, and $m$ is a function that captures the number of maximal points of any knot $K$ that is used as an argument of $Z$. Defining $\nu := Z(U)^{-1}$, this reads $\hat{Z}=\nu^m \cdot Z$. Equivalently, we can define $\hat{Z}$ as being $Z$ with the provision that $\nu$ acts on each maximal point of a given knot $K$ in the expression for $Z(K)$. As pointed out in ~\cite{SW1}, there are two possible corrections to the Kontsevich integral: the one we just presented, and the other one obtained by using $1-m$ as an exponent of the Kontsevich integral of the hump instead of just $-m$. In this manner the corrected version is multiplicative under connected sum, while using the above correction it behaves better under cabling operations. We will argue later that a modified version of $\hat{Z}$ using $1-m$ as an exponent of $\nu$ is the right object to consider for handle slide purposes and the construction of topological invariants of 3-manifolds.  In the case of links, there will be one such correction for each component of the link, with a power $m_i$ on the correction term for the $i$-th component, where $m_i$ is the number of maximal points of the $i$-th link component. Equivalently, $\hat{Z}(L)$ is the same as $Z(L)$ save that every $i$-th link component in the expression for $Z(L)$ is multiplied by $\nu^{m_i}$, $1 \leq i \leq q$.\\

\subsection{The Kontsevich Integral of tangles}
We generalize the original definition of the Kontsevich integral of knots to the Kontsevich integral of tangles as discussed in ~\cite{DBN}, ~\cite{LM1}, ~\cite{ChDu}.\\

For this purpose, we will define a slightly more general algebra of chord diagrams ~\cite{LM5}: For $X$ a compact oriented 1-dimensional manifold with labeled components, a chord diagram with support on $X$ is the manifold $X$ together with a collection of chords with feet on $X$. We represent such chord diagrams by drawing the support $X$ as solid lines, the graph consisting of dashed chords. We introduce an equivalence relation on the set of all chord diagrams: two chord diagrams $D$ and $D'$ with support on $X$ are equivalent if there is a homeomorphism $f:D \rightarrow D'$ such that the restriction $f|_X$ of $f$ to $X$ is a homeomorphism of $X$ that preserves components and orientation. We denote by $\mathcal{A}(X)$ the complex vector space spanned by chord diagrams with support on $X$ modulo the 4-T and framing independence relations. $\mathcal{A}(X)$ is still graded by the number of chords as:
\beq
\mathcal{A}(X)=\oplus_{m \geq 0} \mathcal{A}^{(m)}(X)
\eeq
where $\mathcal{A}^{(m)}(X)$ is the complex vector space spanned by chord diagrams of degree $m$. We write $\overline{\mathcal{A}}(X)$ for the graded completion of $\mathcal{A}(X)$.  We define a product on $\mathcal{A}(X)$ case by case. For example, if $X=\amalg^{q >1}S^1$, there is no well defined product defined on $\mathcal{A}(X)$. If $X=I^N$, the concatenation induces a well defined product on $\mathcal{A}(I^N)$. The product of two chord diagrams $D_1$ and $D_2$ in this case is defined by putting $D_1$ on top of $D_2$ and is denoted by $D_1 \times D_2$. More generally, for $D_i \in \mathcal{A}(X_i)$, $i=1,2$, $D_1 \times D_2$ is well-defined if $X_1$ and $X_2$ can be glued strand-wise. One chord diagram of degree 1 we will use repeatedly is the following:
\beq
\Omega_{ij}=
\setlength{\unitlength}{0.5cm}
\begin{picture}(3,3)
\multiput(0.4,0.5)(0.3,0){3}{\circle*{0.15}}
\multiput(1.4,1)(0.3,0){7}{\line(1,0){0.1}}
\multiput(3.9,0.5)(0.3,0){3}{\circle*{0.15}}
\put(0.1,-0.5){$\text{\tiny 1}$}
\put(1.5,-0.5){$\text{\tiny i}$}
\put(3.6,-0.5){$\text{\tiny j}$}
\put(5,-0.5){$\text{\tiny N}$}
\thicklines
\put(0,0){\vector(0,1){1.6}}
\put(1.4,0){\vector(0,1){1.6}}
\put(3.5,0){\vector(0,1){1.6}}
\put(4.9,0){\vector(0,1){1.6}}
\linethickness{0.5mm}
\put(0,0){\line(0,1){2}}
\put(1.4,0){\line(0,1){2}}
\put(3.5,0){\line(0,1){2}}
\put(4.9,0){\line(0,1){2}}
\end{picture}\label{Omij}
\eeq
\\
For $T$ a tangle, we define $Z(T) \in \overline{\mathcal{A}}(T)$ by:
\beq
Z(T):=\sum_{m\geq 0} \frac{1}{(2 \pi i)^m}\int_{t_{min}< t_1<...<t_m<t_{max}}\sum_{P\; applicable}(-1)^{\varepsilon(P)}T_P\prod_{1\leq i \leq m}\dlog \vartriangle \!\!z[P_i] \nonumber
\eeq
exactly as we define $Z(K)$ in ~\eqref{orZK} with the difference that $T_P$ is the tangle $T$ with $m$ chords placed on it following the prescription given by $P$. Following ~\cite{ChDu}, we refer to $T_P$ as a tangle chord diagram. We define the Kontsevich integral of a tangle $T$ to be trivial if $Z(T)=T$. When working with links, we will sometimes omit the orientation on link components for convenience unless it is necessary to specify them.\\

We will sometimes need the map $S$ on chord diagrams \cite{LM5}: suppose $C$ is a component of $X$. If we reverse the orientation of $C$, we get another oriented manifold from $X$ that we will denote by $X'$. This induces a linear map:
\beq
S_{(C)}:\mathcal{A}(X) \rightarrow \mathcal{A}(X') \label{SC}
\eeq
defined by associating to any chord diagram $D$ in $\mathcal{A}(X)$ the element $S_{(C)}(D)$ obtained from $D$ by reversing the orientation of $C$ and multiplying the resulting chord diagram by $(-1)^m$ where $m$ is the number of vertices of $D$ ending on the component $C$. Suppose $Z(T)$ is known for some oriented tangle $T$, and $T'$ is an oriented tangle with the same skeleton as $T$'s, but with possible reversed orientations on some of its components. Then one can find $Z(T')$ by symply applying $S_C(Z(T))$ iteratively as many times as there are components $C$ of $T'$ that have an orientation different from that of $T$.\\

\subsection{Integral of framed oriented links}
In the framed case, we no longer impose the framing independence relation. It follows that when we compute the Kontsevich integral $Z(T)$ of a tangle $T$ with local extrema, we will run into computational problems. For instance, the Kontsevich integral of the following tangle:
\beq
\setlength{\unitlength}{0.5cm}
\begin{picture}(13,5)
\thicklines
\put(0,0){\line(0,1){4}}
\put(3,0){\line(0,1){4}}
\put(8,0){\line(0,1){4}}
\put(11,0){\line(0,1){4}}
\multiput(1,2)(0.3,0){3}{\circle*{0.15}}
\multiput(9,2)(0.3,0){3}{\circle*{0.15}}
\qbezier(4,0)(5.5,8)(7,0)
\put(0.2,-0.5){\text{\tiny 1}}
\put(2.8,-0.5){\text{\tiny k-1}}
\put(3.8,-1){\text{\tiny k}}
\put(6.5,-1){\text{\tiny k+1}}
\put(7.8,-0.5){\text{\tiny k+2}}
\put(11.2,-0.5){\text{\tiny N}}
\end{picture}
\eeq
\\
\\
has possible integrands $\dlog(z_k-z_{k+1})$ in degree 1 corresponding to the chord diagram:\\
\beq
\setlength{\unitlength}{0.5cm}
\begin{picture}(13,5)
\thicklines
\qbezier(4,0)(5.5,8)(7,0)
\put(3.8,-1){\text{\tiny k}}
\put(6.5,-1){\text{\tiny k+1}}
\multiput(4.2,1)(0.4,0){7}{\line(1,0){0.2}}
\end{picture}
\eeq \\ \\
where $z_k$ and $z_{k+1}$ are local coordinates on the strands indexed by $k$ and $k+1$ respectively, and such integrands are made to vanish by virtue of the framing independence in the unframed case:
\beq
\setlength{\unitlength}{0.5cm}
\begin{picture}(5,3)(-2,0)
\thicklines
\multiput(0,0)(2,0){2}{\line(0,1){1}}
\put(1,1){\oval(2,2)[t]}
\multiput(0,1)(0.2,0){10}{\line(1,0){0.1}}
\end{picture}=\quad 0
\eeq
However, in the framed setting, we do not impose this relation, and we therefore have to make these integrals convergent. We proceed as follows ~\cite{LM1}: if $\omega$ is the chord diagram defined  by:
\beq
\setlength{\unitlength}{0.5cm}
\begin{picture}(9,5)
\thicklines
\qbezier(0,0)(3,8)(6,0)
\multiput(1.2,2)(0.5,0){8}{\line(1,0){0.2}}
\put(-2,2){$\omega =$}
\end{picture} \nonumber
\eeq
\\
then we can define $\varepsilon^{\pm \omega/(\twopii)}$ for some $\varepsilon \in \mathbb{R}$ as a formal power series expansion of $\exp(\pm\frac{\omega}{\twopii}\log\varepsilon)$ where:
\beq
\setlength{\unitlength}{0.5cm}
\begin{picture}(9,5)
\thicklines
\qbezier(0,0)(3,8)(6,0)
\multiput(1.7,3)(0.5,0){6}{\line(1,0){0.2}}
\multiput(0.7,1)(0.5,0){10}{\line(1,0){0.2}}
\multiput(3,1.5)(0,0.5){3}{\circle*{0.15}}
\put(-2,2){$\omega^n =$}
\put(6,1.8){\Bigg\}}
\put(7,2){$n$}
\end{picture} \nonumber
\eeq
\\
We first define the following tangles and chord diagrams:
\begin{align}
T_a=
\setlength{\unitlength}{0.3cm}
\begin{picture}(5,7)
\thicklines
\put(0,0){\line(0,1){4}}
\put(4,0){\line(0,1){4}}
\put(3.5,2){\vector(-1,0){3}}
\put(0.5,2){\vector(1,0){3}}
\put(1.5,0.5){$a$}
\put(2,4){\oval(4,4)[t]}
\end{picture} &\qquad
\omega=
\setlength{\unitlength}{0.3cm}
\begin{picture}(8,4)
\thicklines
\put(2,0){\oval(4,4)[t]}
\multiput(0.3,1)(0.4,0){9}{\circle*{0.2}}
\put(3.5,0){\vector(-1,0){3}}
\put(0.5,0){\vector(1,0){3}}
\put(1.5,-1){$\mu$}
\end{picture}\\
T_a ^{\mu}&=
\setlength{\unitlength}{0.3cm}
\begin{picture}(8,7)(0,3)
\thicklines
\put(0,0){\line(0,1){3}}
\put(0,0){\vector(0,1){2}}
\put(7,0){\line(0,1){3}}
\put(7,3){\vector(0,-1){1}}
\put(2,3){\oval(4,4)[tl]}
\put(5,3){\oval(4,4)[tr]}
\put(2,6){\oval(2,2)[br]}
\put(5,6){\oval(2,2)[bl]}
\put(2,6.5){\line(1,0){3}}
\put(2,6.5){\vector(1,0){1}}
\put(5,6.5){\vector(-1,0){1}}
\put(3,7){$\mu$}
\put(6.5,-0.5){\vector(-1,0){6}}
\put(0.5,-0.5){\vector(1,0){6}}
\put(3,-2){$a$}
\end{picture}
\end{align}
\\
\\
\\
It is convenient to define a formal tangle chord diagram consisting of a single chord stretching between two strands, and to call such a graph by $\Omega$. This enables one to write $T_a^{\mu}$ above with $m$ chords on it as $T_a^{\mu} \times \Omega^m$ or $\Omega^m \times T_a ^{\mu}$. We would also have $\omega$ above written simply as $T_{\mu} \times \Omega$. If we define $\hat{\mathcal{A}}=\mathcal{D}/\text{\small 4-T}$ using the notation of \cite{K}, then we can define $S: \mathcal{A} \rightarrow \hat{\mathcal{A}}$ to be the standard inclusion algebra map that maps elements of $\mathcal{A}$ to the subspace of $\hat{\mathcal{A}}$ in which all basis chord diagrams with an isolated chord have coefficient zero ~\cite{BNGRT}, ~\cite{ChDu}.
\begin{DefZf}[\cite{LM1}] \label{DefNormZf}
The normalization for $Z_f$ of the above tangle $T_a$ is defined as:
\beq
Z_f(T_a):=\lim_{\mu \rightarrow 0} \mu^{\omega/(2 \pi i)} \times SZ(T_a ^{\mu}) \label{Zfdef}
\eeq
For a local minimum, if we take our tangle $T^a$ to be a single local minimum, then we use the normalization $Z_f(T^a)=\lim_{\mu \rightarrow 0}SZ(T_{\mu}^a)\times \mu^{-\omega/(2 \pi i)}$. In that expression $\omega$ and $T_{\mu}^a$ are our old $\omega$ and $T^{\mu}_a$ respectively, flipped upside down.
\end{DefZf}
This is the starting point for our full development of a theory of the framed Kontsevich integral \cite{RG}. Recall that we showed this definition amounts to writing:
\beq
Z_f(T_a)=
\setlength{\unitlength}{0.2cm}
\begin{picture}(5,3)(-1,0)
\thicklines
\put(2,0){\oval(4,4)[t]}
\put(3.5,0){\vector(-1,0){3}}
\put(0.5,0){\vector(1,0){3}}
\put(1.7,-1.3){$\text{\tiny 1}$}
\end{picture}
 \times SZ(T_a ^1)=:
\setlength{\unitlength}{0.2cm}
\begin{picture}(5,3)(-1,0)
\thicklines
\put(2,0){\oval(4,4)[t]}
\put(3.5,0){\vector(-1,0){3}}
\put(0.5,0){\vector(1,0){3}}
\put(1.7,-1.3){$\text{\tiny 1}$}
\end{picture} \times SZ(T_a ^{\text{\tiny $\;$1-resolved}})
\eeq
where $T_a ^{\text{\tiny 1-resolved}}$ is $T_a$ seen as being analytically probed by isolated chords near its local maximum, leading to a local resolution of that local maximum into a spout of opening width 1. We also have:
\beq
Z_f(T^a)= SZ(T_1 ^a) \times
\setlength{\unitlength}{0.2cm}
\begin{picture}(5,3)(-1,0)
\thicklines
\put(2,2){\oval(4,4)[b]}
\put(3.5,2){\vector(-1,0){3}}
\put(0.5,2){\vector(1,0){3}}
\put(1.7,2.2){$\text{\tiny 1}$}
\end{picture} = SZ( T^{a \; \text{\tiny $\;$1-resolved}}) \times
\setlength{\unitlength}{0.2cm}
\begin{picture}(5,3)(-1,0)
\thicklines
\put(2,2){\oval(4,4)[b]}
\put(3.5,2){\vector(-1,0){3}}
\put(0.5,2){\vector(1,0){3}}
\put(1.7,2.2){$\text{\tiny 1}$}
\end{picture}
\eeq
This led us to generalize the definition of $Z_f$ of local extrema to:
\beq
Z_f[\text{\small M}](T_a)=
\setlength{\unitlength}{0.2cm}
\begin{picture}(5,3)(-1,0)
\thicklines
\put(2,0){\oval(4,4)[t]}
\end{picture}
 \times SZ(
\setlength{\unitlength}{0.3cm}
\begin{picture}(5,4)(-1,0)
\thicklines
\qbezier(0,0)(1,2)(0,3)
\qbezier(3,0)(2,2)(3,3)
\put(2.5,0){\vector(-1,0){2}}
\put(0.5,0){\vector(1,0){2}}
\put(1,-1){$a$}
\put(2.5,3){\vector(-1,0){2}}
\put(0.5,3){\vector(1,0){2}}
\put(1,3.3){$M$}
\end{picture})=:
\setlength{\unitlength}{0.2cm}
\begin{picture}(5,3)(-1,0)
\thicklines
\put(2,0){\oval(4,4)[t]}
\end{picture}
 \times SZ(T_a ^{\text{\tiny $\;M$-resolved}})
\eeq
and:
\beq
Z_f[\text{\small M}](T^a)=SZ(
\setlength{\unitlength}{0.3cm}
\begin{picture}(5,4)(-1,0)
\thicklines
\qbezier(0,0)(1,1)(0,3)
\qbezier(3,0)(2,1)(3,3)
\put(2.5,0){\vector(-1,0){2}}
\put(0.5,0){\vector(1,0){2}}
\put(1,-1.3){$M$}
\put(2.5,3){\vector(-1,0){2}}
\put(0.5,3){\vector(1,0){2}}
\put(1,3.3){$a$}
\end{picture}) \times
\setlength{\unitlength}{0.2cm}
\begin{picture}(5,3)(-1,0)
\thicklines
\put(2,2){\oval(4,4)[b]}
\end{picture}
=:SZ(T^{a \text{\tiny $\;M$-resolved}})
\times
\setlength{\unitlength}{0.2cm}
\begin{picture}(5,3)(-1,0)
\thicklines
\put(2,2){\oval(4,4)[b]}
\end{picture}
\eeq
\\
Without loss of generality, we can focus on a local maximum. An equivalent definition would be:
\begin{align}
Z_f[\text{\small M}](T_a)&=
\setlength{\unitlength}{0.2cm}
\begin{picture}(5,3)(-1,0)
\thicklines
\put(2,0){\oval(4,4)[t]}
\end{picture}
\times \lim_{(\mu/M) \rightarrow 0} \Big( \mu/M \Big)^{\Omega/2 \pi i} \times T^M_{\mu} \times SZ(T_a ^{\mu}) \label{limmuM}\\
&=\setlength{\unitlength}{0.2cm}
\begin{picture}(5,3)(-1,0)
\thicklines
\put(2,0){\oval(4,4)[t]}
\end{picture} \times \lim_{\xi \rightarrow 0} \xi^{\Omega/2 \pi i} \times T_{M\xi}^M \times SZ( T_a ^{M \xi})
\end{align}
Definition \ref{DefNormZf} being a special case thereof for which $M=1$.
\begin{thekill}
In ~\cite{LM1}, Le and Murakami do not generalize $Z_f$ to tangles with more than two strands. They take their definition to hold irrespective of other strands, very much in line with defining $Z(T, \sigma, \tau)$ for a pre-q-tangle $(T, \sigma, \tau)$ as $lim_{\epsilon \rightarrow 0} \epsilon_{\sigma, r}^{-1} Z(T_{\sigma, \tau, \epsilon}) \epsilon_{\tau, s}$ where $\epsilon_{\sigma, r}$ and $\epsilon_{\tau, s}$ are computed by ignoring strands other than those that make an elementary tangle non-trivial. We generalize their definition to the case of tangles with more than one strand with the aim to actually recovering the Kontsevich integral. We computed:
\end{thekill}
\begin{Zfmultitop}(\cite{RG})  \label{Zfmultitop}
For $M>0$,
\beq
Z_f[\text{\small M}](
\setlength{\unitlength}{0.3cm}
\begin{picture}(15,6)(-1,1)
\thicklines
\put(0,0){\line(0,1){3}}
\put(3,0){\line(0,1){3}}
\multiput(1,2)(0.5,0){3}{\circle*{0.15}}
\put(9,0){\line(0,1){3}}
\put(12,0){\line(0,1){3}}
\multiput(10,2)(0.5,0){3}{\circle*{0.15}}
\put(6,0){\oval(4,4)[t]}
\put(7.5,0){\vector(-1,0){3}}
\put(4.5,0){\vector(1,0){3}}
\put(6,-1){$a$}
\end{picture})=\lim_{\epsilon \rightarrow 0}
\setlength{\unitlength}{0.2cm}
\begin{picture}(5,3)(-1,0)
\thicklines
\put(2,0){\oval(4,4)[t]}
\end{picture}
 \times SZ(
\setlength{\unitlength}{0.3cm}
\begin{picture}(15,6)(-1,1)
\thicklines
\put(0,0){\line(0,1){3}}
\put(3,0){\line(0,1){3}}
\multiput(1,2)(0.5,0){3}{\circle*{0.15}}
\put(10,0){\line(0,1){3}}
\put(13,0){\line(0,1){3}}
\multiput(11,2)(0.5,0){3}{\circle*{0.15}}
\put(6,0){\oval(4,6)[tl]}
\put(7,0){\oval(4,6)[tr]}
\put(6,4){\oval(0.6,2)[r]}
\put(7,4){\oval(0.6,2)[l]}
\put(6,7){\oval(2,4)[bl]}
\put(7,7){\oval(2,4)[br]}
\put(8.5,0){\vector(-1,0){4}}
\put(4.5,0){\vector(1,0){4}}
\put(6,-1){$a$}
\put(5.3,4){\vector(1,0){1}}
\put(7.7,4){\vector(-1,0){1}}
\put(8,4){$\epsilon$}
\put(7.8,7){\vector(-1,0){2.6}}
\put(5.2,7){\vector(1,0){2.6}}
\put(6,7.3){$M$}
\end{picture})
\eeq
\end{Zfmultitop}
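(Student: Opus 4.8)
My plan is to unfold the generalized normalization of $Z_f[\text{M}]$ at a local maximum --- following \eqref{limmuM} but now reading every tangle in it with the two passive bunches of strands (indices $1,\dots,k-1$ and $k+2,\dots,N$) in place --- and to show that, after passing to the limit, it is the maximal cap of width $M$ applied to the Kontsevich integral of the pinched spout drawn in the statement. Concretely, substituting $\mu=M\xi$ and setting $\epsilon=M\xi$, one is looking at the cap applied to $\lim_{\xi\to0}$ of the normalized integral of the tangle obtained by stacking the reopening funnel $T^{M}_{M\xi}$ (from neck width $M\xi$ back to width $M$) on top of $T^{M\xi}_a$ (the maximum resolved to a spout of opening $M\xi$), with all passive strands carried along; and that stacked skeleton is exactly the one appearing inside $SZ$ on the right-hand side of the statement.

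The first reduction is multiplicativity: since $S$ is an algebra map and the Kontsevich integral is multiplicative under vertical composition of tangles (\cite{LM1}, \cite{RG}, as already used for the ancestors of this proposition),
\[
SZ\bigl(T^{M}_{M\xi}\times T^{M\xi}_a\bigr)\ =\ SZ(T^{M}_{M\xi})\times SZ(T^{M\xi}_a).
\]
So what must be shown is that the right-hand side of this equation, times the cap, converges as $\xi\to0$ and that its limit agrees with the limit prescribed by \eqref{limmuM}. Both $SZ(T^{M}_{M\xi})$ and $SZ(T^{M\xi}_a)$ are singular as $\xi\to0$, since the two spout strands pinch to distance proportional to $M\xi$, but the singular parts cancel between the two factors: by the fundamental theorem of calculus the governing $\dlog$-integral telescopes, the $\log(M\xi)$ contributed by the pinch-down inside $SZ(T^{M\xi}_a)$ cancelling the $\log(M\xi)$ contributed by the reopen-up inside $SZ(T^{M}_{M\xi})$, and, using that an isolated chord is central, this is precisely the statement that the singular factor of $SZ(T^{M}_{M\xi})$ is $\xi^{\Omega/2\pi i}$, which is how the correction of \eqref{limmuM} reappears.

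It then remains to see that, once the singular parts have cancelled, what is left is continuous in $\xi$ down to $\xi=0$; this is where the passive strands must be controlled, and it is the genuinely new point beyond Le--Murakami. Here one uses that every chord joining a spout strand to a passive strand integrates a $\dlog$ of a distance that stays bounded away from zero throughout the pinch, so it contributes a quantity continuous in $\xi$ at $\xi=0$ and creates no new logarithmic divergence; the same holds for all mixed and higher-degree contributions to $SZ(T^{M}_{M\xi})$ and $SZ(T^{M\xi}_a)$. Hence the limit exists, and the cap applied to it equals both the left-hand side of the proposition (by \eqref{limmuM}) and $\lim_{\epsilon\to0}$ of the cap applied to $SZ$ of the pinched spout of neck $\epsilon$ (by the multiplicativity displayed above), which is the asserted identity.

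I would close with the consistency check: for $M=1$ and with no passive strands, the stacked tangle $T^{1}_{\xi}\times T^{\xi}_a$ is Le--Murakami's $1$-resolved spout and the identity collapses to Definition \ref{DefNormZf} and the equivalent reformulations of it recalled above, so the statement genuinely extends the single-strand normalization to arbitrarily many strands --- the point flagged in the remark preceding it. The main obstacle is exactly the analysis of the third paragraph: establishing that adjoining arbitrarily many passive strands introduces no new logarithmic divergence and does not disturb the role of the single correction factor $\xi^{\Omega/2\pi i}$, which is the gap in Le--Murakami's treatment that the passage to several strands must fill; everything else is multiplicativity of the Kontsevich integral and the unfolding of \eqref{limmuM}.
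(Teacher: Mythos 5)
The paper does not actually prove this Proposition here --- it is recalled from \cite{RG}, with nothing beyond the remark preceding it --- so there is no in-paper proof to measure you against; what follows is an assessment of your argument on its own terms. Your overall strategy (multiplicativity of $Z$ under stacking, cancellation of the $\log (M\xi)$ divergence between the pinch-down inside $SZ(T_a^{M\xi})$ and the reopen-up inside $SZ(T^M_{M\xi})$, and continuity in $\xi$ of everything involving the passive strands) is the natural one, and it does establish that the limit on the right-hand side exists.

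The gap is in your second paragraph, where you identify $SZ(T^{M}_{M\xi})$ with the correction $\xi^{\Omega/2\pi i}\times T^{M}_{M\xi}$ appearing in \eqref{limmuM}. That identification is exact only when the funnel carries no passive strands. With passive strands present, $SZ(T^{M}_{M\xi})$ also contains chords joining a spout strand to a passive strand, and over the funnel such a chord integrates to $\pm\frac{1}{2\pi i}\log\bigl((z_{\text{top}}-q)/(z_{\text{bot}}-q)\bigr)$, which is convergent as $\xi\to 0$ (as you correctly note in your third paragraph) but generically \emph{nonzero}, since each spout strand moves horizontally by roughly $M/2$ as the neck reopens. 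Hence $\lim_{\xi\to 0} SZ(T^{M}_{M\xi})\times SZ(T_a^{M\xi})$ differs from $\lim_{\xi\to 0}\xi^{\Omega/2\pi i}\times T^{M}_{M\xi}\times SZ(T_a^{M\xi})$ by exactly these finite mixed terms, and the closing chain of equalities ("the cap applied to it equals both the left-hand side by \eqref{limmuM} and \dots") does not go through unless the multi-strand $Z_f[\text{M}]$ is \emph{defined} by the right-hand side of the Proposition --- which is in effect how the paper and \cite{RG} treat it, the statement being a definition-cum-computation rather than a theorem derived from the two-strand formula \eqref{limmuM}. Relatedly, your appeal to "an isolated chord is central" fails once there are more than two strands: $\Omega$ between the spout strands does not commute with a chord having one foot on a spout strand and one on a passive strand, so even the factorization of $SZ(T^{M}_{M\xi})$ into $\xi^{\Omega/2\pi i}$ times a regular part needs a degree-by-degree estimate or a horizontal-deformation/long-chords argument rather than centrality.
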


This led us to using the notation:
\beq
Z_f[\text{\small M}](T)=
\setlength{\unitlength}{0.2cm}
\begin{picture}(5,3)(-1,0)
\thicklines
\put(2,0){\oval(4,4)[t]}
\end{picture}
 \times SZ(T^{\text{\tiny $\;M$-resolved}})
\eeq
with a similar statement for tangles with local minima. $Z_f[\text{\small M}]$ thus defined is multiplicative and invariant under horizontal deformations as defined in\cite{DBN}. For a link $L$ with $2n$ local extrema, we have:
\beq
Z_f[\text{\small M}](L)=
\setlength{\unitlength}{0.2cm}
\begin{picture}(5,3)(-1,0)
\thicklines
\put(2,0){\oval(4,4)[t]}
\end{picture}^n \times SZ(L^{\text{\tiny $\;M$-resolved}}) \times
\setlength{\unitlength}{0.2cm}
\begin{picture}(5,3)(-1,0)
\thicklines
\put(2,2){\oval(4,4)[b]}
\end{picture}^n
\eeq
Further, we have the important reality check:
\begin{pZfeqZ} (\cite{RG})
For $M>0$, $p$ the projection to the algebra of chord diagrams without isolated chords,
$pZ_f[\text{\small M}]=Z$.
\end{pZfeqZ}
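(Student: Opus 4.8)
The plan is to apply $p$ to the closed form of $Z_f[\text{\small M}]$ obtained above and watch it undo, piece by piece, everything that distinguishes the framed integral from $Z$. First I would record the purely algebraic properties of $p$. Since $\mathcal{A}$ is $\hatA$ modulo the framing independence relation, $p\colon\hatA\to\mathcal{A}$ is the degreewise quotient map that kills every basis diagram carrying an isolated chord; in particular it is continuous on the graded completions, it is a homomorphism for the composition product $\times$ (and for the $\otimes^{q}\mathcal{A}$-module structure on $\barA(\amalg^{q}S^{1})$), and it satisfies $p\circ S=\mathrm{id}_{\mathcal{A}}$ because $S$ is exactly the section of $p$ landing in the span of diagrams with no isolated chord. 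Consequently $p(\omega^{n})=0$ for all $n\ge 1$ --- the innermost of the $n$ parallel chords of $\omega^{n}$ is isolated --- so that $p\bigl(\mu^{\pm\omega/2\pi i}\bigr)=p\bigl(\exp(\pm\tfrac{\omega}{2\pi i}\log\mu)\bigr)=1$, and likewise $p$ annihilates the $\Omega$-type renormalizers occurring in \eqref{limmuM} once the corresponding spout has been closed off by its cap, the relevant chord then bounding a small free arc and hence being isolated.

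With this in hand I would compute $p$ on the elementary pieces of a Morse link. On an elementary tangle with no critical point (a crossing or a stack of vertical strands) one has $Z_f[\text{\small M}]=SZ$ by construction, so $pZ_f[\text{\small M}]=pSZ=Z$ there. On a local maximum $T_{a}$, using the closed form $Z_f[\text{\small M}](T_{a})=\kap\times SZ(T_{a}^{\text{\tiny $\;M$-resolved}})$ (equivalently the limit expression in \eqref{limmuM}), the listed properties of $p$ give
\[
pZ_f[\text{\small M}](T_{a})=\kap\times Z(T_{a}^{\text{\tiny $\;M$-resolved}})=Z\bigl(\kap\circ T_{a}^{\text{\tiny $\;M$-resolved}}\bigr)=Z(T_{a}),
\]
where the middle step is the multiplicativity of the unframed integral under tangle composition and the last step is the observation that $\kap\circ T_{a}^{\text{\tiny $\;M$-resolved}}$ is obtained from $T_{a}$ by opening its maximum into a narrow spout and then closing it again, a deformation that only ever contributes isolated chords to $Z$ and hence, once framing independence is reinstated, leaves $Z$ unchanged. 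A local minimum is handled identically after flipping the picture upside down.

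Finally I would assemble the global statement: writing a Morse link $L$ with $2n$ extrema as a composition of elementary tangles and using the multiplicativity of $Z_f[\text{\small M}]$ recorded above, the multiplicativity of $Z$, and the multiplicativity of $p$,
\[
pZ_f[\text{\small M}](L)=\prod_{\text{pieces}}pZ_f[\text{\small M}](\text{piece})=\prod_{\text{pieces}}Z(\text{piece})=Z(L);
\]
equivalently one applies $p$ directly to $Z_f[\text{\small M}](L)=\kap^{\,n}\times SZ(L^{\text{\tiny $\;M$-resolved}})\times\kup^{\,n}$ and uses that $\kap^{\,n}\circ L^{\text{\tiny $\;M$-resolved}}\circ\kup^{\,n}$ is Morse-isotopic to $L$. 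The step I expect to demand genuine (if short) geometric input is the last one above: that the $M$-resolution is \emph{invisible} to the unframed integral, i.e.\ that reassembling $Z(L^{\text{\tiny $\;M$-resolved}})$ with bare caps and cups returns $Z(L)$. This reduces to the vanishing of isolated chords under framing independence together with the invariance of $Z$ under horizontal and Morse-structure-preserving deformations; everything else --- the interaction of $p$ with $S$, with the exponential renormalizers, with the $\mu\to 0$ and $\xi\to 0$ limits (which $p$ commutes with because it acts degree by degree and each graded piece is eventually constant in the limiting parameter), and with the composition product --- is bookkeeping.
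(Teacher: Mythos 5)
The paper does not actually prove this statement here: it is recalled verbatim from \cite{RG}, so there is no in-text argument to compare yours against. Judged on its own terms, your plan is the natural one and I see no gap in it. The three pillars are all correct: (i) $p$ is a continuous algebra map with $p\circ S=\mathrm{id}_{\mathcal{A}}$, so away from critical points $pZ_f[\text{\small M}]=pSZ=Z$; (ii) $p(\omega^n)=0$ for $n\ge 1$ because the innermost chord of $\omega^n$ is isolated, so every renormalizer $\mu^{\pm\omega/2\pi i}$ (and the capped-off $\xi^{\Omega/2\pi i}$ of \eqref{limmuM}) projects to $1$; (iii) the residual identity $Z(\kap\circ T_a^{\text{\tiny $\;M$-resolved}})=Z(T_a)$ in $\overline{\mathcal{A}}$ is exactly the invariance of the unframed integral, modulo framing independence, under deformations preserving the critical-point structure --- you correctly isolate this as the only step with geometric content, and it is precisely where the framing independence relation is indispensable, since without it $Z$ is not invariant under changing the width of a maximum (which is why the counterterm exists in the first place). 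Two small polish points: the phrase ``each graded piece is eventually constant in the limiting parameter'' is not quite right (the graded pieces converge but are not eventually constant; what you actually need is that $p$ is linear and continuous degree by degree, or better, simply apply $p$ to the post-limit closed form $\kap\times SZ(T_a^{\text{\tiny $\;M$-resolved}})$ as you also do); and the global assembly $\kap^{\,n}\times SZ(L^{\text{\tiny $\;M$-resolved}})\times\kup^{\,n}$ is schematic --- the caps and cups are reinserted at $2n$ distinct heights, so the multiplicativity argument should be run slice by slice rather than as a single top/bottom composition, which changes nothing of substance.
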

\begin{ZfindofM} (\cite{RG})
For a link $L$, $Z_f[\text{\small M}](L)$ and $\hat{Z}_f[\text{\small M}](L)$ are independent of $M>0$ and are denoted $Z_f(L)$ and $\hat{Z}_f(L)$ respectively.
\end{ZfindofM}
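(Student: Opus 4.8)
The plan is to reduce the claim, in two steps, to a short computation with the Kontsevich integral of a critical-point-free two-strand tangle.

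\emph{Reduction to a single local extremum.} Since $\hat{Z}_f[M](L)$ is obtained from $Z_f[M](L)$ by multiplying the $i$-th component by the fixed series $\nu^{m_i}=Z(U)^{-m_i}$, which does not involve $M$, it suffices to prove the statement for $Z_f[M]$. Now $Z_f[M]$ is multiplicative and invariant under horizontal deformations (the discussion following Proposition~\ref{Zfmultitop}), and any Morse link is a product of juxtaposed elementary tangles of three kinds: critical-point-free braiding tangles, single local maxima, and single local minima. On a critical-point-free tangle, $Z_f[M]$ is just $SZ$ of that tangle, with no reference to $M$; hence it is enough to show that $Z_f[M](T_a)$ and $Z_f[M](T^a)$ do not depend on $M$, allowing also for the non-interacting auxiliary strands of Proposition~\ref{Zfmultitop}. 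One may also note, as a consistency check, that $Z_f[M](L)-Z_f[M'](L)\in\ker p$ by Theorem~\ref{pZfeqZ}, so that the whole issue concerns the isolated-chord part of $Z_f$, which is concentrated at the extrema.

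\emph{The single-extremum case by rescaling the limit parameter.} I treat a local maximum; a minimum is the mirror image. Use the closed form \eqref{limmuM},
\[
Z_f[M](T_a)=\bigl(\text{cap term}\bigr)\times\lim_{\xi\to 0}\ \xi^{\,\Omega/2\pi i}\times T^{M}_{M\xi}\times SZ\bigl(T_a^{M\xi}\bigr),
\]
in which the cap term and the prefactor $\xi^{\,\Omega/2\pi i}$ contain no $M$, so that $M$ enters only through the transition piece $T^{M}_{M\xi}$ and through the probe width $M\xi$ in the resolved maximum $T_a^{M\xi}$. Given $M,M'>0$, put $\eta=M\xi$ (hence $\xi=\eta/M$); the $M$-definition becomes $\lim_{\eta\to0}(\eta/M)^{\Omega/2\pi i}\times T^{M}_{\eta}\times SZ(T_a^{\eta})$ and the $M'$-definition $\lim_{\eta\to0}(\eta/M')^{\Omega/2\pi i}\times T^{M'}_{\eta}\times SZ(T_a^{\eta})$, which share the \emph{same} net $SZ(T_a^{\eta})$. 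They therefore coincide once one checks that $(\eta/M)^{\Omega/2\pi i}\times T^{M}_{\eta}$ is independent of $M$. This is the promised computation: $T^{M}_{\eta}$ is, up to horizontal deformation, a two-strand tangle with no Morse critical point whose strand separation runs between $\eta$ and $M$, and for such a tangle $Z$ is the holonomy $\exp\!\bigl(\tfrac{1}{2\pi i}\,[\log(z-z')]\,\Omega\bigr)$, depending only on the ratio $M/\eta$; the factor $(\eta/M)^{\Omega/2\pi i}$ is calibrated to cancel exactly this $M$-dependence, so that $(\eta/M)^{\Omega/2\pi i}\times T^{M}_{\eta}$ reduces to an $M$-free expression in $\eta$ alone. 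Equivalently, in $Z_f[M](L)=(\text{caps})\times SZ(L^{M\text{-resolved}})\times(\text{cups})$ the $M$-dependent factor contributed near each resolved spout is precisely the one removed by the renormalization built into the definition of $L^{M\text{-resolved}}$. Hence $Z_f[M](T_a)=Z_f[1](T_a)$, and similarly for $T^a$.

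\emph{Conclusion and the expected obstacle.} With $Z_f[M]$ now agreeing with $Z_f[1]$ on every elementary tangle, multiplicativity gives $Z_f[M](L)=Z_f[1](L)=:Z_f(L)$ for all links, and reinstating the $\nu$-corrections produces $\hat{Z}_f(L)$; both are therefore independent of $M$. The point I expect to need real care is not the formal substitution but the interchange of limits in the presence of the auxiliary strands of Proposition~\ref{Zfmultitop}: one must verify that the chords joining the resolving strand-pair to the remaining strands contribute only through terms that vanish as the neck width $\epsilon\to0$ or else reorganize into the same $M$-independent factor, and that the iterated limit ($\epsilon\to0$ then $\eta\to0$) may legitimately be replaced by the single limit of \eqref{limmuM}. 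This is where the vertical-strip / non-interacting-strands formalism and the convergence estimates of \cite{RG} do the real work; the rest is bookkeeping.
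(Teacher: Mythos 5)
First, a point of reference: the present paper does not prove this proposition; it is quoted from \cite{RG}, so your argument can only be measured against the definitions recorded here. Against those, the central step of your proof does not go through. You reduce, via multiplicativity, to showing that $Z_f[\text{\small M}]$ of a single local extremum is $M$-independent, and you argue this by claiming that $(\eta/M)^{\Omega/2\pi i}\times T^{M}_{\eta}$ is $M$-free because the prefactor cancels the two-strand holonomy of the transition piece. If that product were trivial, then \eqref{limmuM} at $M=1$ would read $Z_f(T_a)=(\text{cap})\times\lim_{\eta\to 0}SZ(T_a^{\eta})$, which contradicts Definition \ref{DefNormZf}: there the prefactor $\mu^{\omega/2\pi i}$ is precisely what absorbs the divergence of $SZ(T_a^{\mu})$ as $\mu\to 0$ (the paper's own statement that the limit equals the value at $\mu=1$ forces $SZ(T_a^{\mu})\propto\mu^{-\omega/2\pi i}$), and it is not there to cancel the holonomy of $T^{M}_{\mu}$. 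Carrying the renormalization through consistently, a single resolved maximum retains a residual factor on the order of $M^{-\omega/2\pi i}$ of isolated chords, and a minimum the inverse factor; so $Z_f[\text{\small M}]$ of an elementary extremum tangle is \emph{not} independent of $M$. This is exactly why the proposition is stated for links rather than tangles: on a closed component the numbers of maxima and minima coincide, the isolated chord is central in $\hat{\mathcal{A}}(S^1)$ so these factors can be collected along the component, and the $M^{\mp\omega/2\pi i}$ contributions cancel in pairs. Your strategy, were it valid, would prove the stronger statement for arbitrary tangles, which is false.

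Second, the issue you defer to ``convergence estimates'' --- chords joining the resolving strand pair to the auxiliary strands of Proposition \ref{Zfmultitop} --- is not bookkeeping; it is most of the content. The widening piece between the neck of width $\epsilon$ and the mouth of width $M$ does not collapse as $\epsilon\to 0$, so long chords landing on it neither vanish nor cancel between its two oppositely oriented strands, and their contributions carry $M$-dependence that is not a power of $\Omega$ and hence is invisible to your holonomy computation. Your own consistency check is actually the right starting point: since $pZ_f[\text{\small M}]=Z$, the difference $Z_f[\text{\small M}](L)-Z_f[\text{\small M}'](L)$ lives entirely in the isolated-chord sector, and that sector is controlled globally by counting extrema per closed component --- not by a local cancellation at each extremum, which is where your argument places it.
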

Let $L$ be a $q$-components framed oriented link in the blackboard framing represented by a link diagram $\mathcal{D}$. Recall that we consider Morse knots and links; we consider $\mathbb{R}^3$ as $\mathbb{C}\times \mathbb{R}$ and we can arrange that our knots live in $\mathbb{C}\times I$. Let $m_i$ be the number of maximal points of the $i$-th component of $\mathcal{D}$ with respect to $t$. Then we define as in ~\cite{LM5}:
\beq
\hat{Z}_f(L)=Z_f(\mathcal{D})\cdot(\nu^{m_1}\otimes \cdots \otimes \nu^{m_q}) \in \overline{\hatA}(\coprod^q S^1) \label{cumb}
\eeq
where $\nu=Z_f(U)^{-1}$ and:
\beq
U=
\setlength{\unitlength}{0.3cm}
\begin{picture}(5,4)(-1,1)
\thicklines
\put(1,3){\oval(2,2)[t]}
\put(3,3){\oval(2,2)[b]}
\put(5,3){\oval(2,2)[t]}
\put(3,3){\oval(6,5)[b]}
\end{picture}
\eeq
\\
Further, since we regard $\hat{\mathcal{A}}(\amalg^q S^1)$ as an $\otimes^q \hat{\mathcal{A}}$-module, each $\nu^{m_i}$ acts only on the $i$-th component, and it does so by connected sum ~\cite{LM2}. Strictly speaking, we should write:
\beq
\nu^{m_1} \otimes \cdots \otimes \nu^{m_q} \centerdot = (\nu^{m_1} \centerdot)\otimes \cdots \otimes (\nu^{m_q} \centerdot)
\eeq

It is more economical to define $\hat{Z}_f$ as being $Z_f$ with the provision that in the expression for $Z_f(L)$, $\nu$ acts on each local max of each link component. We have that $\hat{Z}_f$ is multiplicative and is an isotopy invariant (\cite{RG}).\\

Though as defined $\hat{Z}_f$ is already an isotopy invariant, we will use a modified version of $\hat{Z}_f$ that is exceptionally well-behaved under handle slide; we will define the modified $\widetilde{Z}_f$ to be $\hat{Z}_f$ with the provision that in the expression for $\hat{Z}_f(L)$, each link component is multiplied by $\nu^{-1}$. It is worth pointing out that our definition for $\hat{Z}_f$ does not yield the same results as Le and Murakami. Indeed, $\hat{Z}_f$ is a normalized version of $Z_f$ which we have defined in such a manner that it enables us to exactly recover the Kontsevich integral. In \cite{LM1} Le and Murakami define $Z_f$ as a framed version of a truncation of the original Kontsevich integral.\\

The doubling map $\Delta$ on strands defined by:
\beq
\Delta:
\setlength{\unitlength}{0.3cm}
\begin{picture}(2,4)(-1,2)
\thicklines
\put(0,0){\vector(0,1){4}}
\put(0,0){\line(0,1){5}}
\end{picture} \mapsto
\setlength{\unitlength}{0.3cm}
\begin{picture}(4,4)(-1,2)
\thicklines
\put(0,0){\vector(0,1){4}}
\put(0,0){\line(0,1){5}}
\put(1,0){\vector(0,1){4}}
\put(1,0){\line(0,1){5}}
\end{picture}
\eeq
\\
induces a map (\cite{LM4}) $\Delta : \mathcal{A}(I) \rightarrow \mathcal{A}(I^2)$ on chord diagrams that is defined as follows on one chord:
\beq
\Delta:
\setlength{\unitlength}{0.3cm}
\begin{picture}(5,4)(-1,2)
\thicklines
\put(0,0){\vector(0,1){4}}
\put(0,0){\line(0,1){5}}
\multiput(0,2)(0.3,0){10}{\line(1,0){0.15}}
\end{picture} \mapsto
\setlength{\unitlength}{0.3cm}
\begin{picture}(5,4)(-1,2)
\thicklines
\put(0,0){\vector(0,1){4}}
\put(0,0){\line(0,1){5}}
\multiput(0,2)(0.3,0){10}{\line(1,0){0.15}}
\put(1,0){\vector(0,1){4}}
\put(1,0){\line(0,1){5}}
\end{picture} +
\setlength{\unitlength}{0.3cm}
\begin{picture}(4,4)(-1,2)
\thicklines
\put(0,0){\vector(0,1){4}}
\put(0,0){\line(0,1){5}}
\put(1,0){\vector(0,1){4}}
\put(1,0){\line(0,1){5}}
\multiput(1,2)(0.3,0){10}{\line(1,0){0.15}}
\end{picture}
\label{dbl}
\eeq
\\
and for chord diagrams of degree greater than 1, we impose that the following square be commutative and use induction:
\beq
\setlength{\unitlength}{0.3cm}
\begin{picture}(21,11)
\thicklines
\put(-4,8){$
\Big(
\setlength{\unitlength}{0.2cm}
\begin{picture}(5,4)(-1,2)
\thicklines
\put(0,0){\vector(0,1){4}}
\put(0,0){\line(0,1){5}}
\multiput(0,2)(0.3,0){10}{\line(1,0){0.15}}
\end{picture},
\setlength{\unitlength}{0.2cm}
\begin{picture}(5,4)(-1,2)
\thicklines
\put(0,0){\vector(0,1){4}}
\put(0,0){\line(0,1){5}}
\multiput(0,2)(0.3,0){10}{\line(1,0){0.15}}
\end{picture}
\Big)
$}
\put(6,8){\vector(1,0){4}}
\put(6,9){$\Delta \times \Delta$}
\put(11,8){$
\Big( \Delta
\setlength{\unitlength}{0.2cm}
\begin{picture}(5,4)(-1,2)
\thicklines
\put(0,0){\vector(0,1){4}}
\put(0,0){\line(0,1){5}}
\multiput(0,2)(0.3,0){10}{\line(1,0){0.15}}
\end{picture}, \Delta
\setlength{\unitlength}{0.2cm}
\begin{picture}(5,4)(-1,2)
\thicklines
\put(0,0){\vector(0,1){4}}
\put(0,0){\line(0,1){5}}
\multiput(0,2)(0.3,0){10}{\line(1,0){0.15}}
\end{picture}
\Big)
$}
\put(17,6){\vector(-1,-1){3}}
\put(16.5,4){$\times$}
\put(1,6){\vector(1,-2){1.6}}
\put(0,4){$\times$}
\put(2,0){$
\setlength{\unitlength}{0.2cm}
\begin{picture}(5,4)(-1,2)
\thicklines
\put(0,0){\vector(0,1){4}}
\put(0,0){\line(0,1){5}}
\multiput(0,1)(0.3,0){10}{\line(1,0){0.15}}
\multiput(0,2)(0.3,0){10}{\line(1,0){0.15}}
\end{picture}
$}
\put(6,0){\vector(1,0){4}}
\put(7,1){$\Delta$}
\put(11,0){$
\Delta
\setlength{\unitlength}{0.2cm}
\begin{picture}(5,4)(-1,2)
\thicklines
\put(0,0){\vector(0,1){4}}
\put(0,0){\line(0,1){5}}
\multiput(0,1)(0.3,0){10}{\line(1,0){0.15}}
\multiput(0,2)(0.3,0){10}{\line(1,0){0.15}}
\end{picture}
$}
\end{picture}
\eeq
\\
Finally, we have the following behavior of $\widetilde{Z}_f$ under band sum moves:
\begin{Goodbehavior}(\cite{RG})  \label{Goodbehavior}
Let $L$ be a framed oriented link. Suppose $K_1$ and $K_2$ are two link components of $L$, and $K_1$ is band summed over $K_2$, which we pictorially represent as:\\
\setlength{\unitlength}{0.5cm}
\begin{picture}(14,9)(-8,0)
\thicklines
\multiput(0,1)(0.2,0){5}{\line(1,0){0.08}}
\multiput(0,6)(0.2,0){5}{\line(1,0){0.08}}
\multiput(9,1)(0.2,0){5}{\line(1,0){0.08}}
\multiput(9,6)(0.2,0){5}{\line(1,0){0.08}}
\put(1,3.5){\oval(2,5)[r]}
\put(2,4){\vector(0,-1){1}}
\put(1,0){$K_1$}
\put(4.5,3.5){\oval(1,4)}
\put(4,3){\vector(0,1){1}}
\put(4,0){$K_2$}
\put(6.5,3.5){$\mapsto$}
\put(6,2.8){$\tiny{band}$}
\put(6,2.1){$\tiny{sum}$}
\multiput(10,1)(0,5){2}{\line(1,0){1}}
\put(12,2){\line(0,1){3}}
\put(11,2){\oval(2,2)[br]}
\put(11,5){\oval(2,2)[tr]}
\put(9,0){$K'_1 \coprod K_2$}
\put(10,6){\vector(1,0){0.5}}
\put(11,3.5){\oval(1,4)}
\put(10.5,3){\vector(0,1){0.5}}
\end{picture}\\ \\
where $K'_1$ is the result of doing a band sum move of $K_1$ over $K_2$, and we denote by $L'$ the link obtained from $L$ after such an operation. In the above picture, we have only displayed $K_1$ and $K_2$, and not other components that may be linked to either or both components. If we write:
\beq
\widetilde{Z}_f(L)=\sum_{\substack{chord \\ diagrams \;X}}c_X X
\eeq
for coefficients $c_X$, then we have:
\beq
\widetilde{Z}_f(L')=\sum_{\substack{chord \\ diagrams \;X}}c_X X' \label{hsformula}
\eeq
where $X$ and its corresponding chord diagram $X'$ after the band sum move are given below:\\
\setlength{\unitlength}{0.5cm}
\begin{picture}(14,9)(-6,0)
\thicklines
\multiput(0,1)(0.2,0){5}{\line(1,0){0.08}}
\multiput(0,6)(0.2,0){5}{\line(1,0){0.08}}
\multiput(11,1)(0.2,0){5}{\line(1,0){0.08}}
\multiput(11,6)(0.2,0){5}{\line(1,0){0.08}}
\put(1,3.5){\oval(2,5)[r]}
\put(2,4){\vector(0,-1){1}}
\put(4.5,3.5){\oval(1,4)}
\multiput(5,5)(0.2,0){10}{\circle*{0.1}}
\multiput(5,4.8)(0.2,0){10}{\circle*{0.1}}
\multiput(5,3)(0.2,0){10}{\circle*{0.1}}
\multiput(5.5,3.5)(0,0.4){3}{\circle*{0.15}}
\put(4,3){\vector(0,1){1}}
\put(4,0){$X$}
\put(8.5,3.5){$\mapsto$}
\put(8,2.8){$\tiny{band}$}
\put(8,2.1){$\tiny{sum}$}
\multiput(12,1)(0,5){2}{\line(1,0){1}}
\put(14.2,2){\line(0,1){3}}
\put(13.3,2){\line(0,1){3}}
\put(13,2){\oval(2,2)[br]}
\put(13,5){\oval(2,2)[tr]}
\put(11,0){$X'$}
\put(12,6){\vector(1,0){0.5}}
\put(13,3.5){\oval(1,4)[l]}
\put(13,5){\oval(1,1)[tr]}
\put(13,2){\oval(1,1)[br]}
\multiput(13.3,2)(0,3){2}{\line(1,0){0.9}}
\put(12.5,3){\vector(0,1){0.5}}
\multiput(14.2,4.7)(0.2,0){10}{\circle*{0.1}}
\multiput(14.2,4.5)(0.2,0){10}{\circle*{0.1}}
\multiput(14.2,2.7)(0.2,0){10}{\circle*{0.1}}
\multiput(14.7,3.2)(0,0.4){3}{\circle*{0.15}}
\put(13.4,3.2){$\Delta$}
\end{picture}\\ \\
To be specific, the map $\Delta$ doubles strands, and the chords on those strands as well. Since we operate a band sum move here, the $\Delta$ enclosed in the box means by abuse of notation (\cite{LM2}, \cite{LM5}, \cite{LM6}) that the doubling of strands coming with the band sum move proper has been performed and the only thing left to be done is to double chords accordingly.
\end{Goodbehavior}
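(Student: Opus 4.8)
The strategy is to factor the band sum move into elementary operations whose effect on $\widetilde{Z}_f$ is already understood, and then carry out the chord-diagram bookkeeping. I would realize the passage from $L$ to $L'$ as a composition of three moves: (i) replace the component $K_2$ by the two-component link $K_2 \amalg K_2^{\parallel}$, where $K_2^{\parallel}$ is the framed parallel pushoff of $K_2$ determined by its blackboard framing, equivalently apply the strand-doubling operation to $K_2$; (ii) form the connected sum of $K_1$ with $K_2^{\parallel}$ along the band $b$, turning $K_1 \amalg K_2^{\parallel}$ into $K'_1$; (iii) isotope the result, together with the band, into the standard position drawn in the statement. Since $\widetilde{Z}_f$ is an isotopy invariant through $\hat{Z}_f$ (\cite{RG}), step (iii) costs nothing, and the theorem amounts to the combined effect of (i) and (ii).

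For step (i) I would invoke the compatibility of the framed Kontsevich integral with cabling: the map $\Delta$ of \eqref{dbl} is precisely the chord-diagram shadow of doubling a strand, so $Z_f$ of a diagram with the $K_2$-component doubled is obtained by applying $\Delta$ to the strands of that component and summing over the two lifts of each chord foot landing on $K_2$, with the commutative square defining $\Delta$ in higher degrees guaranteeing consistency beyond degree one (the doubling lemmas of \cite{LM4}, \cite{LM2}, \cite{LM5}). The delicate point here is the framing: since we are in the framed theory, $\Delta$ applied to $Z_f(K_2)$ a priori computes the integral of the blackboard-framed double, so one must check that the pushoff $K_2^{\parallel}$ required by the band sum carries precisely that framing, which it does by our blackboard-framing convention on $\mathcal{D}$.

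The main obstacle, and the reason the modified invariant $\widetilde{Z}_f$ rather than $\hat{Z}_f$ or the bare $Z_f$ is the right object, is controlling the normalization factors through step (i). Doubling $K_2$ would naively attach $\nu^{m_2}$ to each of the two resulting components, an extra copy of which is spurious, and $\Delta$ does not send $\nu$ to $\nu \otimes \nu$ on the nose; the discrepancy between $\Delta(\nu^{-1})$ and $\nu^{-1} \otimes \nu^{-1}$ is a group-like element localized near the band. The resolution is that this discrepancy is exactly absorbed by the extra $\nu^{-1}$ that $\widetilde{Z}_f$ carries on \emph{each} link component, including on $K_1$, whose own normalization changes once the band is attached. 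Concretely I would first establish the analogue of the theorem for the intermediate normalization $\check{Z}_f$, where each component carries a single $\nu$ and the cancellation is cleanest, and then transfer the statement to $\widetilde{Z}_f$; this is the route of \cite{RG}. Making this cancellation precise, that is, isolating the near-band correction term and verifying it is killed, is the heart of the argument.

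For step (ii) I would use that $\widetilde{Z}_f$ is multiplicative and that connected sum of two link components is implemented on chord diagrams by the module action gluing the diagrams strand-wise, so attaching the band simply concatenates the $K_1$-part of each chord diagram with the doubled $K_2$-part produced in step (i). Reassembling: each chord diagram $X$ occurring in $\widetilde{Z}_f(L)$ with coefficient $c_X$ is sent to the diagram $X'$ obtained by doubling the $K_2$-strand of $X$ and routing the double along the band, with the \emph{same} coefficient $c_X$, which is the formula \eqref{hsformula}; invariance of the coefficients is precisely the statement that all normalization corrections have cancelled in step (i). What remains is pure bookkeeping: checking that ``doubling strands and chords in the sense of $\Delta$'' matches the pictorial $X \mapsto X'$ in the statement, which is immediate from \eqref{dbl} and the inductive definition of $\Delta$.
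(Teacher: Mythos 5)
Note first that this paper does not actually prove Theorem \ref{Goodbehavior}: it is recalled from \cite{RG} (the introduction explicitly declines to repeat that work), so there is no in-paper proof to compare against. Your outline --- realize the band sum as doubling $K_2$ along its framing followed by a connected sum of $K_1$ with the pushoff, handle the first step by the Le--Murakami parallel/cabling formula for $\Delta$ and the second by multiplicativity under connected sum, and track the $\nu$-normalizations throughout --- is exactly the LMO-style route that \cite{RG} (following \cite{LM2}, \cite{LM5}, \cite{LM6}) takes, and the points you isolate as delicate (the framing of the pushoff, the failure of $\Delta(\nu)=\nu\otimes\nu$, the role of the extra $\nu^{-1}$ per component that distinguishes $\widetilde{Z}_f$ from $\hat{Z}_f$) are the right ones. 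The one reservation is that, as you yourself flag, the proposal asserts rather than verifies the central cancellation: that the defect of $\Delta$ against the normalization factors is exactly absorbed when the cabling-friendly ($-m$) and connected-sum-friendly ($1-m$) normalizations are reconciled by $\widetilde{Z}_f$, via the intermediate $\check{Z}_f$. As written this is a correct strategy with the decisive computation deferred; it would need to be carried out, as in \cite{RG}, before the argument is complete.
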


\section{The book notation}
\subsection{Linking matrix and degree 1 Kontsevich integral}
Yetter ~\cite{Y2} observed a long time ago that the degree 1 part of the Kontsevich integral behaves like the linking matrix under band sum move. He sees that as a motivation for introducing the book notation that we will cover in the next subsection. For now we show his claim in the simple case of a two components link $L=K_i \cup K_j$ where the two knots $K_i$ and $K_j$ are trivial and unlinked for the simplicity of exposition. We take the following basis for $\mathcal{A}(S^1 \amalg S^1)$:
\begin{align}
&\frac{1}{2}
\setlength{\unitlength}{0.4cm}
\begin{picture}(7,3)(-1,0.8)
\put(1,1){\oval(2,2)}
\put(4,1){\oval(2,2)}
\multiput(0,1)(0.4,0){5}{\line(1,0){0.2}}
\end{picture} \\
&\frac{1}{2}
\setlength{\unitlength}{0.4cm}
\begin{picture}(7,3)(-1,0.8)
\put(1,1){\oval(2,2)}
\put(4,1){\oval(2,2)}
\multiput(3,1)(0.4,0){5}{\line(1,0){0.2}}
\end{picture} \\
&\setlength{\unitlength}{0.4cm}
\begin{picture}(7,3)(-1,0.8)
\put(1,1){\oval(2,2)}
\put(4,1){\oval(2,2)}
\multiput(2,1)(0.3,0){4}{\line(1,0){0.15}}
\end{picture}
\end{align}
We write:
\beq
Z_1(L)=c_i \Bigg(
\frac{1}{2}
\setlength{\unitlength}{0.4cm}
\begin{picture}(7,3)(-1,0.8)
\put(1,1){\oval(2,2)}
\put(4,1){\oval(2,2)}
\multiput(0,1)(0.4,0){5}{\line(1,0){0.2}}
\end{picture}
\Bigg) + c_j \Bigg(
\frac{1}{2}
\setlength{\unitlength}{0.4cm}
\begin{picture}(7,3)(-1,0.8)
\put(1,1){\oval(2,2)}
\put(4,1){\oval(2,2)}
\multiput(3,1)(0.4,0){5}{\line(1,0){0.2}}
\end{picture}
\Bigg) + c_{ij} \Bigg(
\setlength{\unitlength}{0.4cm}
\begin{picture}(7,3)(-1,0.8)
\put(1,1){\oval(2,2)}
\put(4,1){\oval(2,2)}
\multiput(2,1)(0.3,0){4}{\line(1,0){0.15}}
\end{picture}
\Bigg)
\eeq
After the band sum move of $K_i$ over $K_j$, the resulting link is denoted by $L'$ and we have:
\begin{align}
Z_1(L')&=c_i \Bigg(
\frac{1}{2}
\setlength{\unitlength}{0.4cm}
\begin{picture}(7,3)(-1,0.8)
\put(1,1){\oval(2,2)}
\put(4,1){\oval(2,2)}
\multiput(0,1)(0.4,0){5}{\line(1,0){0.2}}
\end{picture}
\Bigg)' + c_j \Bigg(
\frac{1}{2}
\setlength{\unitlength}{0.4cm}
\begin{picture}(7,3)(-1,0.8)
\put(1,1){\oval(2,2)}
\put(4,1){\oval(2,2)}
\multiput(3,1)(0.4,0){5}{\line(1,0){0.2}}
\end{picture}
\Bigg)' + c_{ij} \Bigg(
\setlength{\unitlength}{0.4cm}
\begin{picture}(7,3)(-1,0.8)
\put(1,1){\oval(2,2)}
\put(4,1){\oval(2,2)}
\multiput(2,1)(0.3,0){4}{\line(1,0){0.15}}
\end{picture}
\Bigg)'\\
&=c_i \Bigg(
\frac{1}{2}
\setlength{\unitlength}{0.4cm}
\begin{picture}(7,3)(-1,0.8)
\put(1,1){\oval(2,2)}
\put(4,1){\oval(2,2)}
\multiput(0,1)(0.4,0){5}{\line(1,0){0.2}}
\end{picture}
\Bigg)\nonumber \\
& + c_j \Bigg(
\frac{1}{2}
\setlength{\unitlength}{0.4cm}
\begin{picture}(7,3)(-1,0.8)
\put(1,1){\oval(2,2)}
\put(4,1){\oval(2,2)}
\multiput(0,1)(0.4,0){5}{\line(1,0){0.2}}
\end{picture}+
\frac{1}{2}
\setlength{\unitlength}{0.4cm}
\begin{picture}(7,3)(-1,0.8)
\put(1,1){\oval(2,2)}
\put(4,1){\oval(2,2)}
\multiput(3,1)(0.4,0){5}{\line(1,0){0.2}}
\end{picture} \pm
\setlength{\unitlength}{0.4cm}
\begin{picture}(7,3)(-1,0.8)
\put(1,1){\oval(2,2)}
\put(4,1){\oval(2,2)}
\multiput(2,1)(0.3,0){4}{\line(1,0){0.15}}
\end{picture}
\Bigg)\nonumber\\
& + c_{ij} \Bigg(\pm
\setlength{\unitlength}{0.4cm}
\begin{picture}(7,3)(-1,0.8)
\put(1,1){\oval(2,2)}
\put(4,1){\oval(2,2)}
\multiput(0,1)(0.4,0){5}{\line(1,0){0.2}}
\end{picture}+
\setlength{\unitlength}{0.4cm}
\begin{picture}(7,3)(-1,0.8)
\put(1,1){\oval(2,2)}
\put(4,1){\oval(2,2)}
\multiput(2,1)(0.3,0){4}{\line(1,0){0.15}}
\end{picture}
\Bigg)\\
&=[c_i+c_j \pm 2c_{ij}] \Bigg(
\frac{1}{2}
\setlength{\unitlength}{0.4cm}
\begin{picture}(7,3)(-1,0.8)
\put(1,1){\oval(2,2)}
\put(4,1){\oval(2,2)}
\multiput(0,1)(0.4,0){5}{\line(1,0){0.2}}
\end{picture}
\Bigg) + c_j \Bigg(
\frac{1}{2}
\setlength{\unitlength}{0.4cm}
\begin{picture}(7,3)(-1,0.8)
\put(1,1){\oval(2,2)}
\put(4,1){\oval(2,2)}
\multiput(3,1)(0.4,0){5}{\line(1,0){0.2}}
\end{picture}
\Bigg) \nonumber\\
&+ [\pm c_j +c_{ij}] \Bigg(
\setlength{\unitlength}{0.4cm}
\begin{picture}(7,3)(-1,0.8)
\put(1,1){\oval(2,2)}
\put(4,1){\oval(2,2)}
\multiput(2,1)(0.3,0){4}{\line(1,0){0.15}}
\end{picture}
\Bigg)
\end{align}
Now observe that in the basis for $\mathcal{A}(S^1 \amalg S^1)$ we chose, the coefficients of $Z_1$ transform like the linking matrix entries under band sum move:
\begin{align}
L_i &\mapsto L_i '=L_i + L_j \pm 2 L_{ij} \\
L_j & \mapsto L_j'=L_j \\
L_{ij} & \mapsto L_{ij}'=L_{ij} \pm L_j
\end{align}
The matrix congruence that implements the band sum move on both the coefficients of $Z_1(L)$ and the linking matrix is given by:
\beq
M=\;
\setlength{\unitlength}{0.3cm}
\begin{picture}(8,4)(0,1)
\thicklines
\put(1,2){\oval(1,4)[l]}
\put(5,2){\oval(1,4)[r]}
\put(2,3){\text{\small $1$}}
\put(1,1){\text{\small $\pm 1$}}
\put(4,1){\text{\small $1$}}
\put(4,3){\text{\small $0$}}
\end{picture}
\eeq
We check that if:
\beq
A=\;
\setlength{\unitlength}{0.3cm}
\begin{picture}(8,4)(0,1)
\thicklines
\put(1,2){\oval(1,4)[l]}
\put(6,2){\oval(1,4)[r]}
\put(1,3){\text{\small $a_{ii}$}}
\put(1,1){\text{\small $a_{ji}$}}
\put(4,1){\text{\small $a_{jj}$}}
\put(4,3){\text{\small $a_{ij}$}}
\end{picture}
\eeq
stands for either of
\beq
\setlength{\unitlength}{0.3cm}
\begin{picture}(8,4)(0,1)
\thicklines
\put(1,2){\oval(1,4)[l]}
\put(6,2){\oval(1,4)[r]}
\put(1,3){\text{\small $L_i$}}
\put(1,1){\text{\small $L_{ji}$}}
\put(4,1){\text{\small $L_j$}}
\put(4,3){\text{\small $L_{ij}$}}
\end{picture} \text{or} \;\;\setlength{\unitlength}{0.3cm}
\begin{picture}(8,4)(0,1)
\thicklines
\put(1,2){\oval(1,4)[l]}
\put(6,2){\oval(1,4)[r]}
\put(1,3){\text{\small $c_i$}}
\put(1,1){\text{\small $c_{ji}$}}
\put(4,1){\text{\small $c_j$}}
\put(4,3){\text{\small $c_{ij}$}}
\end{picture}
\eeq
with $L_{ij}=L_{ji}$ and $c_{ij}=c_{ji}$, then:
\begin{align}
M^T AM &=\setlength{\unitlength}{0.3cm}
\begin{picture}(7,4)(0,1)
\thicklines
\put(1,2){\oval(1,4)[l]}
\put(6,2){\oval(1,4)[r]}
\put(2,3){\text{\small $1$}}
\put(2,1){\text{\small $0$}}
\put(4,1){\text{\small $1$}}
\put(4,3){\text{\small $\pm 1$}}
\end{picture}
\setlength{\unitlength}{0.3cm}
\begin{picture}(7,4)(0,1)
\thicklines
\put(1,2){\oval(1,4)[l]}
\put(6,2){\oval(1,4)[r]}
\put(1,3){\text{\small $a_{ii}$}}
\put(1,1){\text{\small $a_{ji}$}}
\put(4,1){\text{\small $a_{jj}$}}
\put(4,3){\text{\small $a_{ij}$}}
\end{picture}
\setlength{\unitlength}{0.3cm}
\begin{picture}(7,4)(0,1)
\thicklines
\put(1,2){\oval(1,4)[l]}
\put(6,2){\oval(1,4)[r]}
\put(1,3){\text{\small $1$}}
\put(1,1){\text{\small $\pm 1$}}
\put(4,1){\text{\small $1$}}
\put(4,3){\text{\small $0$}}
\end{picture} \nonumber \\
&=
\setlength{\unitlength}{0.3cm}
\begin{picture}(19,5)(0,1)
\thicklines
\put(1,2){\oval(1,4)[l]}
\put(16,2){\oval(1,4)[r]}
\put(1,3){\text{\small $a_{ii}+a_{jj} \pm 2a_{ij}$}}
\put(1,1){\text{\small $a_{ji} \pm a_{jj}$}}
\put(11,1){\text{\small $a_{jj}$}}
\put(11,3){\text{\small $a_{ij} \pm a_{jj}$}}
\end{picture}
\end{align}
In the following, we seek to generalize such a transformation.

\subsection{Books of matrices}
Note that in the previous section, as is customary whenever we compute the Kontsevich integral of tangles, the diagrams are tangle chord diagrams and not chord diagrams as elements of $\hatA(\coprod S^1)$ as initially defined by Kontsevich. Indeed, for a link $L$ sliced into $n$ horizontal strips in each of which we have a tangle $T_i$, $1 \leq i \leq n$ such that $L=T_1 \times \cdots \times T_n$, it is generally understood that when we compute $\hat{Z}_f(L)$ by using the multiplicativity property of the Kontsevich integral $\hat{Z}_f$:
\beq
\hat{Z}_f(L)=\hat{Z}_f(T_1) \times \cdots \times \hat{Z}_f(T_n)
\eeq
the resulting object $\hat{Z}_f(L)$ is a sum of tangle chord diagrams with coefficients in front of each diagram being obtained from the Kontsevich integral itself. By definition, $\widetilde{Z}_f$ is also multiplicative and $\widetilde{Z}_f(L)$ is also a sum of tangle chord diagrams with complex coefficients. The Kontsevich integral $\widetilde{Z}_f$ of a link $L$ can be written:
\beq
\widetilde{Z}_f(L)=\sum_{\text{chord diagr. X}} c_X X = \sum_{m \geq 0} \sum_{|X|=m} c_X X
\eeq
where by $|X|=m$ we mean that the tangle chord diagram $X$ has chord degree $m$, and we sum over all such tangle chord diagrams, for all $m \geq 0$. In what follows, we fix $m \geq 1$. We will be working with a $q$-components link $L=\coprod_{1 \leq l \leq q} K_l$.\\

\subsubsection{Vertical slicing of tangles}
Before slicing links, we have to fully determine where local extrema will be located on any given link. The definitions of $\hat{Z}_f$ and $\widetilde{Z}_f$ each introduce factors of $\nu =Z_f(U)^{-1}$ which will yield additional local extrema on the link $L$ upon being multiplied with $Z_f(L)$. Since those factors of $\nu$ are multiplying local maxima, it suffices to consider products of the form:
\beq
\nu \cdot Z_f[\text{\small Q}](
\setlength{\unitlength}{0.3cm}
\begin{picture}(6,3)(-1,0)
\thicklines
\put(2,0){\oval(4,4)[t]}
\put(3.5,0){\vector(-1,0){3}}
\put(0.5,0){\vector(1,0){3}}
\put(2,-1){$a$}
\end{picture})\nonumber
\eeq
for $Q>0$. Observe that if we write:
\beq
Z_f(
\setlength{\unitlength}{0.2cm}
\begin{picture}(11,4)(-1,2)
\thicklines
\put(1.5,3){\oval(1,2)[b]}
\put(4.5,3){\oval(5,2)[t]}
\put(4.5,3){\oval(7,4)[t]}
\put(7.5,3){\oval(1,2)[b]}
\end{picture})=\sum_{|P| \geq 0} c_P \cdot \Bigg(
\setlength{\unitlength}{0.2cm}
\begin{picture}(11,4)(-1,2)
\thicklines
\put(1.5,3){\oval(1,2)[b]}
\put(4.5,3){\oval(5,2)[t]}
\put(4.5,3){\oval(7,4)[t]}
\put(7.5,3){\oval(1,2)[b]}
\end{picture}\Bigg) _P
\eeq
then:
\beq
\nu=Z_f(
\setlength{\unitlength}{0.2cm}
\begin{picture}(11,4)(-1,2)
\thicklines
\put(1.5,3){\oval(1,2)[b]}
\put(4.5,3){\oval(5,2)[t]}
\put(4.5,3){\oval(7,4)[t]}
\put(7.5,3){\oval(1,2)[b]}
\end{picture})^{-1}=
\sum_{|P| \geq 0} d_P \cdot \Bigg(
\setlength{\unitlength}{0.2cm}
\begin{picture}(11,4)(-1,2)
\thicklines
\put(1.5,3){\oval(1,2)[b]}
\put(4.5,3){\oval(5,2)[t]}
\put(4.5,3){\oval(7,4)[t]}
\put(7.5,3){\oval(1,2)[b]}
\end{picture}\Bigg) _P
\eeq
with coefficients $d_P$ such that
\beq
\nu \cdot Z_f(
\setlength{\unitlength}{0.2cm}
\begin{picture}(11,4)(-1,2)
\thicklines
\put(1.5,3){\oval(1,2)[b]}
\put(4.5,3){\oval(5,2)[t]}
\put(4.5,3){\oval(7,4)[t]}
\put(7.5,3){\oval(1,2)[b]}
\end{picture})=\setlength{\unitlength}{0.2cm}
\begin{picture}(11,4)(-1,2)
\thicklines
\put(1.5,3){\oval(1,2)[b]}
\put(4.5,3){\oval(5,2)[t]}
\put(4.5,3){\oval(7,4)[t]}
\put(7.5,3){\oval(1,2)[b]}
\end{picture}
\eeq
It follows that:
\begin{align}
\nu \cdot Z_f[\text{\small Q}](
\setlength{\unitlength}{0.3cm}
\begin{picture}(6,3)(-1,0)
\thicklines
\put(2,0){\oval(4,4)[t]}
\put(3.5,0){\vector(-1,0){3}}
\put(0.5,0){\vector(1,0){3}}
\put(2,-1){$a$}
\end{picture})&= \nu \cdot
\setlength{\unitlength}{0.2cm}
\begin{picture}(5,3)(-1,0)
\thicklines \put(2,0){\oval(4,4)[t]}
\end{picture}
 \times SZ(
\setlength{\unitlength}{0.3cm}
\begin{picture}(5,4)(-1,0)
\thicklines
\qbezier(0,0)(1,2)(0,3)
\qbezier(3,0)(2,2)(3,3)
\put(2.5,0){\vector(-1,0){2}}
\put(0.5,0){\vector(1,0){2}}
\put(1,-1){$a$}
\put(2.5,3){\vector(-1,0){2}}
\put(0.5,3){\vector(1,0){2}}
\put(1,3.5){$Q$}
\end{picture})\\
&=\sum_{|P| \geq 0} d_P \cdot \Bigg(
\setlength{\unitlength}{0.2cm}
\begin{picture}(11,4)(-1,2)
\thicklines
\put(1.5,3){\oval(1,2)[b]}
\put(4.5,3){\oval(5,2)[t]}
\put(4.5,3){\oval(7,4)[t]}
\put(7.5,3){\oval(1,2)[b]}
\end{picture}\Bigg) _P \#
\setlength{\unitlength}{0.2cm}
\begin{picture}(5,3)(-1,0)
\thicklines \put(2,0){\oval(4,4)[t]}
\end{picture}
 \times \sum_{|R| \geq 0}e_R \cdot
\Bigg(
\setlength{\unitlength}{0.3cm}
\begin{picture}(5,4)(-1,1)
\thicklines
\qbezier(0,0)(1,2)(0,3)
\qbezier(3,0)(2,2)(3,3)
\put(2.5,0){\vector(-1,0){2}}
\put(0.5,0){\vector(1,0){2}}
\put(1,-1){$a$}
\put(2.5,3){\vector(-1,0){2}}
\put(0.5,3){\vector(1,0){2}}
\put(1,3.5){$Q$}
\end{picture} \Bigg)_R \nonumber \\
&=\sum_{|P|,|R| \geq 0} d_P e_R \cdot \Bigg(
\setlength{\unitlength}{0.3cm}
\begin{picture}(8,7)(-1,1)
\thicklines
\qbezier(1,0)(4,6)(1,4)
\qbezier(1,4)(0,5)(3,7)
\qbezier(3,7)(5,7)(5,5)
\qbezier(5,5)(5,0)(6,0)
\end{picture} \Bigg)_{(P,R)}
\end{align}
\\
This simple computation shows that the skeleton $L$ will be modified in the expression for $\widetilde{Z}_f(L)$ by the introduction of a hook of the above form at each local max but for one local max on each component by definition of $\widetilde{Z}_f$.
\begin{Ltilde}
Let $\widetilde{L}$ be the link $L$ with each local max being tweaked into a left pointing hook as above, but for one local max on each link component.
\end{Ltilde}
We consider the handle slide of the handle corresponding to the $i$-th component $K_i$ over the handle corresponding to the $j$-th component $K_j$. This is done by doing a band sum move of $K_i$ over $K_j$. The result of such a band sum move is denoted by $L'$. However we work with $\widetilde{L}$ and $\widetilde{L'}$ as the rest of the section will make clear. We slice $\widetilde{L}$ into N vertical strips as follows. For a local maximum on the $s$-th component $K_s$ such as:\\
\setlength{\unitlength}{0.4cm}
\begin{picture}(7,6)(-13,3)
\thicklines
\put(2,2){\line(0,1){3}}
\put(6,2){\line(0,1){3}}
\multiput(2,0)(0,0.5){4}{\line(0,1){0.2}}
\multiput(6,0)(0,0.5){4}{\line(0,1){0.2}}
\put(4,5){\oval(4,4)[t]}
\multiput(2.1,4)(0.4,0){10}{\circle*{0.2}}
\put(1.5,6.5){\circle{1.5}}
\put(1.3,6.3){$s$}
\end{picture}\\ \\
\\ \\
the slicing is performed as follows:\\
\setlength{\unitlength}{0.4cm}
\begin{picture}(7,8)(-13,3)
\put(4,0){\line(0,1){9}}
\put(1.5,0){\line(0,1){5.8}}
\put(1.5,7.5){\line(0,1){2}}
\put(6.5,0){\line(0,1){9}}
\thicklines
\put(2,2){\line(0,1){3}}
\put(6,2){\line(0,1){3}}
\multiput(2,0)(0,0.5){4}{\line(0,1){0.2}}
\multiput(6,0)(0,0.5){4}{\line(0,1){0.2}}
\put(4,5){\oval(4,4)[t]}
\multiput(2.1,4)(0.4,0){10}{\circle*{0.2}}
\put(1.5,6.5){\circle{1.5}}
\put(1.3,6.3){$s$}
\end{picture}\\ \\
\\ \\
We do want each local max to be enclosed within two vertical slices to distinguish neighboring local extrema of a same component. If we call the vertical slices on either side of a local max dividing slices, it follows in practice that consecutive local extrema share a dividing slice is sufficient as we will see later. We do this at each local max of each component $K_s$ of the link $\widetilde{L}$. We slice each local min of each component in like manner, keeping in mind that consecutive local extrema can share dividing slices. We number those vertical strips formed from this slicing procedure starting from the left.\\

In the above situation, we would have the strips with the following labels:\\
\setlength{\unitlength}{0.4cm}
\begin{picture}(7,8)(-13,3)
\put(0,0){$\text{\tiny n-1}$}
\put(3,0){$\text{\tiny n}$}
\put(4.2,0){$\text{\tiny n+1}$}
\put(6.7,0){$\text{\tiny n+2}$}
\put(4,0){\line(0,1){9}}
\put(1.5,0){\line(0,1){5.8}}
\put(1.5,7.5){\line(0,1){2}}
\put(6.5,0){\line(0,1){9}}
\thicklines
\put(2,2){\line(0,1){3}}
\put(6,2){\line(0,1){3}}
\multiput(2,0)(0,0.5){4}{\line(0,1){0.2}}
\multiput(6,0)(0,0.5){4}{\line(0,1){0.2}}
\put(4,5){\oval(4,4)[t]}
\multiput(2.1,4)(0.4,0){10}{\circle*{0.2}}
\put(1.5,6.5){\circle{1.5}}
\put(1.3,6.3){$s$}
\end{picture}\\ \\
\\ \\

We now discuss the labeling of the chords. For each time, we have a chord. Thus it is natural to number the chords from the bottom up. If $t_1 < \cdots t_m$ are the different times corresponding to $m$ different chords, then those corresponding chords will be labeled $1,\cdots,m$ respectively.\\

\subsubsection{Pages as representations of tangle chord diagrams}
Consider the generic situation of one chord stretching between 2 components of $L$ indexed by $s$ and $t$:\\
\setlength{\unitlength}{0.4cm}
\begin{picture}(15,11)(-10,2)
\put(4,0){\line(0,1){11}}
\put(10,0){\line(0,1){11}}
\thicklines
\put(2.5,0){$\text{\small n}$}
\put(1,8){\line(4,-5){4}}
\put(2.5,9.5){\circle{2}}
\put(2.3,9.3){$s$}
\put(12,0){$\text{\small p}$}
\put(13,8){\line(-3,-5){4}}
\put(13.5,9.5){\circle{2}}
\put(13.3,9.3){$t$}
\multiput(6,9)(1,0){3}{\circle*{0.3}}
\multiput(3,6)(0.4,0){23}{\circle*{0.2}}
\end{picture}\\ \\
\\ \\
where we have displayed only portions of the $s$ and $t$ components on which the $a$-th chord is ending, $1 \leq a \leq m$ and $1 \leq n,p \leq N$ are strip indices.\\

We represent each such chord by a $qN \times qN$ matrix in the basis given by the ordering of the components, and of the strips, as $(1,2,\cdots, N, 1,2, \cdots, N, \cdots, N)$ the first $N$ vectors $1,2,\cdots,N$ corresponding to the first component of $L$, followed by those for the second component, and so on, until the $q$-th component. The $st$ block of that matrix will carry information about chords between the $s$ and $t$ components of the link. In the above situation, the $st$ block has all its entries zero, except the $np$ entry which is one. Note that the matrix is symmetric, so all blocks are empty for that particular chord, except the $ts$-th one, whose $pn$ entry is one as well. We represent such a matrix as follows where without loss of generality we have chosen $s<t$ and $n<p$:
\beq
\setlength{\unitlength}{0.4cm}
\begin{picture}(14,15)
\put(2,10){\line(1,-1){10}}
\put(2,5){\line(1,-1){4}}
\put(7,10){\line(1,-1){4}}
\thicklines
\put(2,5){\oval(2,12)[l]}
\put(13,5){\oval(2,12)[r]}
\put(0,3){$t$}
\put(0,8){$s$}
\put(4,12){$s$}
\put(9,12){$t$}
\put(2,1){\line(0,1){4}}
\put(2,1){\line(1,0){4}}
\put(2,5){\line(1,0){4}}
\put(6,1){\line(0,1){4}}
\put(3,1.8){$1$}
\put(1.5,1.8){$\text{\tiny p}$}
\put(3,5.3){$\text{\tiny n}$}
\put(7,6){\line(1,0){4}}
\put(7,6){\line(0,1){4}}
\put(7,10){\line(1,0){4}}
\put(11,6){\line(0,1){4}}
\put(10,9){$1$}
\put(6.3,9){$\text{\tiny n}$}
\put(10,10.3){$\text{\tiny p}$}
\end{picture}
\eeq
\\
\\
\\
We will refer to such a matrix as a page, and we denote it by $A_{s,t,n,p}$ with obvious notations. We do this for all chords of a given tangle chord diagram $X$ of degree $m$. The information about its chords will therefore be given by $m$ ordered pages from the bottom up, the collection of which will be referred to as a book. We denote a book as follows:
\beq
A_{I,J,U,V}:=\stck_{1 \leq a \leq m}A_{i_a,j_a,u_a,v_a}
\eeq
where $I$, $J$, $U$ and $V$ are multi-indices defined by:
\begin{align}
I&=(i_1,\cdots ,i_m) \\
J&=(j_1,\cdots ,j_m) \\
U&=(u_1,\cdots ,u_m) \\
V&=(v_1,\cdots ,v_m)
\end{align}
with $1 \leq i_l,j_l \leq q$ are component indices, $1 \leq u_l,v_l \leq N$ are strip indices for $1 \leq l \leq m$. We denote the size of such multi-indices by $|I|=|J|=|U|=|V|=m$. In the above example, for the $a$-th chord, we have $i_a=s$, $j_a=t$, $u_a=n$ and $v_a=p$. In the product of pages defining a book, pages are ordered from the bottom up. Now instead of using the notation $X$ for a tangle chord diagram, we use the book notation $A_{I,J,U,V}$ which incorporates the information about the chords on the tangle. We have:
\beq
Z_f(L)=\sum_{\substack{\text{chord}\\ \text{diagr. X}}} a_X X =\sum_{m \geq 0} \sum_{\substack{|I|=|J|=m \\ |U|=|V|=m}} a_{I,J,U,V} A_{I,J,U,V}(L)
\eeq
However $\widetilde{Z}_f(L)$ uses powers of
\beq
\nu=Z_f(U)^{-1}=Z_f(
\setlength{\unitlength}{0.2cm}
\begin{picture}(11,4)(-1,2)
\thicklines
\put(1.5,3){\oval(1,2)[b]}
\put(4.5,3){\oval(5,2)[t]}
\put(4.5,3){\oval(7,4)[t]}
\put(7.5,3){\oval(1,2)[b]}
\end{picture})
\eeq
and with the above slicing performed on $\widetilde{L}$ we are able to use books $A_{IJUV}(\widetilde{L})=A_{IJUV}$. We can therefore write:
\beq
\widetilde{Z}_f(L)=\sum_{\substack{\text{chord}\\ \text{diagr. X}}} c_X X =\sum_{m \geq 0} \sum_{\substack{|I|=|J|=m \\ |U|=|V|=m}} c_{I,J,U,V} A_{I,J,U,V}
\eeq
where we have $c_{I,J,U,V}:=c_{A_{IJUV}}$.\\

Once we have this picture of sum of books describing tangle chord diagrams, we can add another constraint on each book; we insist that whenever we have a given tangle chord diagram, each of its chords is moved up until it reaches a local max, and it is the book of such a tangle chord diagram we exhibit in $\widetilde{Z}_f(L)$.

\section{Behavior of the Kontsevich integral $\widetilde{Z}_f$ under handle slide using the book notation}
What we have in ~\eqref{hsformula} is the following: if $h_{ij}$ denotes the band sum move map on links corresponding to the band sum move of the $i$-th component of $L$ over its $j$-th component, then we can write $L'=h_{ij}(L)$, so that $\widetilde{Z}_f(L')=\widetilde{Z}_f(h_{ij}L)$. What we would like however is to find a map $\mathbf{h}_{ij}$ induced from $h_{ij}$ that acts on the Kontsevich integral $\widetilde{Z}_f(L)$ of links $L$ to yield their corresponding values after handle slide. We write this as $\mathbf{h}_{ij}(\widetilde{Z}_f)$. We claim that there is such a map, and that moreover for any link $L$, we have $\mathbf{h}_{ij}(\widetilde{Z}_f(L))=\widetilde{Z}_f(h_{ij}L)$. In other terms, the following diagram is commutative:\\
\setlength{\unitlength}{0.4cm}
\begin{picture}(14,10)(-8,2)
\thicklines
\put(5,8){\vector(1,0){6}}
\put(3,7){\vector(0,-1){4}}
\put(4.5,1){\vector(1,0){3.5}}
\put(12,7){\vector(0,-1){4}}
\put(3,8){$L$}
\put(0,1){$\widetilde{Z}_f(L)$}
\put(9,1){$\mathbf{h}_{ij}\widetilde{Z}_f(L)=\widetilde{Z}_f(h_{ij}L)$}
\put(12,8){$h_{ij}L$}
\put(7,9){$h_{ij}$}
\put(0,5){$\widetilde{Z}_f$}
\put(6,-0.5){$\mathbf{h_{ij}}$}
\put(13,5){$\widetilde{Z}_f$}
\end{picture}\\ \\
\\ \\
It is important to remember that using the isotopy invariance of $\widetilde{Z}_f$, we can arrange that the band sum does not introduce new local extrema, and is far away from the rest of the link that we may be able to use the Long Chords Lemma (\cite{ChDu}).\\

We have the following fact about books; since no two chords can be positioned at the same height $t$ on a tangle, pages, which represent chords, can allow for the possibility to hold many other chords by virtue of the non-simultaneity of chords. In this manner there is no ambiguity as to what chord in a page is represented by which entry. Further if a page holds information about more than one chord, we can split the matrix in as many matrices as there are chords represented in the original page. We illustrate this situation presently:\\
\setlength{\unitlength}{0.4cm}
\begin{picture}(15,16)(-10,2)
\put(2,11){\line(1,-1){10}}
\thicklines
\put(2,6){\oval(2,12)[l]}
\put(12,6){\oval(2,12)[r]}
\put(4,5){$1$}
\put(6,3){$1$}
\put(8,9){$1$}
\put(10,7){$1$}
\put(-1,3){$l$}
\put(0,3){$\text{\tiny v}$}
\put(-1,5){$k$}
\put(0,5){$\text{\tiny u}$}
\put(-1,7){$h$}
\put(0,7){$\text{\tiny p}$}
\put(-1,9){$g$}
\put(0,9){$\text{\tiny n}$}
\put(4,14){$g$}
\put(4,13){$\text{\tiny n}$}
\put(6,14){$h$}
\put(6,13){$\text{\tiny p}$}
\put(8,14){$k$}
\put(8,13){$\text{\tiny u}$}
\put(10,14){$l$}
\put(10,13){$\text{\tiny v}$}
\end{picture}\\ \\
\\ \\
This represents a page carrying information about two chords, one between the $g$-th and $k$-th components, the other between the $h$-th and $l$-th components. For the first chord, the foot on $K_g$ is in the $n$-th strip, and the foot on $K_k$ is in the $u$-th strip. For the second chord, the foot on $K_h$ is in the $p$-th strip, and the foot on $K_l$ is in the $v$-th strip. We have indicated to the left and above the matrix the block indices $g$, $h$, $k$ and $l$, and in small letters the strips within the blocks where a foot is ending, and those are $n$, $p$, $u$ and $v$. Such a matrix splits as follows:
\beq
\setlength{\unitlength}{0.2cm}
\begin{picture}(15,12)(0,5)
\put(2,11){\line(1,-1){10}}
\thicklines
\put(2,6){\oval(2,12)[l]}
\put(12,6){\oval(2,12)[r]}
\put(4,5){$1$}
\put(8,9){$1$}
\put(-2,3){$l$}
\put(0,3){$\text{\tiny v}$}
\put(-2,5){$k$}
\put(0,5){$\text{\tiny u}$}
\put(-2,7){$h$}
\put(0,7){$\text{\tiny p}$}
\put(-2,9){$g$}
\put(0,9){$\text{\tiny n}$}
\put(4,14){$g$}
\put(4,13){$\text{\tiny n}$}
\put(6,14){$h$}
\put(6,13){$\text{\tiny p}$}
\put(8,14){$k$}
\put(8,13){$\text{\tiny u}$}
\put(10,14){$l$}
\put(10,13){$\text{\tiny v}$}
\end{picture}\\ \\ + \setlength{\unitlength}{0.2cm}
\begin{picture}(15,12)(-4,5)
\put(2,11){\line(1,-1){10}}
\thicklines
\put(2,6){\oval(2,12)[l]}
\put(12,6){\oval(2,12)[r]}
\put(6,3){$1$}
\put(10,7){$1$}
\put(-2,3){$l$}
\put(0,3){$\text{\tiny v}$}
\put(-2,5){$k$}
\put(0,5){$\text{\tiny u}$}
\put(-2,7){$h$}
\put(0,7){$\text{\tiny p}$}
\put(-2,9){$g$}
\put(0,9){$\text{\tiny n}$}
\put(4,14){$g$}
\put(4,13){$\text{\tiny n}$}
\put(6,14){$h$}
\put(6,13){$\text{\tiny p}$}
\put(8,14){$k$}
\put(8,13){$\text{\tiny u}$}
\put(10,14){$l$}
\put(10,13){$\text{\tiny v}$}
\end{picture}\\ \\
\eeq
\\
\\
Since a page holds some information about one chord only by non-simultaneity of chords, if a page displays the information about more than one chord, we can isolate the information about each chord as a direct sum of pages each of which carries information about a unique chord. If we call the original matrix $A$ and the two spin-off matrices $B$ and $C$, then inserting $A$ in a book of $m$ pages as follows:
\beq
A^{-} \times A \times A^{+} \label{3A}
\eeq
where $A^{-}$ are the first $m^{-}$ pages of the book, and $A^{+}$ are the last $m^{+}$ pages, with $m^{-}+1+m^+=m$, then we can write:
\beq
A^{-} \times A \times A^{+}=A^{-} \times B \times A^{+}+A^{-} \times C \times A^{+} \label{ABC}
\eeq
representing two chord diagrams, one of which has its $(m^- +1)$-st chord represented by the page $B$, the other has its $(m^- +1)$-st chord represented by $C$. We have done this for a page $A$ containing the information about two chords. We generalize ~\eqref{3A} by iterating this process for pages that contain information about more than 2 chords and generalize ~\eqref{ABC} by iteration for books with two or more pages written as a sum of matrices.\\

We now discuss the band sum move proper. By virtue of the fact that we have:
\beq
\Delta
\setlength{\unitlength}{0.3cm}
\begin{picture}(5,4)(-0.5,2)
\thicklines
\put(1,0){\vector(0,1){5}}
\multiput(1,1)(0.4,0){5}{\circle*{0.2}}
\multiput(1,3)(0.4,0){5}{\circle*{0.2}}
\end{picture}=
\Delta
\setlength{\unitlength}{0.3cm}
\begin{picture}(4.5,4)(-0.5,2)
\thicklines
\put(1,0){\vector(0,1){5}}
\multiput(1,3)(0.4,0){5}{\circle*{0.2}}
\end{picture} \times \;
\Delta
\setlength{\unitlength}{0.3cm}
\begin{picture}(5,4)(-0.5,2)
\thicklines
\put(1,0){\vector(0,1){5}}
\multiput(1,1)(0.4,0){5}{\circle*{0.2}}
\end{picture}
\eeq
\\
or by abuse of notation:
\beq
\setlength{\unitlength}{0.4cm}
\begin{picture}(7,5)(0,3)
\thicklines
\put(0.5,0){\line(0,1){2}}
\put(1.5,0){\line(0,1){2}}
\put(0.5,5){\line(0,1){2}}
\put(1.5,5){\line(0,1){2}}
\put(0,2){\line(0,1){3}}
\put(0,2){\line(1,0){2}}
\put(0,5){\line(1,0){2}}
\put(2,2){\line(0,1){3}}
\put(0.5,3){$\Delta$}
\multiput(2.2,4)(0.4,0){5}{\circle*{0.2}}
\multiput(2.2,3)(0.4,0){5}{\circle*{0.2}}
\end{picture}\\ \\=\setlength{\unitlength}{0.4cm}
\begin{picture}(7,5)(-4,3)
\thicklines
\put(0.5,0){\line(0,1){2}}
\put(1.5,0){\line(0,1){2}}
\put(0.5,5){\line(0,1){2}}
\put(1.5,5){\line(0,1){2}}
\put(0.5,3){\line(0,1){1}}
\put(1.5,3){\line(0,1){1}}
\put(0,2){\line(0,1){1}}
\put(0,2){\line(1,0){2}}
\put(0,5){\line(1,0){2}}
\put(2,2){\line(0,1){1}}
\put(0,4){\line(0,1){1}}
\put(2,4){\line(0,1){1}}
\put(0,3){\line(1,0){2}}
\put(0,4){\line(1,0){2}}
\put(0.5,2.2){$\Delta$}
\put(0.5,4.2){$\Delta$}
\multiput(2.2,4.5)(0.4,0){5}{\circle*{0.2}}
\multiput(2.2,2.5)(0.4,0){5}{\circle*{0.2}}
\end{picture}\\ \\
\eeq
\\ \\
then studying the doubling of chords during the band sum move can be done one chord after another. It suffices to work with one page at a time. During the handle slide of the handle corresponding to the $i$-th component $K_i$ over the $j$-th component $K_j$ of the link $L$, we encounter two different situations:
\beq
\setlength{\unitlength}{0.3cm}
\begin{picture}(18,14)
\put(6,0){\line(0,1){13}}
\put(12,0){\line(0,1){13}}
\linethickness{0.5mm}
\put(3,2){\line(0,1){7}}
\put(14,2){\line(0,1){7}}
\multiput(3,5)(0.5,0){22}{\line(1,0){0.2}}
\multiput(8,8)(1,0){3}{\circle*{0.2}}
\put(5,1){$\text{\small n}$}
\put(13,1){$\text{\small p}$}
\put(1,9){\circle{1.5}}
\put(0.7,8.7){$j$}
\put(16,9){\circle{1.5}}
\put(15.7,8.7){$j$}
\end{picture}
\eeq
\\
\\
where $n<p$. In this first case, a given chord starts and ends on the $j$-th component, with matrix representation given by:
\beq
\setlength{\unitlength}{0.4cm}
\begin{picture}(14,14)
\put(2,10){\line(1,-1){5}}
\put(7,5){\line(1,-1){4}}
\put(11,1){\line(1,-1){1}}
\thicklines
\put(2,5){\oval(2,12)[l]}
\put(13,5){\oval(2,12)[r]}
\put(0,3){$j$}
\put(0,8){$i$}
\put(4,12){$i$}
\put(9,12){$j$}
\put(7,1){\line(0,1){4}}
\put(7,1){\line(1,0){4}}
\put(7,5){\line(1,0){4}}
\put(11,1){\line(0,1){4}}
\put(8,1.8){$1$}
\put(6.5,1.8){$\text{\tiny p}$}
\put(10,4){$1$}
\put(6.5,4){$\text{\tiny n}$}
\put(10,5.3){$\text{\tiny p}$}
\put(8,5.3){$\text{\tiny n}$}
\end{picture}
\eeq
\\
\\
\\
where without loss of generality we have chosen $i<j$ and the case $i>j$ is dealt with by a simple change of basis. Under a band sum move we obtain:
\beq
\setlength{\unitlength}{0.3cm}
\begin{picture}(18,14)
\put(6,0){\line(0,1){13}}
\put(12,0){\line(0,1){13}}
\linethickness{0.5mm}
\put(3,2){\line(0,1){2}}
\put(4,2){\line(0,1){2}}
\put(3,6){\line(0,1){3}}
\put(4,6){\line(0,1){3}}
\put(14,2){\line(0,1){2}}
\put(15,2){\line(0,1){2}}
\put(14,6){\line(0,1){3}}
\put(15,6){\line(0,1){3}}
\multiput(5,5)(0.5,0){16}{\line(1,0){0.2}}
\multiput(8,8)(1,0){3}{\circle*{0.2}}
\put(2,4){\line(1,0){3}}
\put(2,4){\line(0,1){2}}
\put(2,6){\line(1,0){3}}
\put(5,4){\line(0,1){2}}
\put(3,4.5){$\Delta$}
\put(13,4){\line(1,0){3}}
\put(13,4){\line(0,1){2}}
\put(13,6){\line(1,0){3}}
\put(16,4){\line(0,1){2}}
\put(14,4.5){$\Delta$}
\put(5,1){$\text{\small n}$}
\put(13,1){$\text{\small p}$}
\put(5,9){\circle{1.5}}
\put(4.7,8.7){$i$}
\put(13,9){\circle{1.5}}
\put(12.7,8.7){$i$}
\put(1,9){\circle{1.5}}
\put(0.7,8.7){$j$}
\put(16,9){\circle{1.5}}
\put(15.7,8.7){$j$}
\end{picture}
\eeq
\\
which equals:
\begin{align}
\setlength{\unitlength}{0.3cm}
\begin{picture}(18,11)(0,4)
\put(6,0){\line(0,1){13}}
\put(12,0){\line(0,1){13}}
\linethickness{0.5mm}
\put(3,2){\line(0,1){7}}
\put(4,2){\line(0,1){7}}
\put(14,2){\line(0,1){7}}
\put(15,2){\line(0,1){7}}
\multiput(4,5)(0.5,0){22}{\line(1,0){0.2}}
\multiput(8,8)(1,0){3}{\circle*{0.2}}
\put(5,1){$\text{\small n}$}
\put(13,1){$\text{\small p}$}
\put(5,9){\circle{1.5}}
\put(4.7,8.7){$i$}
\put(13,9){\circle{1.5}}
\put(12.7,8.7){$i$}
\put(1,9){\circle{1.5}}
\put(0.7,8.7){$j$}
\put(16,9){\circle{1.5}}
\put(15.7,8.7){$j$}
\end{picture} &+ \setlength{\unitlength}{0.3cm}
\begin{picture}(18,11)(0,4)
\put(6,0){\line(0,1){13}}
\put(12,0){\line(0,1){13}}
\linethickness{0.5mm}
\put(3,2){\line(0,1){7}}
\put(4,2){\line(0,1){7}}
\put(14,2){\line(0,1){7}}
\put(15,2){\line(0,1){7}}
\multiput(4,5)(0.5,0){20}{\line(1,0){0.2}}
\multiput(8,8)(1,0){3}{\circle*{0.2}}
\put(5,1){$\text{\small n}$}
\put(13,1){$\text{\small p}$}
\put(5,9){\circle{1.5}}
\put(4.7,8.7){$i$}
\put(13,9){\circle{1.5}}
\put(12.7,8.7){$i$}
\put(1,9){\circle{1.5}}
\put(0.7,8.7){$j$}
\put(16,9){\circle{1.5}}
\put(15.7,8.7){$j$}
\end{picture} \nonumber \\
+ \setlength{\unitlength}{0.3cm}
\begin{picture}(18,14)(0,4)
\put(6,0){\line(0,1){13}}
\put(12,0){\line(0,1){13}}
\linethickness{0.5mm}
\put(3,2){\line(0,1){7}}
\put(4,2){\line(0,1){7}}
\put(14,2){\line(0,1){7}}
\put(15,2){\line(0,1){7}}
\multiput(3,5)(0.5,0){22}{\line(1,0){0.2}}
\multiput(8,8)(1,0){3}{\circle*{0.2}}
\put(5,1){$\text{\small n}$}
\put(13,1){$\text{\small p}$}
\put(5,9){\circle{1.5}}
\put(4.7,8.7){$i$}
\put(13,9){\circle{1.5}}
\put(12.7,8.7){$i$}
\put(1,9){\circle{1.5}}
\put(0.7,8.7){$j$}
\put(16,9){\circle{1.5}}
\put(15.7,8.7){$j$}
\end{picture} &+ \setlength{\unitlength}{0.3cm}
\begin{picture}(18,14)(0,4)
\put(6,0){\line(0,1){13}}
\put(12,0){\line(0,1){13}}
\linethickness{0.5mm}
\put(3,2){\line(0,1){7}}
\put(4,2){\line(0,1){7}}
\put(14,2){\line(0,1){7}}
\put(15,2){\line(0,1){7}}
\multiput(3,5)(0.5,0){24}{\line(1,0){0.2}}
\multiput(8,8)(1,0){3}{\circle*{0.2}}
\put(5,1){$\text{\small n}$}
\put(13,1){$\text{\small p}$}
\put(5,9){\circle{1.5}}
\put(4.7,8.7){$i$}
\put(13,9){\circle{1.5}}
\put(12.7,8.7){$i$}
\put(1,9){\circle{1.5}}
\put(0.7,8.7){$j$}
\put(16,9){\circle{1.5}}
\put(15.7,8.7){$j$}
\end{picture}\\ \nonumber
\end{align}
\\ \\
\\
a sum of chord diagrams that correspond, in this order, to the following sum of matrices:\\
\begin{align}
\setlength{\unitlength}{0.4cm}
\begin{picture}(14,14)(0,-1)
\put(2,10){\line(1,-1){4}}
\put(7,5){\line(1,-1){4}}
\put(7,10){\line(1,-1){4}}
\put(2,5){\line(1,-1){4}}
\thicklines
\put(2,5){\oval(2,12)[l]}
\put(12,5){\oval(2,12)[r]}
\put(0,3){$j$}
\put(0,8){$i$}
\put(4,12){$i$}
\put(9,12){$j$}
\put(7,1){\line(0,1){4}}
\put(7,1){\line(1,0){4}}
\put(7,5){\line(1,0){4}}
\put(11,1){\line(0,1){4}}
\put(6.5,1.8){$\text{\tiny p}$}
\put(6.5,4){$\text{\tiny n}$}
\put(10,5.3){$\text{\tiny p}$}
\put(8,5.3){$\text{\tiny n}$}
\put(2,1){\line(0,1){4}}
\put(2,1){\line(1,0){4}}
\put(2,5){\line(1,0){4}}
\put(6,1){\line(0,1){4}}
\put(3,1.8){$1$}
\put(1.5,1.8){$\text{\tiny p}$}
\put(1.5,4){$\text{\tiny n}$}
\put(5,5.3){$\text{\tiny p}$}
\put(3,5.3){$\text{\tiny n}$}
\put(2,6){\line(0,1){4}}
\put(2,6){\line(1,0){4}}
\put(2,10){\line(1,0){4}}
\put(6,6){\line(0,1){4}}
\put(1.5,6.8){$\text{\tiny p}$}
\put(1.5,9){$\text{\tiny n}$}
\put(5,10.3){$\text{\tiny p}$}
\put(3,10.3){$\text{\tiny n}$}
\put(7,6){\line(0,1){4}}
\put(7,6){\line(1,0){4}}
\put(7,10){\line(1,0){4}}
\put(11,6){\line(0,1){4}}
\put(6.5,6.8){$\text{\tiny p}$}
\put(10,9){$1$}
\put(6.5,9){$\text{\tiny n}$}
\put(10,10.3){$\text{\tiny p}$}
\put(8,10.3){$\text{\tiny n}$}
\end{picture}
 &+ \setlength{\unitlength}{0.4cm}
\begin{picture}(14,14)(-1,-1)
\put(2,10){\line(1,-1){4}}
\put(7,5){\line(1,-1){4}}
\put(7,10){\line(1,-1){4}}
\put(2,5){\line(1,-1){4}}
\thicklines
\put(2,5){\oval(2,12)[l]}
\put(12,5){\oval(2,12)[r]}
\put(0,3){$j$}
\put(0,8){$i$}
\put(4,12){$i$}
\put(9,12){$j$}
\put(7,1){\line(0,1){4}}
\put(7,1){\line(1,0){4}}
\put(7,5){\line(1,0){4}}
\put(11,1){\line(0,1){4}}
\put(6.5,1.8){$\text{\tiny p}$}
\put(6.5,4){$\text{\tiny n}$}
\put(10,5.3){$\text{\tiny p}$}
\put(8,5.3){$\text{\tiny n}$}
\put(2,1){\line(0,1){4}}
\put(2,1){\line(1,0){4}}
\put(2,5){\line(1,0){4}}
\put(6,1){\line(0,1){4}}
\put(1.5,1.8){$\text{\tiny p}$}
\put(1.5,4){$\text{\tiny n}$}
\put(5,5.3){$\text{\tiny p}$}
\put(3,5.3){$\text{\tiny n}$}
\put(2,6){\line(0,1){4}}
\put(2,6){\line(1,0){4}}
\put(2,10){\line(1,0){4}}
\put(6,6){\line(0,1){4}}
\put(3,6.8){$1$}
\put(1.5,6.8){$\text{\tiny p}$}
\put(5,9){$1$}
\put(1.5,9){$\text{\tiny n}$}
\put(5,10.3){$\text{\tiny p}$}
\put(3,10.3){$\text{\tiny n}$}
\put(7,6){\line(0,1){4}}
\put(7,6){\line(1,0){4}}
\put(7,10){\line(1,0){4}}
\put(11,6){\line(0,1){4}}
\put(6.5,6.8){$\text{\tiny p}$}
\put(6.5,9){$\text{\tiny n}$}
\put(10,10.3){$\text{\tiny p}$}
\put(8,10.3){$\text{\tiny n}$}
\end{picture}
 \nonumber \\
+ \setlength{\unitlength}{0.4cm}
\begin{picture}(14,14)(-1,0)
\put(2,10){\line(1,-1){4}}
\put(7,5){\line(1,-1){4}}
\put(7,10){\line(1,-1){4}}
\put(2,5){\line(1,-1){4}}
\thicklines
\put(2,5){\oval(2,12)[l]}
\put(12,5){\oval(2,12)[r]}
\put(0,3){$j$}
\put(0,8){$i$}
\put(4,12){$i$}
\put(9,12){$j$}
\put(7,1){\line(0,1){4}}
\put(7,1){\line(1,0){4}}
\put(7,5){\line(1,0){4}}
\put(11,1){\line(0,1){4}}
\put(6.5,1.8){$\text{\tiny p}$}
\put(6.5,4){$\text{\tiny n}$}
\put(10,5.3){$\text{\tiny p}$}
\put(8,5.3){$\text{\tiny n}$}
\put(2,1){\line(0,1){4}}
\put(2,1){\line(1,0){4}}
\put(2,5){\line(1,0){4}}
\put(6,1){\line(0,1){4}}
\put(1.5,1.8){$\text{\tiny p}$}
\put(5,4){$1$}
\put(1.5,4){$\text{\tiny n}$}
\put(5,5.3){$\text{\tiny p}$}
\put(3,5.3){$\text{\tiny n}$}
\put(2,6){\line(0,1){4}}
\put(2,6){\line(1,0){4}}
\put(2,10){\line(1,0){4}}
\put(6,6){\line(0,1){4}}
\put(1.5,6.8){$\text{\tiny p}$}
\put(1.5,9){$\text{\tiny n}$}
\put(5,10.3){$\text{\tiny p}$}
\put(3,10.3){$\text{\tiny n}$}
\put(7,6){\line(0,1){4}}
\put(7,6){\line(1,0){4}}
\put(7,10){\line(1,0){4}}
\put(11,6){\line(0,1){4}}
\put(8,6.8){$1$}
\put(6.5,6.8){$\text{\tiny p}$}
\put(6.5,9){$\text{\tiny n}$}
\put(10,10.3){$\text{\tiny p}$}
\put(8,10.3){$\text{\tiny n}$}
\end{picture}
 &\quad + \setlength{\unitlength}{0.4cm}
\begin{picture}(14,14)(-1,0)
\put(2,10){\line(1,-1){4}}
\put(7,5){\line(1,-1){4}}
\put(7,10){\line(1,-1){4}}
\put(2,5){\line(1,-1){4}}
\thicklines
\put(2,5){\oval(2,12)[l]}
\put(12,5){\oval(2,12)[r]}
\put(0,3){$j$}
\put(0,8){$i$}
\put(4,12){$i$}
\put(9,12){$j$}
\put(7,1){\line(0,1){4}}
\put(7,1){\line(1,0){4}}
\put(7,5){\line(1,0){4}}
\put(11,1){\line(0,1){4}}
\put(8,1.8){$1$}
\put(6.5,1.8){$\text{\tiny p}$}
\put(10,4){$1$}
\put(6.5,4){$\text{\tiny n}$}
\put(10,5.3){$\text{\tiny p}$}
\put(8,5.3){$\text{\tiny n}$}
\put(2,1){\line(0,1){4}}
\put(2,1){\line(1,0){4}}
\put(2,5){\line(1,0){4}}
\put(6,1){\line(0,1){4}}
\put(1.5,1.8){$\text{\tiny p}$}
\put(1.5,4){$\text{\tiny n}$}
\put(5,5.3){$\text{\tiny p}$}
\put(3,5.3){$\text{\tiny n}$}
\put(2,6){\line(0,1){4}}
\put(2,6){\line(1,0){4}}
\put(2,10){\line(1,0){4}}
\put(6,6){\line(0,1){4}}
\put(1.5,6.8){$\text{\tiny p}$}
\put(1.5,9){$\text{\tiny n}$}
\put(5,10.3){$\text{\tiny p}$}
\put(3,10.3){$\text{\tiny n}$}
\put(7,6){\line(0,1){4}}
\put(7,6){\line(1,0){4}}
\put(7,10){\line(1,0){4}}
\put(11,6){\line(0,1){4}}
\put(6.5,6.8){$\text{\tiny p}$}
\put(6.5,9){$\text{\tiny n}$}
\put(10,10.3){$\text{\tiny p}$}
\put(8,10.3){$\text{\tiny n}$}
\end{picture}
 \nonumber
\end{align}
\\ \\
\\ \\
\newpage
which combines into:
\beq
\setlength{\unitlength}{0.4cm}
\begin{picture}(14,13)
\put(2,10){\line(1,-1){4}}
\put(7,5){\line(1,-1){4}}
\put(7,10){\line(1,-1){4}}
\put(2,5){\line(1,-1){4}}
\thicklines
\put(2,5){\oval(2,12)[l]}
\put(12,5){\oval(2,12)[r]}
\put(0,3){$j$}
\put(0,8){$i$}
\put(4,12){$i$}
\put(9,12){$j$}
\put(7,1){\line(0,1){4}}
\put(7,1){\line(1,0){4}}
\put(7,5){\line(1,0){4}}
\put(11,1){\line(0,1){4}}
\put(8,1.8){$1$}
\put(6.5,1.8){$\text{\tiny p}$}
\put(10,4){$1$}
\put(6.5,4){$\text{\tiny n}$}
\put(10,5.3){$\text{\tiny p}$}
\put(8,5.3){$\text{\tiny n}$}
\put(2,1){\line(0,1){4}}
\put(2,1){\line(1,0){4}}
\put(2,5){\line(1,0){4}}
\put(6,1){\line(0,1){4}}
\put(3,1.8){$1$}
\put(1.5,1.8){$\text{\tiny p}$}
\put(5,4){$1$}
\put(1.5,4){$\text{\tiny n}$}
\put(5,5.3){$\text{\tiny p}$}
\put(3,5.3){$\text{\tiny n}$}
\put(2,6){\line(0,1){4}}
\put(2,6){\line(1,0){4}}
\put(2,10){\line(1,0){4}}
\put(6,6){\line(0,1){4}}
\put(3,6.8){$1$}
\put(1.5,6.8){$\text{\tiny p}$}
\put(5,9){$1$}
\put(1.5,9){$\text{\tiny n}$}
\put(5,10.3){$\text{\tiny p}$}
\put(3,10.3){$\text{\tiny n}$}
\put(7,6){\line(0,1){4}}
\put(7,6){\line(1,0){4}}
\put(7,10){\line(1,0){4}}
\put(11,6){\line(0,1){4}}
\put(8,6.8){$1$}
\put(6.5,6.8){$\text{\tiny p}$}
\put(10,9){$1$}
\put(6.5,9){$\text{\tiny n}$}
\put(10,10.3){$\text{\tiny p}$}
\put(8,10.3){$\text{\tiny n}$}
\end{picture}
\eeq
\\
\\
The second situation we can have is the case where the chord starts on the $j$-th component but ends on some other $l$-th component. Without loss of generality we can pick $i<j<l$. For other arrangements of these indices we modify the basis for our matrices accordingly. We discuss the case $l=i$ right after since it doesn't consist in a basis change. Pictorially we have:
\beq
\setlength{\unitlength}{0.3cm}
\begin{picture}(18,14)
\put(6,0){\line(0,1){13}}
\put(12,0){\line(0,1){13}}
\linethickness{0.5mm}
\put(3,2){\line(0,1){7}}
\put(14,2){\line(0,1){7}}
\multiput(3,5)(0.5,0){22}{\line(1,0){0.2}}
\multiput(8,8)(1,0){3}{\circle*{0.2}}
\put(5,1){$\text{\small n}$}
\put(13,1){$\text{\small p}$}
\put(1,9){\circle{1.5}}
\put(0.7,8.7){$j$}
\put(16,9){\circle{1.5}}
\put(15.7,8.7){$l$}
\end{picture}
\eeq
\\
\\
with the following matrix representation:
\beq
\setlength{\unitlength}{0.3cm}
\begin{picture}(19,21)
\put(8,6){\line(1,-1){4}}
\put(13,11){\line(1,-1){4}}
\thicklines
\put(3,9){\oval(2,18)[l]}
\put(17,9){\oval(2,18)[r]}
\put(8,2){\line(1,0){4}}
\put(8,2){\line(0,1){4}}
\put(8,6){\line(1,0){4}}
\put(12,2){\line(0,1){4}}
\put(9,2.8){$\text{\small 1}$}
\put(13,7){\line(1,0){4}}
\put(13,7){\line(0,1){4}}
\put(13,11){\line(1,0){4}}
\put(17,7){\line(0,1){4}}
\put(16,10){$\text{\small 1}$}
\put(9,6.3){$\text{\tiny n}$}
\put(7.3,3){$\text{\tiny p}$}
\put(12.3,10){$\text{\tiny n}$}
\put(16,11.3){$\text{\tiny p}$}
\put(1,14){$i$}
\put(1,9){$j$}
\put(1,4){$l$}
\put(5,19){$i$}
\put(10,19){$j$}
\put(15,19){$l$}
\end{picture}
\eeq
\\
After a band sum move we get the following chord diagram:
\beq
\setlength{\unitlength}{0.3cm}
\begin{picture}(18,14)
\put(6,0){\line(0,1){13}}
\put(12,0){\line(0,1){13}}
\linethickness{0.5mm}
\put(3,2){\line(0,1){2}}
\put(4,2){\line(0,1){2}}
\put(3,6){\line(0,1){3}}
\put(4,6){\line(0,1){3}}
\put(14,2){\line(0,1){7}}
\multiput(5,5)(0.5,0){18}{\line(1,0){0.2}}
\multiput(8,8)(1,0){3}{\circle*{0.2}}
\put(2,4){\line(1,0){3}}
\put(2,4){\line(0,1){2}}
\put(2,6){\line(1,0){3}}
\put(5,4){\line(0,1){2}}
\put(3,4.5){$\Delta$}
\put(5,1){$\text{\small n}$}
\put(13,1){$\text{\small p}$}
\put(5,9){\circle{1.5}}
\put(4.7,8.7){$i$}
\put(1,9){\circle{1.5}}
\put(0.7,8.7){$j$}
\put(13,9){\circle{1.5}}
\put(12.7,8.7){$l$}
\end{picture}
\eeq
which equals:
\begin{align}
\setlength{\unitlength}{0.3cm}
\begin{picture}(18,11)(0,3)
\put(6,0){\line(0,1){13}}
\put(12,0){\line(0,1){13}}
\linethickness{0.5mm}
\put(3,2){\line(0,1){7}}
\put(4,2){\line(0,1){7}}
\put(14,2){\line(0,1){7}}
\multiput(4,5)(0.5,0){20}{\line(1,0){0.2}}
\multiput(8,8)(1,0){3}{\circle*{0.2}}
\put(5,1){$\text{\small n}$}
\put(13,1){$\text{\small p}$}
\put(5,9){\circle{1.5}}
\put(4.7,8.7){$i$}
\put(1,9){\circle{1.5}}
\put(0.7,8.7){$j$}
\put(16,9){\circle{1.5}}
\put(15.7,8.7){$l$}
\end{picture}+\setlength{\unitlength}{0.3cm}
\begin{picture}(18,11)(0,3)
\put(6,0){\line(0,1){13}}
\put(12,0){\line(0,1){13}}
\linethickness{0.5mm}
\put(3,2){\line(0,1){7}}
\put(4,2){\line(0,1){7}}
\put(14,2){\line(0,1){7}}
\multiput(3,5)(0.5,0){22}{\line(1,0){0.2}}
\multiput(8,8)(1,0){3}{\circle*{0.2}}
\put(5,1){$\text{\small n}$}
\put(13,1){$\text{\small p}$}
\put(5,9){\circle{1.5}}
\put(4.7,8.7){$i$}
\put(1,9){\circle{1.5}}
\put(0.7,8.7){$j$}
\put(16,9){\circle{1.5}}
\put(15.7,8.7){$l$}
\end{picture} \nonumber
\end{align}
\\ \\
\\
represented, in this order, by the sum of matrices:
\begin{align}
\setlength{\unitlength}{0.3cm}
\begin{picture}(19,13)(0,8)
\put(3,6){\line(1,-1){4}}
\put(13,16){\line(1,-1){4}}
\thicklines
\put(3,9){\oval(2,18)[l]}
\put(17,9){\oval(2,18)[r]}
\put(3,2){\line(1,0){4}}
\put(3,2){\line(0,1){4}}
\put(3,6){\line(1,0){4}}
\put(7,2){\line(0,1){4}}
\put(3.5,2.5){$\text{\small 1}$}
\put(13,12){\line(1,0){4}}
\put(13,12){\line(0,1){4}}
\put(13,16){\line(1,0){4}}
\put(17,12){\line(0,1){4}}
\put(16,15){$1$}
\put(1,14){$i$}
\put(1,9){$j$}
\put(1,4){$l$}
\put(5,19){$i$}
\put(10,19){$j$}
\put(15,19){$l$}
\end{picture} + \setlength{\unitlength}{0.3cm}
\begin{picture}(19,13)(0,8)
\put(8,6){\line(1,-1){4}}
\put(13,11){\line(1,-1){4}}
\thicklines
\put(3,9){\oval(2,18)[l]}
\put(17,9){\oval(2,18)[r]}
\put(8,2){\line(1,0){4}}
\put(8,2){\line(0,1){4}}
\put(8,6){\line(1,0){4}}
\put(12,2){\line(0,1){4}}
\put(8.5,2.5){$\text{\small 1}$}
\put(13,7){\line(1,0){4}}
\put(13,7){\line(0,1){4}}
\put(13,11){\line(1,0){4}}
\put(17,7){\line(0,1){4}}
\put(16,10){$1$}
\put(1,14){$i$}
\put(1,9){$j$}
\put(1,4){$l$}
\put(5,19){$i$}
\put(10,19){$j$}
\put(15,19){$l$}
\end{picture} \nonumber
\end{align}
\\ \\
\\
\\
\\
\\
combining into:
\beq
\setlength{\unitlength}{0.3cm}
\begin{picture}(19,21)
\put(8,6){\line(1,-1){4}}
\put(13,11){\line(1,-1){4}}
\put(3,6){\line(1,-1){4}}
\put(13,16){\line(1,-1){4}}
\thicklines
\put(3,9){\oval(2,18)[l]}
\put(17,9){\oval(2,18)[r]}
\put(8,2){\line(1,0){4}}
\put(8,2){\line(0,1){4}}
\put(8,6){\line(1,0){4}}
\put(12,2){\line(0,1){4}}
\put(8.5,2.5){$\text{\small 1}$}
\put(13,7){\line(1,0){4}}
\put(13,7){\line(0,1){4}}
\put(13,11){\line(1,0){4}}
\put(17,7){\line(0,1){4}}
\put(16,10){$\text{\small 1}$}
\put(3,2){\line(1,0){4}}
\put(3,2){\line(0,1){4}}
\put(3,6){\line(1,0){4}}
\put(7,2){\line(0,1){4}}
\put(3.5,2.5){$\text{\small 1}$}
\put(13,12){\line(1,0){4}}
\put(13,12){\line(0,1){4}}
\put(13,16){\line(1,0){4}}
\put(17,12){\line(0,1){4}}
\put(16,15){$\text{\small 1}$}
\put(9,6.3){$\text{\tiny n}$}
\put(4,6.3){$\text{\tiny n}$}
\put(12.3,15){$\text{\tiny n}$}
\put(2.3,3){$\text{\tiny p}$}
\put(12.3,10){$\text{\tiny n}$}
\put(16,16.3){$\text{\tiny p}$}
\put(1,14){$i$}
\put(1,9){$j$}
\put(1,4){$l$}
\put(5,19){$i$}
\put(10,19){$j$}
\put(15,19){$l$}
\end{picture}
\eeq
\\
\\
For the case $l=i$, we pictorially have:
\beq
\setlength{\unitlength}{0.3cm}
\begin{picture}(18,15)
\put(6,0){\line(0,1){13}}
\put(12,0){\line(0,1){13}}
\linethickness{0.5mm}
\put(3,2){\line(0,1){7}}
\put(14,2){\line(0,1){7}}
\multiput(3,5)(0.5,0){22}{\line(1,0){0.2}}
\multiput(8,8)(1,0){3}{\circle*{0.2}}
\put(5,1){$\text{\small n}$}
\put(13,1){$\text{\small p}$}
\put(1,9){\circle{1.5}}
\put(0.7,8.7){$j$}
\put(16,9){\circle{1.5}}
\put(15.7,8.7){$i$}
\end{picture}
\eeq
\\
with the following matrix representation:
\beq
\setlength{\unitlength}{0.4cm}
\begin{picture}(14,14)
\put(7,10){\line(1,-1){4}}
\put(2,5){\line(1,-1){4}}
\thicklines
\put(2,5){\oval(2,12)[l]}
\put(12,5){\oval(2,12)[r]}
\put(0,3){$j$}
\put(0,8){$i$}
\put(4,12){$i$}
\put(9,12){$j$}
\put(2,1){\line(0,1){4}}
\put(2,1){\line(1,0){4}}
\put(2,5){\line(1,0){4}}
\put(6,1){\line(0,1){4}}
\put(1.5,1.8){$\text{\tiny p}$}
\put(5,4){$1$}
\put(1.5,4){$\text{\tiny n}$}
\put(5,5.3){$\text{\tiny p}$}
\put(3,5.3){$\text{\tiny n}$}
\put(7,6){\line(0,1){4}}
\put(7,6){\line(1,0){4}}
\put(7,10){\line(1,0){4}}
\put(11,6){\line(0,1){4}}
\put(8,6.8){$1$}
\put(6.5,6.8){$\text{\tiny p}$}
\put(6.5,9){$\text{\tiny n}$}
\put(10,10.3){$\text{\tiny p}$}
\put(8,10.3){$\text{\tiny n}$}
\end{picture}
\eeq
\\
\\
After a band sum move, we get the following diagram:
\beq
\setlength{\unitlength}{0.3cm}
\begin{picture}(18,13)
\put(6,0){\line(0,1){13}}
\put(12,0){\line(0,1){13}}
\linethickness{0.5mm}
\put(3,2){\line(0,1){2}}
\put(4,2){\line(0,1){2}}
\put(3,6){\line(0,1){3}}
\put(4,6){\line(0,1){3}}
\put(14,2){\line(0,1){7}}
\multiput(5,5)(0.5,0){18}{\line(1,0){0.2}}
\multiput(8,8)(1,0){3}{\circle*{0.2}}
\put(2,4){\line(1,0){3}}
\put(2,4){\line(0,1){2}}
\put(2,6){\line(1,0){3}}
\put(5,4){\line(0,1){2}}
\put(3,4.5){$\Delta$}
\put(5,1){$\text{\small n}$}
\put(13,1){$\text{\small p}$}
\put(5,9){\circle{1.5}}
\put(4.7,8.7){$i$}
\put(1,9){\circle{1.5}}
\put(0.7,8.7){$j$}
\put(13,9){\circle{1.5}}
\put(12.7,8.7){$i$}
\end{picture}
\eeq
\\
which equals:
\begin{align}
\setlength{\unitlength}{0.3cm}
\begin{picture}(18,11)(0,3)
\put(6,0){\line(0,1){13}}
\put(12,0){\line(0,1){13}}
\linethickness{0.5mm}
\put(3,2){\line(0,1){7}}
\put(4,2){\line(0,1){7}}
\put(14,2){\line(0,1){7}}
\multiput(4,5)(0.5,0){20}{\line(1,0){0.2}}
\multiput(8,8)(1,0){3}{\circle*{0.2}}
\put(5,1){$\text{\small n}$}
\put(13,1){$\text{\small p}$}
\put(5,9){\circle{1.5}}
\put(4.7,8.7){$i$}
\put(1,9){\circle{1.5}}
\put(0.7,8.7){$j$}
\put(16,9){\circle{1.5}}
\put(15.7,8.7){$i$}
\end{picture}+\setlength{\unitlength}{0.3cm}
\begin{picture}(18,11)(0,3)
\put(6,0){\line(0,1){13}}
\put(12,0){\line(0,1){13}}
\linethickness{0.5mm}
\put(3,2){\line(0,1){7}}
\put(4,2){\line(0,1){7}}
\put(14,2){\line(0,1){7}}
\multiput(3,5)(0.5,0){22}{\line(1,0){0.2}}
\multiput(8,8)(1,0){3}{\circle*{0.2}}
\put(5,1){$\text{\small n}$}
\put(13,1){$\text{\small p}$}
\put(5,9){\circle{1.5}}
\put(4.7,8.7){$i$}
\put(1,9){\circle{1.5}}
\put(0.7,8.7){$j$}
\put(16,9){\circle{1.5}}
\put(15.7,8.7){$i$}
\end{picture} \nonumber
\end{align}
\\ \\
\\
represented in this order by the sum of matrices:
\begin{align}
\setlength{\unitlength}{0.4cm}
\begin{picture}(14,9)(0,5)
\put(2,10){\line(1,-1){4}}
\thicklines
\put(2,5){\oval(2,12)[l]}
\put(12,5){\oval(2,12)[r]}
\put(0,3){$j$}
\put(0,8){$i$}
\put(4,12){$i$}
\put(9,12){$j$}
\put(2,6){\line(0,1){4}}
\put(2,6){\line(1,0){4}}
\put(2,10){\line(1,0){4}}
\put(6,6){\line(0,1){4}}
\put(3,6.8){$1$}
\put(1.5,6.8){$\text{\tiny p}$}
\put(5,9){$1$}
\put(1.5,9){$\text{\tiny n}$}
\put(5,10.3){$\text{\tiny p}$}
\put(3,10.3){$\text{\tiny n}$}
\end{picture}+
\setlength{\unitlength}{0.4cm}
\begin{picture}(14,9)(0,5)
\put(7,10){\line(1,-1){4}}
\put(2,5){\line(1,-1){4}}
\thicklines
\put(2,5){\oval(2,12)[l]}
\put(12,5){\oval(2,12)[r]}
\put(0,3){$j$}
\put(0,8){$i$}
\put(4,12){$i$}
\put(9,12){$j$}
\put(2,1){\line(0,1){4}}
\put(2,1){\line(1,0){4}}
\put(2,5){\line(1,0){4}}
\put(6,1){\line(0,1){4}}
\put(1.5,1.8){$\text{\tiny p}$}
\put(5,4){$1$}
\put(1.5,4){$\text{\tiny n}$}
\put(5,5.3){$\text{\tiny p}$}
\put(3,5.3){$\text{\tiny n}$}
\put(7,6){\line(0,1){4}}
\put(7,6){\line(1,0){4}}
\put(7,10){\line(1,0){4}}
\put(11,6){\line(0,1){4}}
\put(8,6.8){$1$}
\put(6.5,6.8){$\text{\tiny p}$}
\put(6.5,9){$\text{\tiny n}$}
\put(10,10.3){$\text{\tiny p}$}
\put(8,10.3){$\text{\tiny n}$}
\end{picture} \nonumber
\end{align}\\ \\
\\ \\
\\ \\
\\
combining into:
\beq
\setlength{\unitlength}{0.4cm}
\begin{picture}(14,14)
\put(2,10){\line(1,-1){4}}
\put(7,10){\line(1,-1){4}}
\put(2,5){\line(1,-1){4}}
\thicklines
\put(2,5){\oval(2,12)[l]}
\put(12,5){\oval(2,12)[r]}
\put(0,3){$j$}
\put(0,8){$i$}
\put(4,12){$i$}
\put(9,12){$j$}
\put(2,1){\line(0,1){4}}
\put(2,1){\line(1,0){4}}
\put(2,5){\line(1,0){4}}
\put(6,1){\line(0,1){4}}
\put(1.5,1.8){$\text{\tiny p}$}
\put(5,4){$1$}
\put(1.5,4){$\text{\tiny n}$}
\put(5,5.3){$\text{\tiny p}$}
\put(3,5.3){$\text{\tiny n}$}
\put(2,6){\line(0,1){4}}
\put(2,6){\line(1,0){4}}
\put(2,10){\line(1,0){4}}
\put(6,6){\line(0,1){4}}
\put(3,6.8){$1$}
\put(1.5,6.8){$\text{\tiny p}$}
\put(5,9){$1$}
\put(1.5,9){$\text{\tiny n}$}
\put(5,10.3){$\text{\tiny p}$}
\put(3,10.3){$\text{\tiny n}$}
\put(7,6){\line(0,1){4}}
\put(7,6){\line(1,0){4}}
\put(7,10){\line(1,0){4}}
\put(11,6){\line(0,1){4}}
\put(8,6.8){$1$}
\put(6.5,6.8){$\text{\tiny p}$}
\put(6.5,9){$\text{\tiny n}$}
\put(10,10.3){$\text{\tiny p}$}
\put(8,10.3){$\text{\tiny n}$}
\end{picture}
\eeq
\\

We present now our main result:
\begin{Thm}
For a $q$-components link $L$ on which the handle corresponding to the $i$-th component $K_i$ is sliding over the handle corresponding to the $j$-th component $K_j$, the induced map on $\widetilde{Z}_f$ is denoted by $\mathbf{h}_{ij}$ and is defined by:
\beq
\mathbf{h}_{ij}\widetilde{Z}_f(L)=\sum_{m \geq 0} \sum_{\substack{|I|=|J|=m \\ |U|=|V|=m}} c_{IJUV} \stck_{1 \leq a \leq m}M_{ij} ^T A_{i_a,j_a,u_a,v_a}M_{ij}
\eeq
where $M_{ij}$ is a $qN\times qN$ matrix with ones on its diagonal and the $ji$ block is the $N\times N$ identity matrix $I_N$. For $i<j$, we write such a matrix as:
\beq
\setlength{\unitlength}{0.3cm}
\begin{picture}(14,14)
\put(2,10){$1$}
\put(3,9){$1$}
\put(6,6){\circle*{0.2}}
\put(7,5){\circle*{0.2}}
\put(8,4){\circle*{0.2}}
\put(11,1){$1$}
\thicklines
\put(2,5){\oval(2,12)[l]}
\put(12,5){\oval(2,12)[r]}
\put(0,3){$j$}
\put(0,8){$i$}
\put(4,12){$i$}
\put(9,12){$j$}
\put(2,1){\line(0,1){4}}
\put(2,1){\line(1,0){4}}
\put(2,5){\line(1,0){4}}
\put(6,1){\line(0,1){4}}
\put(3.5,2.5){$\text{\Large I}_N$}
\end{picture}
\eeq
\\
\\
If we define:
\beq
M_{ij}^m:=\stck_{1 \leq a \leq m}M_{ij}
\eeq
acting on books of $m$ pages chord-wise, then we can rewrite the above formula in compact form as:
\beq
\mathbf{h}_{ij}\widetilde{Z}_f(L)=\sum_{m \geq 0} \sum_{\substack{|I|=|J|=m \\ |U|=|V|=m}} c_{IJUV} M_{ij}^{m\;T} A_{IJUV}M_{ij}^m
\eeq
We may even generalize this further by defining $M_{ij}^{\centerdot}$ to the product of as many matrices $M_{ij}$ as the book they operate on have pages, which leads to having the even simpler formula:
\beq
\mathbf{h}_{ij}\widetilde{Z}_f(L)=M^{\centerdot \;T} \left(\widetilde{Z}_f(L) \right) M^{\centerdot}
\eeq
\end{Thm}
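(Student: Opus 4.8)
The plan is to reduce the identity $\mathbf{h}_{ij}\widetilde{Z}_f(L)=\widetilde{Z}_f(h_{ij}L)$ to a single-page statement and then settle that by a short block computation. The input is Theorem \ref{Goodbehavior}: writing $\widetilde{Z}_f(L)=\sum_X c_X X$, one has $\widetilde{Z}_f(h_{ij}L)=\sum_X c_X X'$, where $X'$ is obtained from $X$ by performing the band sum — doubling the arc of $K_i$ that runs along $K_j$, so that $K_j$ acquires a parallel copy which becomes part of $K_i$ — and then doubling every chord meeting that copy by the rule \eqref{dbl}. By isotopy invariance of $\widetilde{Z}_f$ and the Long Chords Lemma I would arrange the band sum to create no new local extrema and to be supported in a region meeting no chord of $X$ and no other component; then the vertical slicing of $\widetilde{L'}$ is that of $\widetilde{L}$ with the new strand inheriting, strip for strip, the labels of $K_j$. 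That last normalization is precisely why the $(j,i)$-block of the congruence matrix $M_{ij}$ is $I_N$.

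First I would reduce to one chord. By the multiplicativity $\Delta(\text{stack of chords})=\Delta(\cdots)\times\cdots\times\Delta(\cdots)$ recalled in the text, together with the page-splitting identity \eqref{ABC}, the chord-doubling that turns $X$ into $X'$ may be performed one page at a time, and the outcome is the stack of pages obtained by doubling each page of $X$ separately and then splitting the results into single-chord pages. Since conjugation $A\mapsto M_{ij}^{T}AM_{ij}$ is linear and $M_{ij}^{m}$ and $M^{\centerdot}$ act page-wise by definition, it then suffices to prove the one-chord statement: for a single page $A_{s,t,n,p}$, the matrix $M_{ij}^{T}A_{s,t,n,p}M_{ij}$, once split into single-chord pages, is exactly the book of the chord obtained by doubling. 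Granting this, the theorem follows by linearity, and the compact reformulations $\sum_{m}\sum c_{IJUV}M_{ij}^{m\,T}A_{IJUV}M_{ij}^{m}=M^{\centerdot\,T}\big(\widetilde{Z}_f(L)\big)M^{\centerdot}$ are merely unwindings of notation.

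Next I would carry out the block computation. Write $M_{ij}=\mathbf{1}_{qN}+E$, where $E$ is the block matrix whose $(j,i)$-block is $I_N$ and whose other blocks vanish, so that $M_{ij}^{T}A M_{ij}=A+E^{T}A+AE+E^{T}AE$; here $AE$ copies the $j$-th block-column of $A$ into its $i$-th block-column, $E^{T}A$ copies the $j$-th block-row into the $i$-th block-row, and $E^{T}AE$ places the $(j,j)$-block of $A$ into the $(i,i)$-block. For a single page (one symmetric entry $1$ in the $(s,t)$- and $(t,s)$-blocks) there are four cases. If neither $s$ nor $t$ equals $j$, all three correction terms vanish and $M_{ij}^{T}AM_{ij}=A$, in accord with such a chord being left alone by the band sum. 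The remaining three cases reproduce, in order, the situations computed before the theorem: (ii) $s=t=j$, where all corrections contribute and $M_{ij}^{T}AM_{ij}$ splits into four single-chord pages — one $K_j$--$K_j$ chord, two $K_i$--$K_j$ chords, one $K_i$--$K_i$ chord — the first ``combining into'' picture; (iii) one of $s,t$ equal to $j$ and the other to some $l\neq i,j$, where $E^{T}AE=0$ and $M_{ij}^{T}AM_{ij}$ splits as the original $K_j$--$K_l$ chord plus its copy $K_i$--$K_l$, the second picture; (iv) one of $s,t$ equal to $j$ and the other to $i$, where again $E^{T}AE=0$ and $M_{ij}^{T}AM_{ij}$ splits as the original $K_i$--$K_j$ chord together with a new chord having both feet on $K_i$ at strips $n$ and $p$, the last picture. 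In each case one reads off that the split coincides with the book of the doubled chord, which completes the reduction. The standing assumption $i<j$, made so that blocks appear in the drawn order, costs nothing: $i>j$ follows by the evident permutation of the block basis, under which $M_{ij}$, $A$ and $M_{ij}^{T}AM_{ij}$ transform compatibly.

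The step I expect to be the real obstacle is the first reduction: establishing that the book of $\widetilde{L'}$ is literally the page-wise image of the book of $\widetilde{L}$. This requires checking that the band sum can be isotoped clear of every chord and every local extremum, so that no page is created or destroyed and the $\nu,\nu^{-1}$ corrections built into $\widetilde{Z}_f$ are unaffected — for which it helps that Theorem \ref{Goodbehavior} already absorbs these corrections into its statement — and that the slicing of $\widetilde{L'}$ can be chosen so the doubled strand carries exactly the strip labels of $K_j$. Once those normalizations are pinned down, only the finite matrix check above remains, and the two compact forms of $\mathbf{h}_{ij}$ are immediate from the definitions of $M_{ij}^m$ and $M^{\centerdot}$.
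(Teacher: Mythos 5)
Your proposal is correct and follows essentially the same route as the paper: the paper's proof likewise observes that only chords with a foot on $K_j$ are affected, reduces to the two (three) single-page cases displayed before the theorem, and verifies each by elementary matrix multiplication against the ``combining into'' pictures. Your decomposition $M_{ij}=\mathbf{1}_{qN}+E$ and your explicit attention to the page-wise reduction and to the slicing of $\widetilde{L'}$ are a somewhat more systematic write-up of the same computation, not a different argument.
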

\begin{proof}
The only cases that need to be studied are those where a chord has a foot on the $j$-th component, since all other tangle chord diagrams are left invariant by $\mathbf{h}_{ij}$, and of those there are only two kinds that we presented just before the statement of the Theorem. By elementary matrix multiplication, we have in the first case, for one page:\\
\beq
\setlength{\unitlength}{0.3cm}
\begin{picture}(21,14)(-2,0)
\put(2,10){\line(1,-1){5}}
\put(7,5){\line(1,-1){4}}
\put(11,1){\line(1,-1){1}}
\put(-4,0){$\text{\huge M}_{\text{\Large ij}}^{\text{\large T}}$}
\put(14.5,0){$\text{\huge M}_{\text{\Large ij}}$}
\thicklines
\put(2,5){\oval(2,12)[l]}
\put(13,5){\oval(2,12)[r]}
\put(0,3){$j$}
\put(0,8){$i$}
\put(4,12){$i$}
\put(9,12){$j$}
\put(7,1){\line(0,1){4}}
\put(7,1){\line(1,0){4}}
\put(7,5){\line(1,0){4}}
\put(11,1){\line(0,1){4}}
\put(8,1.8){$\text{\small 1}$}
\put(6.2,1.8){$\text{\tiny p}$}
\put(10,4){$\text{\small 1}$}
\put(6.2,4){$\text{\tiny n}$}
\put(10,5.3){$\text{\tiny p}$}
\put(8,5.3){$\text{\tiny n}$}
\end{picture} =
\setlength{\unitlength}{0.4cm}
\begin{picture}(14,13)
\put(2,10){\line(1,-1){4}}
\put(7,5){\line(1,-1){4}}
\put(7,10){\line(1,-1){4}}
\put(2,5){\line(1,-1){4}}
\thicklines
\put(2,5){\oval(2,12)[l]}
\put(12,5){\oval(2,12)[r]}
\put(0,3){$j$}
\put(0,8){$i$}
\put(4,12){$i$}
\put(9,12){$j$}
\put(7,1){\line(0,1){4}}
\put(7,1){\line(1,0){4}}
\put(7,5){\line(1,0){4}}
\put(11,1){\line(0,1){4}}
\put(8,1.8){$1$}
\put(6.5,1.8){$\text{\tiny p}$}
\put(10,4){$1$}
\put(6.5,4){$\text{\tiny n}$}
\put(10,5.3){$\text{\tiny p}$}
\put(8,5.3){$\text{\tiny n}$}
\put(2,1){\line(0,1){4}}
\put(2,1){\line(1,0){4}}
\put(2,5){\line(1,0){4}}
\put(6,1){\line(0,1){4}}
\put(3,1.8){$1$}
\put(1.5,1.8){$\text{\tiny p}$}
\put(5,4){$1$}
\put(1.5,4){$\text{\tiny n}$}
\put(5,5.3){$\text{\tiny p}$}
\put(3,5.3){$\text{\tiny n}$}
\put(2,6){\line(0,1){4}}
\put(2,6){\line(1,0){4}}
\put(2,10){\line(1,0){4}}
\put(6,6){\line(0,1){4}}
\put(3,6.8){$1$}
\put(1.5,6.8){$\text{\tiny p}$}
\put(5,9){$1$}
\put(1.5,9){$\text{\tiny n}$}
\put(5,10.3){$\text{\tiny p}$}
\put(3,10.3){$\text{\tiny n}$}
\put(7,6){\line(0,1){4}}
\put(7,6){\line(1,0){4}}
\put(7,10){\line(1,0){4}}
\put(11,6){\line(0,1){4}}
\put(8,6.8){$1$}
\put(6.5,6.8){$\text{\tiny p}$}
\put(10,9){$1$}
\put(6.5,9){$\text{\tiny n}$}
\put(10,10.3){$\text{\tiny p}$}
\put(8,10.3){$\text{\tiny n}$}
\end{picture} \nonumber
\eeq
\\
\\
which is what we expected from the above considerations. In the second case of interest, if $l \neq i$:
\begin{align}
\setlength{\unitlength}{0.3cm}
\begin{picture}(23,19)(0,2)
\put(8,6){\line(1,-1){4}}
\put(13,11){\line(1,-1){4}}
\put(-3,2){$\text{\huge M}_{\text{\Large ij}}^{\text{\large T}}$}
\put(18.5,2){$\text{\huge M}_{\text{\Large ij}}$}
\thicklines
\put(3,9){\oval(2,18)[l]}
\put(17,9){\oval(2,18)[r]}
\put(8,2){\line(1,0){4}}
\put(8,2){\line(0,1){4}}
\put(8,6){\line(1,0){4}}
\put(12,2){\line(0,1){4}}
\put(8.5,2.5){$1$}
\put(13,7){\line(1,0){4}}
\put(13,7){\line(0,1){4}}
\put(13,11){\line(1,0){4}}
\put(17,7){\line(0,1){4}}
\put(16,10){$1$}
\put(9,6.3){$\text{\tiny n}$}
\put(7.3,3){$\text{\tiny p}$}
\put(12.3,10){$\text{\tiny n}$}
\put(16,11.3){$\text{\tiny p}$}
\put(1,14){$i$}
\put(1,9){$j$}
\put(1,4){$l$}
\put(5,19){$i$}
\put(10,19){$j$}
\put(15,19){$l$}
\end{picture} \nonumber \\
\setlength{\unitlength}{0.3cm}
\begin{picture}(19,21)(0,2)
\put(8,6){\line(1,-1){4}}
\put(13,11){\line(1,-1){4}}
\put(3,6){\line(1,-1){4}}
\put(13,16){\line(1,-1){4}}
\put(-2,9){$\text{\Huge =}$}
\thicklines
\put(3,9){\oval(2,18)[l]}
\put(17,9){\oval(2,18)[r]}
\put(8,2){\line(1,0){4}}
\put(8,2){\line(0,1){4}}
\put(8,6){\line(1,0){4}}
\put(12,2){\line(0,1){4}}
\put(8.5,2.5){$1$}
\put(13,7){\line(1,0){4}}
\put(13,7){\line(0,1){4}}
\put(13,11){\line(1,0){4}}
\put(17,7){\line(0,1){4}}
\put(16,10){$1$}
\put(3,2){\line(1,0){4}}
\put(3,2){\line(0,1){4}}
\put(3,6){\line(1,0){4}}
\put(7,2){\line(0,1){4}}
\put(3.5,2.5){$1$}
\put(13,12){\line(1,0){4}}
\put(13,12){\line(0,1){4}}
\put(13,16){\line(1,0){4}}
\put(17,12){\line(0,1){4}}
\put(16,15){$1$}
\put(9,6.3){$\text{\tiny n}$}
\put(4,6.3){$\text{\tiny n}$}
\put(12.3,15){$\text{\tiny n}$}
\put(2.3,3){$\text{\tiny p}$}
\put(12.3,10){$\text{\tiny n}$}
\put(16,16.3){$\text{\tiny p}$}
\put(1,14){$i$}
\put(1,9){$j$}
\put(1,4){$l$}
\put(5,19){$i$}
\put(10,19){$j$}
\put(15,19){$l$}
\end{picture}
\end{align}
\\ \\
\\
\\
\\
which is what was expected. In case $l=i$:
\begin{align}
\setlength{\unitlength}{0.4cm}
\begin{picture}(16,14)
\put(2,5){\line(1,-1){4}}
\put(7,10){\line(1,-1){4}}
\put(-3,0){$\text{\huge M}_{\text{\Large ij}}^{\text{\large T}}$}
\put(14.5,0){$\text{\huge M}_{\text{\Large ij}}$}
\thicklines
\put(2,5){\oval(2,12)[l]}
\put(13,5){\oval(2,12)[r]}
\put(0,3){$j$}
\put(0,8){$i$}
\put(4,12){$i$}
\put(9,12){$j$}
\put(0,3){$j$}
\put(0,8){$i$}
\put(4,12){$i$}
\put(9,12){$j$}
\put(2,1){\line(0,1){4}}
\put(2,1){\line(1,0){4}}
\put(2,5){\line(1,0){4}}
\put(6,1){\line(0,1){4}}
\put(1.5,1.8){$\text{\tiny p}$}
\put(5,4){$1$}
\put(1.5,4){$\text{\tiny n}$}
\put(5,5.3){$\text{\tiny p}$}
\put(3,5.3){$\text{\tiny n}$}
\put(7,6){\line(0,1){4}}
\put(7,6){\line(1,0){4}}
\put(7,10){\line(1,0){4}}
\put(11,6){\line(0,1){4}}
\put(8,6.8){$1$}
\put(6.5,6.8){$\text{\tiny p}$}
\put(6.5,9){$\text{\tiny n}$}
\put(10,10.3){$\text{\tiny p}$}
\put(8,10.3){$\text{\tiny n}$}
\end{picture}  \nonumber \\
\setlength{\unitlength}{0.4cm}
\begin{picture}(14,7)(0,7)
\put(2,10){\line(1,-1){4}}
\put(7,10){\line(1,-1){4}}
\put(2,5){\line(1,-1){4}}
\put(-2,5){$\text{\Huge =}$}
\thicklines
\put(2,5){\oval(2,12)[l]}
\put(12,5){\oval(2,12)[r]}
\put(0,3){$j$}
\put(0,8){$i$}
\put(4,12){$i$}
\put(9,12){$j$}
\put(2,1){\line(0,1){4}}
\put(2,1){\line(1,0){4}}
\put(2,5){\line(1,0){4}}
\put(6,1){\line(0,1){4}}
\put(1.5,1.8){$\text{\tiny p}$}
\put(5,4){$1$}
\put(1.5,4){$\text{\tiny n}$}
\put(5,5.3){$\text{\tiny p}$}
\put(3,5.3){$\text{\tiny n}$}
\put(2,6){\line(0,1){4}}
\put(2,6){\line(1,0){4}}
\put(2,10){\line(1,0){4}}
\put(6,6){\line(0,1){4}}
\put(3,6.8){$1$}
\put(1.5,6.8){$\text{\tiny p}$}
\put(5,9){$1$}
\put(1.5,9){$\text{\tiny n}$}
\put(5,10.3){$\text{\tiny p}$}
\put(3,10.3){$\text{\tiny n}$}
\put(7,6){\line(0,1){4}}
\put(7,6){\line(1,0){4}}
\put(7,10){\line(1,0){4}}
\put(11,6){\line(0,1){4}}
\put(8,6.8){$1$}
\put(6.5,6.8){$\text{\tiny p}$}
\put(6.5,9){$\text{\tiny n}$}
\put(10,10.3){$\text{\tiny p}$}
\put(8,10.3){$\text{\tiny n}$}
\end{picture} \nonumber
\end{align}
\\
\\
\\
\\
\\
\\
\\
This completes the proof.
\end{proof}

\section{Isotopy invariance of $\widetilde{Z}_f$ in book notation}
In \cite{RG} we proved that $\hat{Z}_f$ is an isotopy invariant. By definition, so is $\widetilde{Z}_f$. If we consider $\widetilde{Z}_f(L)$ as just a formal sum of books with complex coefficients however, we have a discretization of tangle chord diagrams by books that is no longer isotopy invariant. We use the fact that ambient isotopic links are in the same class if and only if they are related by the Reidemeister moves ~\cite{Rei} ~\cite{Y}. Instead of studying the behavior of $\widetilde{Z}_f(L)$ under any arbitrary isotopy, we study Reidemeister moves of tangles. They are pictured as follows:\\
\setlength{\unitlength}{0.3cm}
\begin{picture}(19,7)(-12,2)
\put(1,3){$\Delta.\pi.1$}
\thicklines
\put(6,0){\line(0,1){4}}
\put(6,4){\line(1,-1){2}}
\put(8,2){\line(0,1){4}}
\put(11,3){\vector(-1,0){2}}
\put(9,3){\vector(1,0){2}}
\put(12,0){\line(0,1){6}}
\put(15,3){\vector(-1,0){2}}
\put(13,3){\vector(1,0){2}}
\put(16,2){\line(0,1){4}}
\put(16,2){\line(1,1){2}}
\put(18,0){\line(0,1){4}}
\end{picture}\\ \\
\\
\\
\setlength{\unitlength}{0.3cm}
\begin{picture}(21,7)(-12,2)
\put(0,3){$\Delta.\pi.2$}
\thicklines
\put(3,0){\line(1,1){1.5}}
\put(5.5,2.5){\line(1,1){1.5}}
\put(5,0){\line(0,1){6}}
\put(7,4){\line(1,-2){2}}
\put(12,3){\vector(-1,0){2}}
\put(10,3){\vector(1,0){2}}
\put(13,0){\line(1,1){4}}
\put(17,4){\line(1,-2){0.8}}
\put(16,0){\line(1,1){4}}
\put(20,4){\line(-2,1){3}}
\put(19,0){\line(-1,2){0.8}}
\end{picture}\\ \\
\\
\setlength{\unitlength}{0.3cm}
\begin{picture}(19,7)(-12,2)
\put(1,4){$\Omega.1.f$}
\thicklines
\put(6,2){\line(0,1){4}}
\put(6,6){\line(1,-1){1.5}}
\put(8.5,3.5){\line(1,-1){1.5}}
\put(6,2){\line(1,1){4}}
\put(13,4){\vector(-1,0){2}}
\put(11,4){\vector(1,0){2}}
\put(14,2){\line(1,1){4}}
\put(14,6){\line(1,-1){1.5}}
\put(16.5,3.5){\line(1,-1){1.5}}
\put(18,2){\line(0,1){4}}
\end{picture}\\ \\
\\
which is the framed version of the original $\Omega.1$ Reidemeister move.\\
\setlength{\unitlength}{0.3cm}
\begin{picture}(15,9)(-15,2)
\put(0,4){$\Omega.2$}
\thicklines
\put(4,0){\line(0,1){8}}
\put(6,0){\line(0,1){8}}
\put(9,4){\vector(-1,0){2}}
\put(7,4){\vector(1,0){2}}
\put(10,0){\line(1,1){4}}
\put(10,8){\line(1,-1){4}}
\put(12,0){\line(0,1){1.5}}
\put(12,2.5){\line(0,1){3}}
\put(12,6.5){\line(0,1){1.5}}
\end{picture}\\ \\
\\
\setlength{\unitlength}{0.3cm}
\begin{picture}(24,9)(-11,2)
\put(0,4){$\Omega.3$}
\thicklines
\put(3,0){\line(1,1){1.5}}
\put(5.5,2.5){\line(1,1){5.5}}
\put(3,8){\line(1,-1){1.5}}
\put(5,0){\line(0,1){8}}
\put(5.5,5.5){\line(1,-1){1.2}}
\put(7.5,3.5){\line(1,-1){2.8}}
\put(14,4){\vector(-1,0){2}}
\put(12,4){\vector(1,0){2}}
\put(14,0){\line(1,1){5.8}}
\put(20.5,6.5){\line(1,1){2}}
\put(17,0){\line(3,2){6}}
\put(17,8){\line(3,-2){6}}
\put(14,8){\line(1,-1){3.5}}
\put(18.5,3.5){\line(1,-1){1.2}}
\put(20.5,1.5){\line(1,-1){1.2}}
\end{picture}\\ \\
\\
\begin{Iso1}
The book representation of $\widetilde{Z}_f(L)$ of any link $L$ is invariant under the Reidemeister moves $\Omega.2$ and $\Omega.3$.
\end{Iso1}
\begin{proof}
In both cases we have an equivalence of two tangle diagrams which yield the same page for any chord ending on them after possibly moving the chord up the tangle.
\end{proof}
Those are the only moves that leave $\widetilde{Z}_f(L)$ invariant. For the other Reidemeister moves, there are a few changes to be performed on each page.

\subsection{Behavior of $\widetilde{Z}_f$ under $\Delta.\pi.1$}
In a slice presentation of knots, and using the isotopy invariance of $\widetilde{Z}_f$, it suffices to consider strands other than those involved in the move $\Delta.\pi.1$ to be vertical, and the straightened out strand involved in $\Delta.\pi.1$ to be straight as well. For instance, one of the moves involved in $\Delta.\pi.1$ would look like:
\beq
\setlength{\unitlength}{0.3cm}
\begin{picture}(14,5)(0,3)
\thicklines
\put(0,0){\line(0,1){6}}
\put(3,0){\line(0,1){6}}
\put(9,0){\line(0,1){6}}
\put(12,0){\line(0,1){6}}
\multiput(1,3)(0.5,0){3}{\circle*{0.2}}
\multiput(10,3)(0.5,0){3}{\circle*{0.2}}
\put(6,0){\line(0,1){2}}
\put(5.5,2){\oval(1,2)[t]}
\put(4.5,2){\oval(1,2)[b]}
\put(5,2){\oval(2,4)[tl]}
\put(5,6){\oval(2,4)[br]}
\end{picture}
\longrightarrow
\setlength{\unitlength}{0.3cm}
\begin{picture}(14,5)(-2,3)
\thicklines
\put(0,0){\line(0,1){6}}
\put(3,0){\line(0,1){6}}
\put(9,0){\line(0,1){6}}
\put(12,0){\line(0,1){6}}
\multiput(1,3)(0.5,0){3}{\circle*{0.2}}
\multiput(10,3)(0.5,0){3}{\circle*{0.2}}
\put(6,0){\line(0,1){6}}
\end{picture}
\label{htos}
\eeq
\\
\\
In the expression for $\hat{Z}_f$, powers of $\nu$ cancel self-chords from the Kontsevich integral of the hump. Chords with one foot on the hump however are not canceled, whereas in the expression for $\hat{Z}_f$ of the right hand side of ~\eqref{htos}, all of whose strands are vertical, there are no such chords. This is in particular true of $\widetilde{Z}_f$. Moreover mixed chords between self chords on the hump and long chords with one foot on the hump are not canceled by $\nu$. They are canceled by squeezing the hump into a window of vanishing width. For each long chord with a foot on an ascending (resp. descending) part of the hump, there is a long chord with a foot on a descending (resp. ascending) part of the hump, each tangle chord diagram with log differentials that differ in sign and therefore cancel each other off. This is how we have all long chords ending on the hump go.
Thus we seek a transformation on the expression for $\widetilde{Z}_f$ of the left hand side of ~\eqref{htos} that effectively gets rid of chords with a foot on the hump.
\begin{Zhumpvert}
The $qN \times q(N-8)$ matrix $\text{diag}(M_{\text{hump}\mapsto |})$ with:
\beq
\text{\Large M}_{\text{hump} \mapsto |}=
\setlength{\unitlength}{0.4cm}
\begin{picture}(10,9)(-1,3)
\put(1,7){\line(1,0){3}}
\put(4,7){\line(0,1){3}}
\put(4,0){\line(0,1){4}}
\put(4,4){\line(1,0){4}}
\put(1,6.8){\line(1,0){7}}
\put(1,4.2){\line(1,0){7}}
\thicklines
\put(1,5){\oval(2,10)[l]}
\put(8,5){\oval(2,10)[r]}
\put(2,8){$I_n$}
\put(5,2){$I_{N-n-8}$}
\put(3,5){$O_{8 \times (N-8)}$}
\end{picture} \label{starr1}
\eeq
\\ \\
implements the move ~\eqref{htos}. In case the hump is on the first strand from the left:
\beq
\text{\Large M}_{\text{hump} \mapsto |}=
\setlength{\unitlength}{0.4cm}
\begin{picture}(10,9)(-1,3)
\put(1,6.2){\line(1,0){7}}
\thicklines
\put(1,5){\oval(2,10)[l]}
\put(8,5){\oval(2,10)[r]}
\put(3,2){$I_{N-8}$}
\put(3,7){$O_{8 \times (N-8)}$}
\end{picture}
\eeq
\\
\\
If we let $I_0$ be the empty matrix then \eqref{starr1} implements the move \eqref{htos} for all $n \geq 0$.
\end{Zhumpvert}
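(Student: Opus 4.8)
The plan is to reduce the statement to the fact, established in the discussion preceding it, that no book occurring with nonzero coefficient in $\widetilde{Z}_f$ of the left-hand side of \eqref{htos} has a chord foot in one of the eight strips cut out around the hump, and then to observe that the congruence $A\mapsto M^{T}AM$ by $M:=\text{diag}(M_{\text{hump}\mapsto|})$ merely deletes those eight strips from every page and relabels the survivors.

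First I would record what I will call Step 1. Since the powers of $\nu$ in $\widetilde{Z}_f$ cancel every self-chord on the hump, since each long chord with a foot on an ascending portion of the hump is cancelled against the one with a foot on the corresponding descending portion (their integrands differing only in sign), since the remaining mixed terms vanish as the hump is squeezed to a window of zero width, and since by isotopy invariance we may take \eqref{htos} to be performed in a region through which no other strand of $L$ passes, the eight hump strips contain, in every component block, no chord foot of any surviving book. Writing $n+1,\dots,n+8$ for the hump-strip labels, this means
\[
\widetilde{Z}_f(L)=\sum_{m\ge 0}\ \sum_{\substack{|I|=|J|=|U|=|V|=m\\ u_a,v_a\notin\{n+1,\dots,n+8\}\ \text{for all }a}} c_{IJUV}\,A_{IJUV}.
\]

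Next I would carry out the matrix bookkeeping. From the block shape in \eqref{starr1}, for a $qN\times qN$ matrix $A=(A_{st})_{1\le s,t\le q}$ one has $(M^{T}AM)_{st}=M_{\text{hump}\mapsto|}^{T}A_{st}M_{\text{hump}\mapsto|}$, and since $M_{\text{hump}\mapsto|}$ is, up to the indicated block ordering, the $N\times(N-8)$ matrix carrying the identity on strips $1,\dots,n$ and on $n+9,\dots,N$ and zero on $n+1,\dots,n+8$, this congruence deletes rows and columns $n+1,\dots,n+8$ of $A_{st}$ and renumbers the survivors consecutively, old strip $n+8+k$ becoming new strip $n+k$; with $I_0$ the empty matrix this also covers the $n=0$ form. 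Applied to a page $A_{i_a,j_a,u_a,v_a}$ with $u_a,v_a\notin\{n+1,\dots,n+8\}$ it yields again a page, with the entry $1$ moved to the renumbered position. I would then check that, if $L'$ denotes the result of \eqref{htos} sliced by the procedure of Section 3, removing the hump deletes exactly the eight strips cut out around it and leaves all other slicing lines in place, so that the canonical slicing of $L'$ is obtained from that of $L$ by precisely this deletion-and-renumbering; hence $\stck_{1\le a\le m}M_{\text{hump}\mapsto|}^{T}A_{i_a,j_a,u_a,v_a}M_{\text{hump}\mapsto|}$ is the book $A_{IJU'V'}$ of the corresponding tangle chord diagram on $L'$, with $U',V'$ the renumbered strip multi-indices. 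Finally, since \eqref{htos} is a framed isotopy, $\widetilde{Z}_f(L)=\widetilde{Z}_f(L')$ in $\overline{\mathcal{A}}(\amalg^{q}S^{1})$, and the isotopy matches each surviving tangle chord diagram on $L$ with one on $L'$ representing the same abstract chord diagram, so $c_{IJUV}=c_{IJU'V'}$; combining this with Step 1 and the displayed computation gives that $\text{diag}(M_{\text{hump}\mapsto|})$, applied page by page to $\widetilde{Z}_f(L)$, equals $\widetilde{Z}_f(L')$, which is the assertion.

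The hard part will not be the block multiplication but the two compatibility claims around it: that the slicing rules of Section 3 always assign exactly $8$ strips to a hump, independently of its position $n$, and that after deletion the renumbered slicing of $L$ is literally the canonical slicing of $L'$, so that the books on the right-hand side are the ones genuinely occurring in $\widetilde{Z}_f(L')$ and not those of some isotopic re-slicing. I would settle this by applying the slicing rules of Section 3 to the local extrema of the hump, checking that the two outermost dividing lines coincide with the dividing lines of the neighbouring strands (which persist through \eqref{htos} unchanged) and that exactly eight strips lie between them regardless of $n$.
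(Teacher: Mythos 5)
Your argument follows the paper's own proof in all essentials: the cancellation discussion preceding the proposition disposes of every chord with a foot on the hump, the slicing convention assigns exactly eight strips to the hump (labelled $n+1,\dots,n+8$ in the paper's picture), and the block congruence by $\text{diag}(M_{\text{hump}\mapsto|})$ annihilates entries in those strips while relabelling the survivors, with chords on other strands already pushed up into strips $1,\dots,n$ or $n+9,\dots,N$ and hence untouched. The two compatibility checks you single out as the hard part (that the deleted-and-renumbered slicing is literally the canonical slicing of the straightened tangle, and that the surviving coefficients match by isotopy invariance of $\widetilde{Z}_f$) are left implicit in the paper but are consistent with, and a reasonable tightening of, its reasoning.
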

\begin{proof}
First observe that since we have a local max on the hump, the skeleton for both $\hat{Z}_f$ and $\widetilde{Z}_f$ is as follows, with added vertical slices for numbering purposes:
\beq
\setlength{\unitlength}{0.3cm}
\begin{picture}(16,27)
\put(-1,-1){\line(0,1){26}}
\put(3,-1){\line(0,1){26}}
\put(5,-1){\line(0,1){26}}
\put(6,-1){\line(0,1){26}}
\put(8,-1){\line(0,1){26}}
\put(9.8,-1){\line(0,1){26}}
\put(11,-1){\line(0,1){26}}
\put(12,-1){\line(0,1){26}}
\put(15,-1){\line(0,1){26}}
\put(-2,-2){\text{\tiny n}}
\put(0,-2){\text{\tiny n+1}}
\put(3,-2){\text{\tiny n+2}}
\put(5,-2){\text{\tiny n+3}}
\put(6,-3){\text{\tiny n+4}}
\put(8,-2){\text{\tiny n+5}}
\put(10,-3){\text{\tiny n+6}}
\put(11,-2){\text{\tiny n+7}}
\put(13,-2){\text{\tiny n+8}}
\put(16,-2){\text{\tiny n+9}}
\thicklines
\qbezier(6,9)(14,14)(6,5)
\qbezier(6,5)(1,2)(0,8)
\qbezier(0,8)(1,14)(11,19)
\qbezier(11,19)(14,19)(14,24)
\qbezier(6,9)(5,10)(12,14)
\qbezier(12,14)(13,11)(14,0)
\end{picture}
\label{slicehtos}
\eeq
\\
\\
\\
\\
The straightened out strand on the right hand side of ~\eqref{htos} is in the $(n+1)$-st strip. The matrix that takes care of relabeling all the strips under the map depicted in ~\eqref{htos} and kills all the chords on the hump is, for the block corresponding to the component on which the hump is located:
\beq
\text{\Large M}_{\text{hump} \mapsto |}=
\setlength{\unitlength}{0.4cm}
\begin{picture}(10,9)(-1,3)
\put(1,7){\line(1,0){3}}
\put(4,7){\line(0,1){3}}
\put(4,0){\line(0,1){4}}
\put(4,4){\line(1,0){4}}
\put(1,6.8){\line(1,0){7}}
\put(1,4.2){\line(1,0){7}}
\thicklines
\put(1,5){\oval(2,10)[l]}
\put(8,5){\oval(2,10)[r]}
\put(2,8){$I_n$}
\put(5,2){$I_{N-n-8}$}
\put(3,5){$O_{8 \times (N-8)}$}
\end{picture} \label{star1}
\eeq
\\ \\
It is a $N \times (N-8)$ matrix. In other horizontal slices, chords ending on strands other than the one with the hump are moved up to be either in strips indexed $1$ through $n$, or $n+9$ through $N$. Therefore the above matrix also takes care of the transformation of those chords under the move ~\eqref{htos}. We conclude that the $qN \times q(N-8)$ matrix $\text{diag}(M_{\text{\tiny hump} \mapsto |})$ with $M_{\text{\tiny hump} \mapsto |}$ given above implements the move ~\eqref{htos}.
\end{proof}

One would similarly show that the move:
\beq
\setlength{\unitlength}{0.3cm}
\begin{picture}(14,6)(0,3)
\thicklines
\put(0,0){\line(0,1){6}}
\put(3,0){\line(0,1){6}}
\put(9,0){\line(0,1){6}}
\put(12,0){\line(0,1){6}}
\multiput(1,3)(0.5,0){3}{\circle*{0.2}}
\multiput(10,3)(0.5,0){3}{\circle*{0.2}}
\put(6,0){\line(0,1){2}}
\put(6.5,2){\oval(1,2)[t]}
\put(7.5,2){\oval(1,2)[b]}
\put(7,2){\oval(2,4)[tr]}
\put(7,6){\oval(2,4)[bl]}
\end{picture}
\longrightarrow
\setlength{\unitlength}{0.3cm}
\begin{picture}(14,6)(-2,3)
\thicklines
\put(0,0){\line(0,1){6}}
\put(3,0){\line(0,1){6}}
\put(9,0){\line(0,1){6}}
\put(12,0){\line(0,1){6}}
\multiput(1,3)(0.5,0){3}{\circle*{0.2}}
\multiput(10,3)(0.5,0){3}{\circle*{0.2}}
\put(6,0){\line(0,1){6}}
\end{picture}
\label{otherhtos}
\eeq
\\
\\
\\
with a local slicing of the hump given by:
\beq
\setlength{\unitlength}{0.3cm}
\begin{picture}(14,26)
\put(0,-1){\line(0,1){26}}
\put(1,-1){\line(0,1){26}}
\put(2,-1){\line(0,1){26}}
\put(3.5,-1){\line(0,1){26}}
\put(6,-1){\line(0,1){26}}
\put(7,-1){\line(0,1){26}}
\put(8,-1){\line(0,1){26}}
\put(11,-1){\line(0,1){26}}
\put(14,-1){\line(0,1){26}}
\put(-1,-2){\text{\tiny n}}
\put(0,-3){\text{\tiny n+1}}
\put(1,-2){\text{\tiny n+2}}
\put(2,-3){\text{\tiny n+3}}
\put(4,-2){\text{\tiny n+4}}
\put(6,-2){\text{\tiny n+5}}
\put(7,-3){\text{\tiny n+6}}
\put(9,-2){\text{\tiny n+7}}
\put(12,-2){\text{\tiny n+8}}
\put(15,-2){\text{\tiny n+9}}
\thicklines
\qbezier(4,0)(7,10)(1,7)
\qbezier(1,7)(0,8)(7,11)
\qbezier(7,11)(10,0)(13,6)
\qbezier(13,6)(16,18)(6,17)
\qbezier(6,17)(5,18)(4,24)
\end{picture}
\label{sliceotherhtos}
\eeq
\\
\\
\\
is taken care of by the same $qN \times q(N-8)$ matrix $\text{diag}(M_{\text{\tiny hump} \mapsto |})$ defined above.
\begin{rmkslice}
We could equally have taken the following slicing and skeleton:
\beq
\setlength{\unitlength}{0.3cm}
\begin{picture}(16,27)
\put(-1,-1){\line(0,1){26}}
\put(2,-1){\line(0,1){26}}
\put(3,-1){\line(0,1){26}}
\put(4.2,-1){\line(0,1){26}}
\put(6,-1){\line(0,1){26}}
\put(8,-1){\line(0,1){26}}
\put(9,-1){\line(0,1){26}}
\put(11,-1){\line(0,1){26}}
\put(15,-1){\line(0,1){26}}
\put(-2,-2){\text{\tiny n}}
\put(0,-2){\text{\tiny n+1}}
\put(2,-3){\text{\tiny n+2}}
\put(3,-2){\text{\tiny n+3}}
\put(4,-3){\text{\tiny n+4}}
\put(6,-2){\text{\tiny n+5}}
\put(8,-3){\text{\tiny n+6}}
\put(9.5,-2){\text{\tiny n+7}}
\put(12,-2){\text{\tiny n+8}}
\put(16,-2){\text{\tiny n+9}}
\thicklines
\qbezier(8,9)(0,14)(8,5)
\qbezier(8,5)(13,2)(14,8)
\qbezier(14,8)(13,14)(3,19)
\qbezier(3,19)(0,19)(0,24)
\qbezier(8,9)(9,10)(2,14)
\qbezier(2,14)(1,11)(0,0)
\end{picture}
\eeq
\\
\\
\\
\\
and the matrix would have been the same.
\end{rmkslice}

We have shown:
\begin{humptos}
The map $\mathbf{h}_{\Delta.\pi.1, \;\text{hump}\mapsto |}$ that acts on $\widetilde{Z}_f(L)$ in book notation to effect two of the $\Delta.\pi.1$ Reidemeister moves as depicted in ~\eqref{htos} and ~\eqref{otherhtos} is given on a page by:
\beq
\mathbf{h}_{\substack{\Delta.\pi.1 \\ \text{\tiny hump} \mapsto |}}A:=M_{\substack{\Delta.\pi.1 \\ \text{\tiny hump} \mapsto |}}^T A M_{\substack{\Delta.\pi.1 \\ \text{\tiny hump} \mapsto |}}
\eeq
where $M_{\Delta.\pi.1,\;\text{\tiny hump} \mapsto |}$ is the $qN \times q(N-8)$ matrix $\text{diag}(M_{\text{\tiny hump} \mapsto |})$ with $M_{\text{\tiny hump} \mapsto |}$ given by:
\beq
\text{\Large M}_{\text{hump} \mapsto |}=
\setlength{\unitlength}{0.4cm}
\begin{picture}(10,7)(-1,3)
\put(1,7){\line(1,0){3}}
\put(4,7){\line(0,1){3}}
\put(4,0){\line(0,1){4}}
\put(4,4){\line(1,0){4}}
\put(1,6.8){\line(1,0){7}}
\put(1,4.2){\line(1,0){7}}
\thicklines
\put(1,5){\oval(2,10)[l]}
\put(8,5){\oval(2,10)[r]}
\put(2,8){$I_n$}
\put(5,2){$I_{N-n-8}$}
\put(3,5){$O_{8 \times (N-8)}$}
\end{picture} \label{star1}
\eeq
\\ \\
for $n \geq 0$.
\end{humptos}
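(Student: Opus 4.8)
The plan is to read this statement as a repackaging of the preceding Proposition. That result already asserts that the $qN\times q(N-8)$ matrix $\text{diag}(M_{\text{hump}\mapsto|})$ implements the moves \eqref{htos} and \eqref{otherhtos}; what remains is to check that the precise way it does so on a single page is the congruence $A\mapsto M^{T}AM$, and that this congruence then extends page-wise — hence book-wise — to all of $\widetilde{Z}_f(L)=\sum_{m\ge0}\sum_{|I|=|J|=|U|=|V|=m}c_{IJUV}A_{IJUV}$.

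First I would fix the bookkeeping. Since $\widetilde{Z}_f$ is multiplicative and the moves in question are supported in a single horizontal slice, their effect on $\widetilde{Z}_f(L)$ is felt one chord at a time; by the standing convention every chord has been pushed up to a local maximum, and one can arrange in addition that no chord of any component occupies the eight strips $n+1,\dots,n+8$ that the slicing \eqref{slicehtos} (resp. \eqref{sliceotherhtos}) cuts off around the hump. Consequently the transformation of a chord is: deletion if one of its feet lies on the strand carrying the hump — the self-chords being absorbed by the powers of $\nu$ and the surviving long and mixed chords cancelling in sign-opposite pairs across the ascending and descending arcs as the hump is squeezed to vanishing width, exactly as discussed before the preceding Proposition — and otherwise a relabeling of the two strips it occupies, the same relabeling on every component, namely $k\mapsto k$ for $k\le n$ and $k\mapsto k-8$ for $k\ge n+9$.

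Next I would verify the congruence on one page. A single page $A=A_{s,t,n,p}$ is the $qN\times qN$ symmetric matrix whose only nonzero entries are a $1$ in the $(s,t)$ block at position $(n,p)$ and the symmetric $1$ in the $(t,s)$ block; with $M:=\text{diag}(M_{\text{hump}\mapsto|})$ one has $(M^{T}AM)_{ij}=\sum_{k,l}M_{ki}A_{kl}M_{lj}$, and since each column of $M_{\text{hump}\mapsto|}$ carries a single $1$ while its rows $n+1,\dots,n+8$ are zero, this vanishes identically when a foot index of $A$ falls among $n+1,\dots,n+8$ (the chord was on the hump) and otherwise equals $1$ exactly at the pair of relabeled strip indices. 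Thus $M^{T}AM$ is the page of the transformed chord — this is the content of the preceding Proposition, now phrased as a congruence — and it is again symmetric, so pages remain pages (now for the slicing with $N-8$ strips). Since the same $M$ governs every chord of every book, stacking is preserved, so $\mathbf{h}_{\Delta.\pi.1,\,\text{hump}\mapsto|}(A_{IJUV})=\stck_{1\le a\le m}M^{T}A_{i_a,j_a,u_a,v_a}M$; multiplying by $c_{IJUV}$ and summing over all books recovers the displayed formula, i.e.\ on a page $\mathbf{h}_{\Delta.\pi.1,\,\text{hump}\mapsto|}A=M^{T}AM$.

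Both moves \eqref{htos} and \eqref{otherhtos} (hump opening to the left or to the right) are covered by the same computation, and by the preceding Remark the alternative local slicings of the hump give the very same matrix $M$, so no case is lost. The one genuinely delicate point — that every chord meeting the hump really disappears — is not new work here: it was settled in the analysis preceding the preceding Proposition, and the present argument merely translates that analysis into the matrix language of books.
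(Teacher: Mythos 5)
Your proposal is correct and follows essentially the same route as the paper: the paper states this Proposition with the words ``We have shown,'' treating it as a direct repackaging of the preceding Proposition (the matrix $\text{diag}(M_{\text{hump}\mapsto|})$ kills chords with a foot in the eight hump strips and relabels the remaining strip indices, with the same matrix covering both versions of the move and both slicings). Your explicit entrywise verification of $(M^{T}AM)_{ij}$ and the page-wise extension to books merely spell out what the paper leaves implicit, so there is nothing substantive to add.
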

\begin{Zhumpget}
The matrix that implements the move:
\beq
\setlength{\unitlength}{0.3cm}
\begin{picture}(14,5)(-2,3)
\thicklines
\put(0,0){\line(0,1){6}}
\put(3,0){\line(0,1){6}}
\put(9,0){\line(0,1){6}}
\put(12,0){\line(0,1){6}}
\multiput(1,3)(0.5,0){3}{\circle*{0.2}}
\multiput(10,3)(0.5,0){3}{\circle*{0.2}}
\put(6,0){\line(0,1){6}}
\end{picture}
\longrightarrow
\setlength{\unitlength}{0.3cm}
\begin{picture}(14,5)(0,3)
\thicklines
\put(0,0){\line(0,1){6}}
\put(3,0){\line(0,1){6}}
\put(9,0){\line(0,1){6}}
\put(12,0){\line(0,1){6}}
\multiput(1,3)(0.5,0){3}{\circle*{0.2}}
\multiput(10,3)(0.5,0){3}{\circle*{0.2}}
\put(6,0){\line(0,1){2}}
\put(5.5,2){\oval(1,2)[t]}
\put(4.5,2){\oval(1,2)[b]}
\put(5,2){\oval(2,4)[tl]}
\put(5,6){\oval(2,4)[br]}
\end{picture}
\label{stoh}
\eeq
\\
\\
\\
with the same slicing convention as in ~\eqref{slicehtos} is given by the $N \times (N+8)$ matrix:
\beq
\setlength{\unitlength}{0.4cm}
\begin{picture}(15,11)(-1,0)
\put(0,5){\line(1,0){3}}
\put(3,5){\line(0,1){3}}
\put(3.2,0){\line(0,1){8}}
\put(5.8,0){\line(0,1){8}}
\put(6,0){\line(0,1){5}}
\put(6,5){\line(1,0){5}}
\thicklines
\put(0,4){\oval(2,8)[l]}
\put(11,4){\oval(2,8)[r]}
\put(0.5,6){$I_n$}
\put(3.2,4){$O_{N \times 8}$}
\put(7,3){$I_{N-n}$}
\put(-7,4){$\text{\Large M}_{| \mapsto \text{hump}}=$}
\end{picture}
\eeq
for $n \geq 0$.
\end{Zhumpget}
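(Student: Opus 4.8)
The plan is to recognise that the move \eqref{stoh} is exactly the inverse of the move \eqref{htos} (equivalently of \eqref{otherhtos}) studied in Propositions \ref{Zhumpvert} and \ref{humptos}, and to deduce the required matrix simply by reading the slicing of \eqref{slicehtos} in the opposite direction. First I would note that, by the same reasoning recalled just before Proposition \ref{Zhumpvert}, introducing the hump on an otherwise vertical strand leaves no surviving chord on the hump: in the expression for $\widetilde{Z}_f$ the $\nu^{\pm 1}$-corrections cancel the self-chords, and each long chord with a single foot on the ascending part of the hump pairs with one on the descending part, their $\dlog$ contributions differing in sign, so that all of them cancel once the window of the hump is squeezed to vanishing width. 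Hence the only net effect of \eqref{stoh} on a book is a relabelling of strips: the eight strips created for the hump carry no chord data, and therefore correspond to eight zero columns in the transformation matrix.

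Next I would make the re-indexing explicit using the slicing of \eqref{slicehtos}. With that convention the vertical strand on the left-hand side of \eqref{stoh} sits in strip $n+1$; after the move, eight new strips are inserted to accommodate the hump together with the dividing slices surrounding its local maximum and its window, so that strips $1,\dots,n$ are unaffected while strip $k$ of the straight picture becomes strip $k+8$ of the hump picture for every $k\ge n+1$. On the block corresponding to the component carrying the hump this produces precisely the block matrix $\mathrm{diag}(I_n,\,O_{N\times 8},\,I_{N-n})$, i.e.\ the stated $M_{|\mapsto\text{hump}}$, and on every other block the relabelling is trivial, so that the full transformation is $\mathrm{diag}(M_{|\mapsto\text{hump}})$ of size $qN\times q(N+8)$. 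As in the proof of Proposition \ref{Zhumpvert}, chords lying on strands other than the hump strand can be pushed up so as to avoid the eight hump strips, landing in strips $\le n$ or $\ge n+9$, so the same matrix also governs their re-indexing; the case $n=0$, with the hump on the leftmost strand, is covered by taking $I_0$ to be the empty matrix.

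Finally I would record the consistency check $M_{|\mapsto\text{hump}}=M_{\text{hump}\mapsto|}^{T}$ (with $N$ replaced by $N+8$ in the matrix of Proposition \ref{Zhumpvert}), which reflects that \eqref{stoh} and \eqref{htos} are mutually inverse: $M_{\text{hump}\mapsto|}$ has distinct standard basis vectors as columns, hence $M_{|\mapsto\text{hump}}\,M_{\text{hump}\mapsto|}=I_N$, so composing the two page transformations $A\mapsto M^{T}AM$ returns the original book. The one point that genuinely needs care — the main obstacle — is the same as the one underlying Proposition \ref{Zhumpvert}: verifying that after the $\nu^{\pm1}$-corrections and the squeezing of the window no chord remains trapped in the eight hump strips, so that these strips really do contribute only zero columns; once that is granted, the identification of the matrix is a bookkeeping matter of matching strip labels on the two sides of \eqref{stoh}.
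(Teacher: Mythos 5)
Your proof is correct and follows essentially the same route as the paper's own (very terse) argument: the paper likewise treats the matrix as a pure strip-relabelling, with the eight new hump strips contributing zero columns and chords on other strands pushed up into strips $1$ through $n$ or $n+9$ through $N+8$. Your added justification that no chord survives on the newly created hump (via the $\nu$-corrections and window-squeezing recalled before Proposition 5.1) and the consistency check $M_{|\mapsto\text{hump}}M_{\text{hump}\mapsto|}=I_N$ are correct elaborations of points the paper leaves implicit.
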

\begin{proof}
In other horizontal slices, chords ending on strands other than the one on which the hump is located are moved up to end in strips indexed $1$ through $n$ or $n+1$ through $N$. Therefore the above matrix also takes care of the transformation of those chords under the move ~\eqref{stoh}. We conclude that the $qN \times q(N+8)$ matrix $\text{diag}(M_{| \mapsto \text{\tiny hump}})$ with $M_{| \mapsto \text{\tiny hump}}$ given above implements the move ~\eqref{stoh}.
\end{proof}

Likewise, one would show that the same matrix implements the move:
\beq
\setlength{\unitlength}{0.3cm}
\begin{picture}(14,5)(-2,3)
\thicklines
\put(0,0){\line(0,1){6}}
\put(3,0){\line(0,1){6}}
\put(9,0){\line(0,1){6}}
\put(12,0){\line(0,1){6}}
\multiput(1,3)(0.5,0){3}{\circle*{0.2}}
\multiput(10,3)(0.5,0){3}{\circle*{0.2}}
\put(6,0){\line(0,1){6}}
\end{picture}
\longrightarrow
\setlength{\unitlength}{0.3cm}
\begin{picture}(14,5)(0,3)
\thicklines
\put(0,0){\line(0,1){6}}
\put(3,0){\line(0,1){6}}
\put(9,0){\line(0,1){6}}
\put(12,0){\line(0,1){6}}
\multiput(1,3)(0.5,0){3}{\circle*{0.2}}
\multiput(10,3)(0.5,0){3}{\circle*{0.2}}
\put(6,0){\line(0,1){2}}
\put(6.5,2){\oval(1,2)[t]}
\put(7.5,2){\oval(1,2)[b]}
\put(7,2){\oval(2,4)[tr]}
\put(7,6){\oval(2,4)[bl]}
\end{picture}
\label{otherstoh}
\eeq
\\
\\
\\
with the slicing on the hump given as in ~\eqref{sliceotherhtos}. We have shown:
\begin{stohump}
The map $\mathbf{h}_{\Delta.\pi.1, \;|\mapsto \text{\tiny hump}}$ that acts on $\widetilde{Z}_f(L)$ in book notation to effect two of the $\Delta.\pi.1$ Reidemeister moves as depicted in ~\eqref{stoh} and ~\eqref{otherstoh} is given on a page by:
\beq
\mathbf{h}_{\substack{\Delta.\pi.1 \\ | \mapsto \text{\tiny hump} }}A:=M_{\substack{\Delta.\pi.1 \\ | \mapsto \text{\tiny hump}}}^T A M_{\substack{\Delta.\pi.1 \\ | \mapsto \text{\tiny hump}}}
\eeq
where $M_{\Delta.\pi.1,\;| \mapsto \text{\tiny hump}}$ is the $qN \times q(N+8)$ $\text{diag}(M_{| \mapsto \text{\tiny hump}})$ matrix with $M_{| \mapsto \text{\tiny hump}}$ given by:
\beq
\setlength{\unitlength}{0.4cm}
\begin{picture}(15,11)(-1,0)
\put(0,5){\line(1,0){3}}
\put(3,5){\line(0,1){3}}
\put(3.2,0){\line(0,1){8}}
\put(5.8,0){\line(0,1){8}}
\put(6,0){\line(0,1){5}}
\put(6,5){\line(1,0){5}}
\thicklines
\put(0,4){\oval(2,8)[l]}
\put(11,4){\oval(2,8)[r]}
\put(0.5,6){$I_n$}
\put(3.2,4){$O_{N \times 8}$}
\put(7,3){$I_{N-n}$}
\put(-7,4){$\text{\Large M}_{| \mapsto \text{hump}}=$}
\end{picture}
\eeq
\\
\end{stohump}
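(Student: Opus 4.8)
The plan is to assemble the statement from the single-move result established just above together with the page-wise character of the induced map. First I would recall that for the move \eqref{stoh} the preceding Proposition produced the $qN\times q(N+8)$ matrix $\mathrm{diag}(M_{|\mapsto\text{hump}})$ whose only role is to record how the $N$ vertical strips of the original slicing of $\widetilde{L}$ are re-indexed once the eight new dividing slices around the newly created hump are inserted: the strips $1,\dots,n$ are fixed, eight fresh strips are inserted, and the old strips $n+1,\dots,N$ are relabelled $n+9,\dots,N+8$. The crucial structural fact is that $M_{|\mapsto\text{hump}}$ is a $\{0,1\}$ matrix each of whose columns is a standard basis vector, except for the eight columns corresponding to the hump strips, which are zero; the block-diagonal matrix $\mathrm{diag}(M_{|\mapsto\text{hump}})$ applies this same relabelling inside every component block, which is legitimate because a vertical slice cuts the entire diagram and because, in every horizontal slice away from the hump, the chords can be slid up so as to end in strips indexed $1,\dots,n$ or $n+1,\dots,N$.

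Next I would check that the congruence by this matrix realises the move on a single page. A page carrying one chord is, up to its symmetric partner entry, an elementary matrix $E_{\alpha\beta}$, and $M^T E_{\alpha\beta} M = (M^T e_\alpha)(M^T e_\beta)^T$; since $M^T e_\alpha = e_{\alpha'}$ with $\alpha\mapsto\alpha'$ the strip relabelling just described, the congruence $A\mapsto M^T A M$ sends the page of a chord to the page of that same chord after \eqref{stoh}. Applying this chord by chord — that is, with $M^{\centerdot}$ as in the main Theorem — and summing over all books yields the stated formula for $\mathbf{h}_{\Delta.\pi.1,\,|\mapsto\text{hump}}$ on $\widetilde{Z}_f(L)$. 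One must also verify that the eight new hump strips genuinely carry no chord in $\widetilde{Z}_f$, so that nothing is lost by passing through the zero columns of $M$: the self-chords of the hump are cancelled by the powers of $\nu$ built into $\widetilde{Z}_f$, and each long chord with a foot on the ascending part of the hump cancels against the corresponding one with a foot on the descending part, their $\dlog$ factors differing by a sign when the hump is squeezed to vanishing width — exactly the cancellation recalled at the beginning of this subsection. Finally, for the second move \eqref{otherstoh} I would repeat the argument verbatim with the slicing \eqref{sliceotherhtos}: the hump now opens to the right, but the number of inserted strips and the re-indexing of the old ones are unchanged, so the same matrix $\mathrm{diag}(M_{|\mapsto\text{hump}})$ does the job, which is precisely the content of the ``likewise'' remark preceding the statement.

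I expect the main obstacle to be not the matrix algebra, which is routine, but the uniform verification that the eight inserted strips are chord-free after the move — in all chord degrees and in the presence of all the other strands. This is where one genuinely uses the $M$-resolved normalisation of $\widetilde{Z}_f$ together with the Long Chords Lemma of \cite{ChDu}, which applies because, by isotopy invariance of $\widetilde{Z}_f$, the move may be performed far from the rest of the link and without creating new local extrema. Once that point is granted, the congruence $A\mapsto M^T A M$ is forced on each page and the proof reduces to the elementary computation above.
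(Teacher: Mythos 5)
Your proposal is correct and follows essentially the same route as the paper, which derives this statement as an immediate consequence of the preceding Proposition (the strip relabelling by $\text{diag}(M_{|\mapsto\text{hump}})$) together with the cancellation of hump chords recalled at the start of this subsection, and disposes of the move \eqref{otherstoh} with a ``likewise'' remark. Your write-up merely makes explicit the elementary congruence computation $M^T E_{\alpha\beta}M$ and the chord-free status of the eight inserted strips, both of which the paper leaves implicit.
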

\newpage
Finally we consider the map that implements the following move on strands:
\beq
\setlength{\unitlength}{0.3cm}
\begin{picture}(14,5)(0,3)
\thicklines
\put(0,0){\line(0,1){6}}
\put(3,0){\line(0,1){6}}
\put(9,0){\line(0,1){6}}
\put(12,0){\line(0,1){6}}
\multiput(1,3)(0.5,0){3}{\circle*{0.2}}
\multiput(10,3)(0.5,0){3}{\circle*{0.2}}
\put(6,0){\line(0,1){2}}
\put(5.5,2){\oval(1,2)[t]}
\put(4.5,2){\oval(1,2)[b]}
\put(5,2){\oval(2,4)[tl]}
\put(5,6){\oval(2,4)[br]}
\end{picture}
\longrightarrow
\setlength{\unitlength}{0.3cm}
\begin{picture}(14,5)(-1,3)
\thicklines
\put(0,0){\line(0,1){6}}
\put(3,0){\line(0,1){6}}
\put(9,0){\line(0,1){6}}
\put(12,0){\line(0,1){6}}
\multiput(1,3)(0.5,0){3}{\circle*{0.2}}
\multiput(10,3)(0.5,0){3}{\circle*{0.2}}
\put(6,0){\line(0,1){2}}
\put(6.5,2){\oval(1,2)[t]}
\put(7.5,2){\oval(1,2)[b]}
\put(7,2){\oval(2,4)[tr]}
\put(7,6){\oval(2,4)[bl]}
\end{picture}
\label{htoh}
\eeq
\\
\\
With slicing conventions as in ~\eqref{slicehtos} and ~\eqref{sliceotherhtos}, the corresponding transformation matrix is given by the $N \times N$ matrix:
\beq
\text{\Large M}_{\text{hump} \mapsto \text{hump}}=
\setlength{\unitlength}{0.4cm}
\begin{picture}(12,8)(-1,4)
\put(0,7){\line(1,0){3}}
\put(3,7){\line(0,1){3}}
\put(6,0){\line(0,1){4}}
\put(6,4){\line(1,0){4}}
\thicklines
\put(0,5){\oval(2,10)[l]}
\put(10,5){\oval(2,10)[r]}
\put(1,8){$I_n$}
\put(7,2){$I_{N-n-8}$}
\put(3.5,5){$O_{8 \times 8}$}
\end{picture}=\text{\Large M}_{\text{hump} \mapsto |} \cdot \text{\Large M}_{| \mapsto \text{hump}}
\eeq
\\ \\ \\
In other horizontal slices, chords ending on strands other than the one on which the hump is located are moved up to end in strips indexed $1$ through $n$ or $n+9$ through $N$. Therefore the above matrix also takes care of the transformation of those chords under the move ~\eqref{htoh}. We conclude that the $qN \times qN$ matrix $\text{diag}(M_{\text{\tiny hump} \mapsto \text{\tiny hump}})$ with $M_{\text{\tiny hump} \mapsto \text{\tiny hump}}$ given above implements the move ~\eqref{htoh}. We have shown:
\begin{humptohump}
The map $\mathbf{h}_{\Delta.\pi.1, \;\text{\tiny hump} \mapsto \text{\tiny hump}}$ that acts on $\widetilde{Z}_f(L)$ in book notation to effect the $\Delta.\pi.1$ Reidemeister move as depicted in ~\eqref{htoh} is given on a page by:
\beq
\mathbf{h}_{\substack{\Delta.\pi.1 \\  \text{\tiny hump} \mapsto \text{\tiny hump} }}A:=M_{\substack{\Delta.\pi.1 \\ \text{\tiny hump} \mapsto \text{\tiny hump}}}^T A M_{\substack{\Delta.\pi.1 \\ \text{\tiny hump} \mapsto \text{\tiny hump}}}
\eeq
where $M_{\Delta.\pi.1,\;\text{\tiny hump} \mapsto \text{\tiny hump}}$ is the $qN \times qN$ matrix $\text{diag}(M_{\text{\tiny hump} \mapsto \text{\tiny hump}})$ with $M_{\text{\tiny hump} \mapsto \text{\tiny hump}}$ given by:
\beq
\text{\Large M}_{\text{hump} \mapsto \text{hump}}=
\setlength{\unitlength}{0.4cm}
\begin{picture}(12,8)(-1,4)
\put(0,7){\line(1,0){3}}
\put(3,7){\line(0,1){3}}
\put(6,0){\line(0,1){4}}
\put(6,4){\line(1,0){4}}
\thicklines
\put(0,5){\oval(2,10)[l]}
\put(10,5){\oval(2,10)[r]}
\put(1,8){$I_n$}
\put(7,2){$I_{N-n-8}$}
\put(3.5,5){$O_{8 \times 8}$}
\end{picture}
\eeq
\\
\\
\end{humptohump}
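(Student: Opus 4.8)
The plan is to obtain the move~\eqref{htoh} as a composition of two moves that have already been treated. Reading~\eqref{htoh} from left to right, one first collapses the left-pointing hump down onto the $(n+1)$-st vertical strip, which is exactly the move~\eqref{htos} (or, for the mirror configuration, \eqref{otherhtos}); this lands one in the picture with $N-8$ strips carrying the slicing inherited from~\eqref{slicehtos} (equivalently~\eqref{sliceotherhtos}). One then grows a right-pointing hump back out of that strip, which is the move~\eqref{otherstoh} (or \eqref{stoh}), producing the picture with $N$ strips and the slicing of~\eqref{sliceotherhtos}. By the Proposition implementing~\eqref{htos} and~\eqref{otherhtos} the first step is carried out on each page by $A\mapsto M_{\mathrm{hump}\mapsto|}^{\,T}\,A\,M_{\mathrm{hump}\mapsto|}$, and by the Proposition implementing~\eqref{otherstoh} and~\eqref{stoh} the second is carried out by $A\mapsto M_{|\mapsto\mathrm{hump}}^{\,T}\,A\,M_{|\mapsto\mathrm{hump}}$; on the intermediate stage the ambient strip count $N$ is replaced by $N-8$, so that $M_{\mathrm{hump}\mapsto|}$ is $N\times(N-8)$, $M_{|\mapsto\mathrm{hump}}$ is $(N-8)\times N$, and the two rectangular matrices are composable.

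Composing the two page-wise transformations is then a one-line consequence of associativity of matrix multiplication: for any page $A$,
\beq
M_{|\mapsto\mathrm{hump}}^{\,T}\Bigl(M_{\mathrm{hump}\mapsto|}^{\,T}\,A\,M_{\mathrm{hump}\mapsto|}\Bigr)M_{|\mapsto\mathrm{hump}}=\bigl(M_{\mathrm{hump}\mapsto|}\,M_{|\mapsto\mathrm{hump}}\bigr)^{T}\,A\,\bigl(M_{\mathrm{hump}\mapsto|}\,M_{|\mapsto\mathrm{hump}}\bigr),
\eeq
and the same identity survives passage to the block-diagonal forms, since $\mathrm{diag}(P)\,\mathrm{diag}(Q)=\mathrm{diag}(PQ)$. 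Hence the composite transformation matrix is the $qN\times qN$ matrix $\mathrm{diag}\bigl(M_{\mathrm{hump}\mapsto|}\cdot M_{|\mapsto\mathrm{hump}}\bigr)$, and it only remains to identify the $N\times N$ product $M_{\mathrm{hump}\mapsto|}\cdot M_{|\mapsto\mathrm{hump}}$.

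Writing $M_{\mathrm{hump}\mapsto|}$ with row blocks of sizes $n,\,8,\,N-n-8$ and column blocks of sizes $n,\,N-n-8$, it is $\left(\begin{smallmatrix}I_n&0\\0&0\\0&I_{N-n-8}\end{smallmatrix}\right)$; writing $M_{|\mapsto\mathrm{hump}}$ at size $(N-8)\times N$ with row blocks $n,\,N-n-8$ and column blocks $n,\,8,\,N-n-8$, it is $\left(\begin{smallmatrix}I_n&0&0\\0&0&I_{N-n-8}\end{smallmatrix}\right)$. Block multiplication yields $\left(\begin{smallmatrix}I_n&0&0\\0&0&0\\0&0&I_{N-n-8}\end{smallmatrix}\right)$, with row and column blocks $n,\,8,\,N-n-8$ and an $8\times 8$ zero block in the middle, which is precisely $M_{\mathrm{hump}\mapsto\mathrm{hump}}$ as displayed (and, taking $I_0$ to be the empty matrix, this covers $n=0$ uniformly). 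As in the preceding Propositions, chords in horizontal slices away from the hump can be pushed up so as to end in strips numbered $1$ through $n$ or $n+9$ through $N$, on which $M_{\mathrm{hump}\mapsto\mathrm{hump}}$ acts as the corresponding identity relabeling, so those chords require no separate treatment; and since the hump in~\eqref{htoh} lies on a single component, only that component's diagonal block is altered, which accounts for the $\mathrm{diag}(\cdot)$ form of $M_{\Delta.\pi.1,\;\mathrm{hump}\mapsto\mathrm{hump}}$.

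The only step carrying content beyond routine block arithmetic is checking that the slicing convention on the intermediate vertical-strand picture produced by~\eqref{htos} coincides with the convention assumed as input by~\eqref{otherstoh}, so that the two congruences may legitimately be concatenated; by construction the straightened strand occupies the $(n+1)$-st strip in both pictures, so the conventions agree and the composition is valid. Everything else is the matrix identity above.
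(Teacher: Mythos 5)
Your proposal is correct and follows essentially the same route as the paper, which likewise obtains the transformation by composing the two previously established congruences and records the identity $M_{\text{hump}\mapsto\text{hump}}=M_{\text{hump}\mapsto |}\cdot M_{|\mapsto\text{hump}}$, then disposes of chords on other strands by pushing them into strips $1$ through $n$ or $n+9$ through $N$. Your explicit block-multiplication check and the remark on matching the intermediate slicing conventions only make the same argument more detailed.
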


\subsection{Behavior of $\widetilde{Z}_f$ under $\Delta.\pi.2$}
The two tangle diagrams involved in the statement for the $\Delta.\pi.2$ Reidemeister move have the following presentation with vertical strips for book purposes:
\beq
\setlength{\unitlength}{0.3cm}
\begin{picture}(16,9)
\put(4,-2){\line(0,1){10}}
\put(7,-2){\line(0,1){10}}
\put(10,-2){\line(0,1){10}}
\put(2,-2){$\text{\tiny n}$}
\put(5,-2){$\text{\tiny n+1}$}
\put(8,-2){$\text{\tiny n+2}$}
\put(11,-2){$\text{\tiny n+3}$}
\thicklines
\put(0,0){\line(1,1){2.2}}
\put(2.7,2.7){\line(1,1){2.3}}
\put(7,5){\oval(4,2)[t]}
\put(9,5){\line(1,-1){5}}
\put(2.5,0){\line(0,1){9}}
\put(1,8){\circle{2}}
\put(0.8,7.8){$t$}
\put(12.8,3.8){$s$}
\put(13,4){\circle{2}}
\end{picture}
\eeq
\\
and:
\beq
\setlength{\unitlength}{0.3cm}
\begin{picture}(16,16)
\put(4,-2){\line(0,1){18}}
\put(7,-2){\line(0,1){18}}
\put(10,-2){\line(0,1){18}}
\put(1.5,-2){$\text{\tiny n}$}
\put(5,-2){$\text{\tiny n+1}$}
\put(8,-2){$\text{\tiny n+2}$}
\put(11,-2){$\text{\tiny n+3}$}
\thicklines
\put(0,2){\line(1,1){5}}
\put(7,7){\oval(4,2)[t]}
\put(9,7){\line(1,-1){2}}
\put(11.5,4.5){\line(1,-1){2}}
\put(3,0){\line(0,1){2}}
\put(3,2){\line(3,1){9}}
\put(12,5){\line(0,1){4}}
\put(3,12){\line(3,-1){9}}
\put(3,12){\line(0,1){2}}
\put(1,13){\circle{2}}
\put(0.8,12.8){$t$}
\put(1,6){\circle{2}}
\put(0.8,5.8){$s$}
\end{picture}
\eeq
\\
\\
\begin{Iso3}
With tangle diagrams as above, the map $\mathbf{h}_{\Delta.\pi.2}$ that acts on $\widetilde{Z}_f(L)$ in book notation to effect the Reidemeister move $\Delta.\pi.2$, is given on a page by:
\beq
\mathbf{h}_{\Delta.\pi.2}A:=M_{\Delta.\pi.2}^T A M_{\Delta.\pi.2}
\eeq
where $M_{\Delta.\pi.2}$ is the $qN \times qN$ identity matrix save for the $tt$ block which is of the form:
\beq
\setlength{\unitlength}{0.4cm}
\begin{picture}(14,16)
\put(0,9){\line(1,0){3}}
\put(3,9){\line(0,1){3}}
\put(7,0){\line(0,1){5}}
\put(7,5){\line(1,0){5}}
\put(3.2,8.2){$0$}
\put(4.2,8.2){$0$}
\put(5.2,8.2){$0$}
\put(6.2,8.2){$1$}
\put(3.2,7.2){$0$}
\put(4.2,7.2){$0$}
\put(5.2,7.2){$0$}
\put(6.2,7.2){$0$}
\put(3.2,6.2){$0$}
\put(4.2,6.2){$0$}
\put(5.2,6.2){$0$}
\put(6.2,6.2){$0$}
\put(3.2,5.2){$1$}
\put(4.2,5.2){$0$}
\put(5.2,5.2){$0$}
\put(6.2,5.2){$0$}
\put(3,13){$\text{\tiny n}$}
\put(4,14){$\text{\tiny n+1}$}
\put(5,13){$\text{\tiny n+2}$}
\put(6,14){$\text{\tiny n+3}$}
\put(-2.5,8.2){$\text{\tiny n}$}
\put(-2.5,7.2){$\text{\tiny n+1}$}
\put(-2.5,6.2){$\text{\tiny n+2}$}
\put(-2.5,5.2){$\text{\tiny n+3}$}
\thicklines
\put(0,6){\oval(2,12)[l]}
\put(12,6){\oval(2,12)[r]}
\put(0.5,10){$I_{n-1}$}
\put(8,3){$I_{N-n-3}$}
\end{picture}
\eeq
\\
\end{Iso3}
\begin{proof}
Recall that chords on a given tangle are moved until they reach a local max. Keeping this in mind, under the move $\Delta.\pi.2$, only the $t$-th component moves, and thus only chords with a foot on this component need be considered. Suppose a chord is stretching between that component and the $s$-th component. Moving the chord up the $s$-th component so that it's localized near its local max, the foot of that chord on the $s$-th component is either in the $n+1$-st or $n+2$-nd strip, and its other foot on the $t$-th component is then in the $n$-th or $n+3$-rd strip depending on which tangle we are looking at. Under $\Delta.\pi.2$, chords with one foot in the $n$-th strip (resp. the $n+3$-rd strip) are moved to end up in the $n+3$-rd strip (resp. the $n$-th strip). Suppose now a chord is stretching between the $t$-th component and another $l$-th component, $l \neq s$. For a needle shaped local max on the $s$-th component, the corresponding vertical strips indexed by $n+1$ and $n+2$ are sufficient narrow that chords from some $l$-th component, $l\neq s$ end on the $t$-th component in some strip other than the $n+1$-st or $n+2$-nd strips, that is in the $n$-th or $n+3$-rd strips, which are interchanged under $\Delta.\pi.2$. We conclude that $\Delta.\pi.2$ is implemented by a $qN \times qN$ matrix with a $tt$ block of the form above which effectively switches the $n$-th and $n+3$-rd strips for $n \geq 1$, taking $I_0$ to be the empty matrix in the event that $n=1$ indexes the first vertical strip.
\end{proof}

\subsection{Behavior of $\widetilde{Z}_f$ under $\Omega.1.f$}
The two equivalent tangle diagrams under this move are represented as follows along with the strips for book representation purposes:
\beq
\setlength{\unitlength}{0.3cm}
\begin{picture}(18,22)
\put(-1,-1){\line(0,1){22}}
\put(1,-1){\line(0,1){22}}
\put(1.5,-1){\line(0,1){22}}
\put(2.5,-1){\line(0,1){22}}
\put(4,-1){\line(0,1){22}}
\put(6,-1){\line(0,1){22}}
\put(8,-1){\line(0,1){22}}
\put(9.8,-1){\line(0,1){22}}
\put(14,-1){\line(0,1){22}}
\put(-2,-3){\text{\tiny $n$}}
\put(-1,-2){\text{\tiny $n+1$}}
\put(0.5,-3){\text{\tiny $n+2$}}
\put(1.5,-2){\text{\tiny $n+3$}}
\put(2.5,-1){\text{\tiny $n+4$}}
\put(4.5,-2){\text{\tiny $n+5$}}
\put(6.5,-3){\text{\tiny $n+6$}}
\put(8,-2){\text{\tiny $n+7$}}
\put(11,-2){\text{\tiny $n+8$}}
\put(15,-2){\text{\tiny $n+9$}}
\thicklines
\qbezier(16,0)(15,6)(14,9)
\qbezier(13,10)(8,24)(3,18)
\qbezier(3,18)(0,15)(1,14)
\qbezier(1,14)(3,18)(6,9)
\qbezier(6,9)(13,0)(16,20)
\end{picture} \label{lbird}
\eeq
\\
\\
and:
\beq
\setlength{\unitlength}{0.3cm}
\begin{picture}(18,22)
\put(17,-1){\line(0,1){22}}
\put(15,-1){\line(0,1){22}}
\put(14.5,-1){\line(0,1){22}}
\put(13.5,-1){\line(0,1){22}}
\put(12,-1){\line(0,1){22}}
\put(10,-1){\line(0,1){22}}
\put(8,-1){\line(0,1){22}}
\put(4,-1){\line(0,1){22}}
\put(1,-1){\line(0,1){22}}
\put(18,-3){\text{\tiny $n+9$}}
\put(15,-2){\text{\tiny $n+8$}}
\put(13.8,0){\text{\tiny $n+7$}}
\put(13,-1){\text{\tiny $n+6$}}
\put(12,-3){\text{\tiny $n+5$}}
\put(10.5,-2){\text{\tiny $n+4$}}
\put(8,-1){\text{\tiny $n+3$}}
\put(5,-2){\text{\tiny $n+2$}}
\put(2,-2){\text{\tiny $n+1$}}
\put(0,-2){\text{\tiny $n$}}
\thicklines
\qbezier(0,0)(1,6)(2,9)
\qbezier(2,9)(8,24)(13,18)
\qbezier(13,18)(16,15)(15,14)
\qbezier(15,14)(13,18)(10,9)
\qbezier(10,9)(3,0)(2,8)
\qbezier(1.8,9.2)(1,15)(0,20)
\end{picture} \label{rbird}
\eeq
\\
\\
\begin{Iso4}
With tangle diagrams as above, the map $\mathbf{h}_{\Omega.1.f}$ that acts on $\widetilde{Z}_f(L)$ in book notation to effect the Reidemeister move $\Omega.1.f$ on the $l$-th component of $L$, is given on a page by:
\beq
\mathbf{h}_{\Omega.1.f}A:=M_{\Omega.1.f}^T A M_{\Omega.1.f}
\eeq
where $M_{\Omega.1.f}$ is the $qN \times qN$ identity matrix save for the $ll$ block which is of the form:
\beq
\setlength{\unitlength}{0.4cm}
\begin{picture}(14,16)
\put(0,9){\line(1,0){3}}
\put(3,9){\line(0,1){3}}
\put(7,0){\line(0,1){5}}
\put(7,5){\line(1,0){5}}
\put(-2,8.2){\text{\tiny n}}
\put(-2,7.2){$\cdot$}
\put(-2,6.7){$\cdot$}
\put(-2,6.2){$\cdot$}
\put(-3,5.2){\text{\tiny n+9}}
\put(3.2,12.5){\text{\tiny n}}
\put(4.2,12.5){$\cdot$}
\put(4.7,12.5){$\cdot$}
\put(5.2,12.5){$\cdot$}
\put(6.2,12.5){\text{\tiny n+9}}
\put(5.2,7.2){$\cdot$}
\put(4.2,6.2){$\cdot$}
\put(4.7,6.7){$\cdot$}
\put(3.2,5.2){$1$}
\put(6.2,8.2){$1$}
\thicklines
\put(0,6){\oval(2,12)[l]}
\put(12,6){\oval(2,12)[r]}
\put(1,10){$I_{n-1}$}
\put(8,3){$I_{N-n-11}$}
\end{picture}
\eeq
\\
\end{Iso4}
\begin{proof}
With the vertical slicing as in \eqref{lbird} and \eqref{rbird}, it is immediate that the above matrix implements the move $\Omega.1.f$ on the $l$-th component. Chords with feet on other strands are in strips indexed $1$ through $n$ or $n+3$ through $N$ and stay put under $\Omega.1.f$, so for those we just use an identity matrix. We conclude that the above matrix implements the move $\Omega.1.f$.
\end{proof}

\subsection{Adding and deleting strips}
If $M_n^+$ denotes the matrix that inserts a strip between the $n$-th and $n+1$-st strip, then the transformation on pages that effects such a change is the $qN \times q(N+1)$ matrix $\text{diag}(M_n^+)$ with:
\beq
\setlength{\unitlength}{0.3cm}
\begin{picture}(11,10)
\put(0,5){\line(1,0){3}}
\put(3,5){\line(0,1){3}}
\put(4,5){\line(1,0){5}}
\put(4,0){\line(0,1){5}}
\put(3.2,9){$\text{\tiny n+1}$}
\put(-2.8,4.2){$\text{\tiny n+1}$}
\put(3.2,7){$\text{\small 0}$}
\put(3.2,5.2){$\cdot$}
\put(3.2,4.2){$\cdot$}
\put(3.2,3.2){$\cdot$}
\put(3.2,0){$\text{\small 0}$}
\put(-8,4){$\text{\Large M}_n^+ =$}
\thicklines
\put(0,4){\oval(2,8)[l]}
\put(9,4){\oval(2,8)[r]}
\put(1,6){$I_n$}
\put(5,2){$I_{N-n}$}
\end{picture}
\eeq
\\
and if $M_n^-$ denotes the matrix that deletes the $n$-th strip, then the transformation on pages that effects such a change is the $q(N+1) \times qN$ matrix $\text{diag}(M_n^-)$ with:
\beq
\setlength{\unitlength}{0.3cm}
\begin{picture}(11,11)
\put(0,5){\line(1,0){3}}
\put(3,5){\line(0,1){3}}
\put(2,4.9){\line(1,0){5}}
\put(2,0){\line(0,1){4.9}}
\put(2,9){$\text{\tiny n-1}$}
\put(-7,4){$\text{\Large M}_n^- =$}
\thicklines
\put(0,4){\oval(2,8)[l]}
\put(9,4){\oval(2,8)[r]}
\put(0,6){$I_{n-1}$}
\put(3,2){$I_{N-n+2}$}
\end{picture}
\eeq
\\

\section{Behavior of $\widetilde{Z}_f(L)$ in book notation under a change of orientation}
\subsection{Behavior of $\widetilde{Z}_f$ under orientation change}
Recall that the links we deal with are oriented. Under a change of orientation on a component of a link $L$, any chord diagram with a foot on that component has its coefficient being multiplied by $-1$. The map on chord diagrams in $\hatA(\coprod S^1)$ that effects this change is denoted by $S_r$ for the $r$-th component of the link $L$ whose orientation is being changed, $1 \leq r \leq q$ ~\cite{LM3}. We generalize this to tangle chord diagrams: $S_r$ is the map $\hat{A}(\coprod_{1 \leq l \leq q}K_l) \rightarrow \hat{A}(\coprod_{1 \leq l\neq r \leq q}K_l + S_r K_r)$ induced from a change of orientation $S_rK_r$ on $K_r$. The book notation should reflect this change. Pictorially, the following local tangle chord diagram:
\beq
\setlength{\unitlength}{0.3cm}
\begin{picture}(18,13)
\put(6,0){\line(0,1){13}}
\put(12,0){\line(0,1){13}}
\thicklines
\put(3,2){\line(0,1){7}}
\put(3,7){\vector(0,1){1}}
\put(14,2){\line(0,1){7}}
\put(14,7){\vector(0,1){1}}
\multiput(3,5)(0.5,0){22}{\line(1,0){0.2}}
\multiput(8,8)(1,0){3}{\circle*{0.2}}
\put(5,1){$\text{\small n}$}
\put(13,1){$\text{\small p}$}
\put(1,9){\circle{1.5}}
\put(0.7,8.7){$r$}
\put(16,9){\circle{1.5}}
\put(15.7,8.7){$t$}
\end{picture}
\eeq
\\
is represented in book notation by:
\beq
\setlength{\unitlength}{0.4cm}
\begin{picture}(14,13)
\put(2,10){\line(1,-1){4}}
\put(7,5){\line(1,-1){4}}
\put(7,10){\line(1,-1){4}}
\put(2,5){\line(1,-1){4}}
\thicklines
\put(2,5){\oval(2,12)[l]}
\put(12,5){\oval(2,12)[r]}
\put(0,3){$t$}
\put(0,8){$r$}
\put(4,12){$r$}
\put(9,12){$t$}
\put(7,1){\line(0,1){4}}
\put(7,1){\line(1,0){4}}
\put(7,5){\line(1,0){4}}
\put(11,1){\line(0,1){4}}
\put(6.5,1.8){$\text{\tiny p}$}
\put(6.5,4){$\text{\tiny n}$}
\put(10,5.3){$\text{\tiny p}$}
\put(8,5.3){$\text{\tiny n}$}
\put(2,1){\line(0,1){4}}
\put(2,1){\line(1,0){4}}
\put(2,5){\line(1,0){4}}
\put(6,1){\line(0,1){4}}
\put(3,1.8){$1$}
\put(1.5,1.8){$\text{\tiny p}$}
\put(1.5,4){$\text{\tiny n}$}
\put(5,5.3){$\text{\tiny p}$}
\put(3,5.3){$\text{\tiny n}$}
\put(2,6){\line(0,1){4}}
\put(2,6){\line(1,0){4}}
\put(2,10){\line(1,0){4}}
\put(6,6){\line(0,1){4}}
\put(1.5,6.8){$\text{\tiny p}$}
\put(1.5,9){$\text{\tiny n}$}
\put(5,10.3){$\text{\tiny p}$}
\put(3,10.3){$\text{\tiny n}$}
\put(7,6){\line(0,1){4}}
\put(7,6){\line(1,0){4}}
\put(7,10){\line(1,0){4}}
\put(11,6){\line(0,1){4}}
\put(6.5,6.8){$\text{\tiny p}$}
\put(10,9){$1$}
\put(6.5,9){$\text{\tiny n}$}
\put(10,10.3){$\text{\tiny p}$}
\put(8,10.3){$\text{\tiny n}$}
\end{picture}
\eeq
\\
where without loss of generality we have chosen $r<t$.
The same tangle chord diagram with the reverse orientation on the $r$-th component:
\beq
\setlength{\unitlength}{0.3cm}
\begin{picture}(18,13)
\put(6,0){\line(0,1){13}}
\put(12,0){\line(0,1){13}}
\thicklines
\put(3,2){\line(0,1){7}}
\put(3,7){\vector(0,-1){1}}
\put(14,2){\line(0,1){7}}
\put(14,7){\vector(0,1){1}}
\multiput(3,5)(0.5,0){22}{\line(1,0){0.2}}
\multiput(8,8)(1,0){3}{\circle*{0.2}}
\put(5,1){$\text{\small n}$}
\put(13,1){$\text{\small p}$}
\put(1,9){\circle{1.5}}
\put(0.7,8.7){$r$}
\put(16,9){\circle{1.5}}
\put(15.7,8.7){$t$}
\end{picture}
\eeq
\\
has book representation:
\beq
\setlength{\unitlength}{0.4cm}
\begin{picture}(14,13)
\put(2,10){\line(1,-1){4}}
\put(7,5){\line(1,-1){4}}
\put(7,10){\line(1,-1){4}}
\put(2,5){\line(1,-1){4}}
\thicklines
\put(2,5){\oval(2,12)[l]}
\put(12,5){\oval(2,12)[r]}
\put(0,3){$t$}
\put(0,8){$r$}
\put(4,12){$r$}
\put(9,12){$t$}
\put(7,1){\line(0,1){4}}
\put(7,1){\line(1,0){4}}
\put(7,5){\line(1,0){4}}
\put(11,1){\line(0,1){4}}
\put(6.5,1.8){$\text{\tiny p}$}
\put(6.5,4){$\text{\tiny n}$}
\put(10,5.3){$\text{\tiny p}$}
\put(8,5.3){$\text{\tiny n}$}
\put(2,1){\line(0,1){4}}
\put(2,1){\line(1,0){4}}
\put(2,5){\line(1,0){4}}
\put(6,1){\line(0,1){4}}
\put(2.5,1.8){$-1$}
\put(1.5,1.8){$\text{\tiny p}$}
\put(1.5,4){$\text{\tiny n}$}
\put(5,5.3){$\text{\tiny p}$}
\put(3,5.3){$\text{\tiny n}$}
\put(2,6){\line(0,1){4}}
\put(2,6){\line(1,0){4}}
\put(2,10){\line(1,0){4}}
\put(6,6){\line(0,1){4}}
\put(1.5,6.8){$\text{\tiny p}$}
\put(1.5,9){$\text{\tiny n}$}
\put(5,10.3){$\text{\tiny p}$}
\put(3,10.3){$\text{\tiny n}$}
\put(7,6){\line(0,1){4}}
\put(7,6){\line(1,0){4}}
\put(7,10){\line(1,0){4}}
\put(11,6){\line(0,1){4}}
\put(6.5,6.8){$\text{\tiny p}$}
\put(9.5,9){$-1$}
\put(6.5,9){$\text{\tiny n}$}
\put(10,10.3){$\text{\tiny p}$}
\put(8,10.3){$\text{\tiny n}$}
\end{picture}
\eeq
\\
\begin{Or}
With tangle diagrams as above, the map $\mathbf{h}_{S,r}$ that acts on $\widetilde{Z}_f(L)$ in book notation to effect the orientation change $S_r$ on the $r$-th component of $L$, is given on a page by:
\beq
\mathbf{h}_{S,r}A:=M_{S,r}^T A M_{S,r}
\eeq
where $M_{S,r}$ is the $qN \times qN$ identity matrix save for the $rr$ block which is the negative of the $N \times N$ identity matrix, $-I_N$:
\beq
\setlength{\unitlength}{0.4cm}
\begin{picture}(14,14)
\put(7,5){$1$}
\put(8,4){\circle*{0.2}}
\put(9,3){\circle*{0.2}}
\put(10,2){\circle*{0.2}}
\put(11,1){$1$}
\thicklines
\put(2,5){\oval(2,12)[l]}
\put(12,5){\oval(2,12)[r]}
\put(0,3){$t$}
\put(0,8){$r$}
\put(4,12){$r$}
\put(9,12){$t$}
\put(2,6){\line(0,1){4}}
\put(2,6){\line(1,0){4}}
\put(2,10){\line(1,0){4}}
\put(6,6){\line(0,1){4}}
\put(3,8){$-\text{\Large I}_N$}
\end{picture}
\eeq
\\
There are as many modified blocks such as the $rr$-th above as there are components whose orientation is changed. For a chord with feet on the $r$-th and $t$-th components both of whose orientations are reversed, the matrix $M_{S,r,t}$ with two blocks $rr$ and $tt$ as $-I_N$ will effect the orientation change while leaving the coefficient in front of corresponding chord diagrams unchanged since $(-1)\times (-1)=1$.
\end{Or}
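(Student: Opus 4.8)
The plan is to reduce the assertion to a one-page block-matrix identity and then multiply it across the pages of a book. Recall from Section 2.3 that the orientation-reversal map $S_{(K_r)}$ sends a chord diagram $D$ to the diagram obtained by reversing the orientation of $K_r$ and multiplying by $(-1)^{k_r(D)}$, where $k_r(D)$ is the number of chord feet of $D$ lying on $K_r$, and that $Z$ intertwines this map; the same intertwining holds for $\widetilde{Z}_f$, i.e. $\widetilde{Z}_f(S_rL)=S_r\widetilde{Z}_f(L)$. (For the $\nu$-factors entering the definition of $\widetilde{Z}_f$ this uses only that the unknot $U$ is isotopic to its orientation reverse, so $\nu=Z_f(U)^{-1}$ is unchanged under $S$; cf. \cite{RG}.) Writing $\widetilde{Z}_f(L)=\sum_{m\ge 0}\sum c_{IJUV}A_{IJUV}$, it therefore suffices to prove that for each book $A_{IJUV}$ of $m$ pages one has $M_{S,r}^{m\;T}A_{IJUV}M_{S,r}^{m}=(-1)^{k_r}A_{IJUV}$, with $k_r$ the number of chord feet of the underlying tangle chord diagram on $K_r$, and that $S_r$ of a tangle chord diagram has book given by the same multi-index $(I,J,U,V)$.

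First I would check that reversing the orientation of $K_r$ changes neither the vertical slicing of $\widetilde{L}$ nor the block-and-strip indices assigned to any chord. The image of $K_r$ in $\mathbb{C}\times\mathbb{R}$ is unchanged, and precomposing the parametrization with an orientation-reversing diffeomorphism of $S^1$ moves no critical point of $t\circ K_r$ and does not change whether a critical point is a local max or a local min; hence the tweak producing $\widetilde{L}$ (a hook at every local max but one per component) and the subsequent slicing procedure yield literally the same strips with the same labels. Consequently $S_r$ of a tangle chord diagram $X$ has book $A_{IJUV}$ with the very same indices as $X$, only sitting over the reoriented skeleton and carrying coefficient $(-1)^{k_r}c_X$, so the entire content of the Proposition is the page-wise sign bookkeeping.

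Next comes the matrix computation. The matrix $M_{S,r}$ is block diagonal with $(l,l)$-block $I_N$ for $l\ne r$ and $-I_N$ for $l=r$; in particular $M_{S,r}^T=M_{S,r}$, and for any $qN\times qN$ matrix $A$ the $(s,t)$-block of $M_{S,r}^{T}AM_{S,r}$ equals $\epsilon_s\epsilon_t$ times the $(s,t)$-block of $A$, where $\epsilon_l=-1$ if $l=r$ and $\epsilon_l=1$ otherwise. A page $A_{s,t,n,p}$ has its only nonzero entries, both equal to $1$, in the $(s,t)$- and $(t,s)$-blocks, so $M_{S,r}^{T}A_{s,t,n,p}M_{S,r}=\epsilon_s\epsilon_t\,A_{s,t,n,p}$, and $\epsilon_s\epsilon_t$ is exactly $(-1)$ raised to the number of feet of that chord on $K_r$: it is $+1$ when neither $s$ nor $t$ equals $r$, it is $-1$ when exactly one equals $r$ (as the pictures preceding the statement show, with a $1$ becoming $-1$), and it is $(-1)(-1)=+1$ when $s=t=r$, i.e. for a self-chord on $K_r$, which has two feet on $K_r$. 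Since $M_{S,r}^{m}$ acts on a book of $m$ pages page by page, the whole book acquires $\prod_{a=1}^{m}(-1)^{\#\{\text{feet of chord }a\text{ on }K_r\}}=(-1)^{k_r}$, which is precisely the coefficient prescribed by $S_r$; combined with the previous paragraph this gives $\mathbf{h}_{S,r}\widetilde{Z}_f(L)=M_{S,r}^{\centerdot\;T}\,\widetilde{Z}_f(L)\,M_{S,r}^{\centerdot}=\widetilde{Z}_f(S_rL)$. The multi-component statement falls out of the same identity: for $M_{S,r,t}$ with $rr$- and $tt$-blocks $-I_N$ the $(s',t')$-block is multiplied by $\epsilon_{s'}\epsilon_{t'}$ with $\epsilon_l=-1$ iff $l\in\{r,t\}$, so a chord with one foot on each of $K_r,K_t$ is fixed, matching $S_rS_t$ contributing $(-1)^1(-1)^1=1$ on such a chord.

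I do not anticipate a real obstacle: the only non-formal input is the intertwining $\widetilde{Z}_f\circ S_r=S_r\circ\widetilde{Z}_f$ together with the orientation-invariance of $\nu$, both available from \cite{RG} and the discussion in Section 2.3, while everything else is the elementary block-matrix identity above. The one point to state carefully is that the vertical slicing of $\widetilde{L}$ is orientation-blind, so that the books before and after the orientation change use identical matrix data and differ only by the scalar $(-1)^{k_r}$ produced by the conjugation.
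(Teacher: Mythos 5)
Your proof is correct and is essentially the paper's own argument — the paper's proof consists of the two words ``Matrix multiplication,'' and your block-conjugation identity $M_{S,r}^{T}A\,M_{S,r}=\epsilon_s\epsilon_t A$ on each $(s,t)$-block is exactly that computation, with the sign count $\epsilon_s\epsilon_t=(-1)^{\#\{\text{feet on }K_r\}}$ matching the $(-1)^m$ in the definition of $S_{(C)}$. The additional points you make explicit — that the vertical slicing of $\widetilde{L}$ is orientation-blind so the multi-indices are unchanged, and that $\widetilde{Z}_f$ intertwines $S_r$ — are left implicit in the paper (the latter is only proved in the following subsection), so spelling them out is a reasonable strengthening rather than a departure.
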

\begin{proof}
Matrix multiplication.
\end{proof}

\subsection{Behavior of $\widetilde{Z}_f(L)$ as $K_i$ is being subtracted from $K_j$}
We now consider the effect of having a subtraction of $K_i$ from $K_j$ as a result of operating a band sum move of $K_i$ over $K_j$. As defined in ~\cite{RKI}, this corresponds to having a band such that upon doing the band sum move the components $K_i$ and $K_j$ end up having opposite orientations. If we locally represent the pieces of those two components on which the foot of a given chord rests as in:
\beq
\setlength{\unitlength}{0.4cm}
\begin{picture}(5,10)
\thicklines
\put(0,0){\line(0,1){8}}
\put(4,0){\line(0,1){8}}
\multiput(0.2,4)(0.4,0){10}{\circle*{0.2}}
\put(0,2){\vector(0,1){1}}
\put(4,2){\vector(0,1){1}}
\put(-1,6){\circle{1}}
\put(-1,5.8){$\text{\tiny i}$}
\put(5,6){\circle{1}}
\put(5,5.8){$\text{\tiny j}$}
\end{picture}
\eeq
\\
then it behaves as follows under a band sum move of $K_i$ over $K_j$:
\beq
\setlength{\unitlength}{0.4cm}
\begin{picture}(9,10)
\thicklines
\put(0,0){\line(0,1){6}}
\put(0,7){\line(0,1){2}}
\put(0,6){\line(1,0){4}}
\put(0,7){\line(1,0){4}}
\put(4,0){\line(0,1){3}}
\put(4,5){\line(0,1){1}}
\put(4,7){\line(0,1){2}}
\put(5,0){\line(0,1){3}}
\put(5,5){\line(0,1){4}}
\multiput(0.2,4)(0.3,0){10}{\circle*{0.2}}
\put(3,3){\line(1,0){3}}
\put(3,3){\line(0,1){2}}
\put(6,3){\line(0,1){2}}
\put(3,5){\line(1,0){3}}
\put(4,3.5){$\Delta$}
\put(0,2){\vector(0,1){1}}
\put(4,2){\vector(0,-1){1}}
\put(5,2){\vector(0,1){1}}
\put(-1,6){\circle{1}}
\put(-1,5.8){$\text{\tiny i}$}
\put(6,6){\circle{1}}
\put(6,5.8){$\text{\tiny j}$}
\end{picture}=
\setlength{\unitlength}{0.4cm}
\begin{picture}(9,10)(-1,0)
\thicklines
\put(0,0){\line(0,1){6}}
\put(0,7){\line(0,1){2}}
\put(0,6){\line(1,0){4}}
\put(0,7){\line(1,0){4}}
\put(4,0){\line(0,1){6}}
\put(4,7){\line(0,1){2}}
\put(5,0){\line(0,1){9}}
\put(5,5){\line(0,1){4}}
\multiput(0.2,4)(0.4,0){10}{\circle*{0.2}}
\put(0,2){\vector(0,1){1}}
\put(4,2){\vector(0,-1){1}}
\put(5,2){\vector(0,1){1}}
\put(-1,6){\circle{1}}
\put(-1,5.8){$\text{\tiny i}$}
\put(6,6){\circle{1}}
\put(6,5.8){$\text{\tiny j}$}
\end{picture}+
\setlength{\unitlength}{0.4cm}
\begin{picture}(9,10)(-1,0)
\thicklines
\put(0,0){\line(0,1){6}}
\put(0,7){\line(0,1){2}}
\put(0,6){\line(1,0){4}}
\put(0,7){\line(1,0){4}}
\put(4,0){\line(0,1){6}}
\put(4,5){\line(0,1){1}}
\put(4,7){\line(0,1){2}}
\put(5,0){\line(0,1){9}}
\put(5,5){\line(0,1){4}}
\multiput(0.2,4)(0.4,0){12}{\circle*{0.2}}
\put(0,2){\vector(0,1){1}}
\put(4,2){\vector(0,-1){1}}
\put(5,2){\vector(0,1){1}}
\put(-1,6){\circle{1}}
\put(-1,5.8){$\text{\tiny i}$}
\put(6,6){\circle{1}}
\put(6,5.8){$\text{\tiny j}$}
\end{picture}
\eeq
\\
If the latter tangle chord diagram has a coefficient of $c_X$ in the expression for $\widetilde{Z}_f$, the former has a coefficient $-c_X$ however due to the orientation change resulting in the Kontsevich integral picking up an overall minus sign. It follows that the statement of Theorem 1.5 is no longer true in this case. We can nevertheless remedy this as follows. Using the map $S_{(C)}$ introduced in ~\eqref{SC}, we denote by $S_{K_{i/j}}$ the map that switches the orientation on $K_i$ (or $K_j$) and we write $L''=S_{K_{i/j}}L$. Theorem 4 of ~\cite{LM4} states that in this case we have:
\beq
\hat{Z}_f(L'')=S_{K_{i/j}}\hat{Z}_f(L)
\eeq
However as pointed out in \cite{RG} and \cite{RG2}, their invariant $\hat{Z}_f$ is not the framed Kontsevich integral. Nevertheless, using the construction of the framed Kontsevich integral based on $Z_f$ defined in \cite{LM1}, we have:
\begin{ZfSLeqSZfL}
$Z_f(S_{K_i}L)=S_{K_i}Z_f(L)$ for $1 \leq i \leq q$.
\end{ZfSLeqSZfL}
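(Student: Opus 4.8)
The plan is to exploit the multiplicativity of $Z_f$ together with the compatibility of the ordinary Kontsevich integral $Z$ with orientation reversal recalled above, namely $Z(S_{(C)}T)=S_{(C)}Z(T)$ for oriented tangles. Write $S_{(C)}$ for the reversal map on chord diagrams. The first observation is that $S_{(C)}$ distributes over the tangle product: if $D_1$ and $D_2$ can be glued strand-wise and $C$ is a strand passing through both, then $S_{(C)}(D_1\times D_2)=S_{(C)}(D_1)\times S_{(C)}(D_2)$, because the sign $(-1)^{\#\{\text{vertices on }C\}}$ is additive under concatenation and reversing orientation commutes with gluing. Slicing $L$ into elementary tangles $L=T_1\times\cdots\times T_n$, the map $S_{K_i}$ reverses, inside each $T_\ell$, exactly the strands belonging to $K_i$; so by multiplicativity of $Z_f$ it suffices to prove, for every elementary tangle $T$ and every admissible strand set $R$ (one arising from a component orientation flip), that $Z_f(T^R)=S_R Z_f(T)$, where $T^R$ is $T$ with the strands in $R$ reversed and $S_R$ the corresponding iterated reversal.

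There are two classes of elementary tangles. For a tangle $T$ with no local extremum --- a crossing, or a bundle of vertical strands --- no normalization is needed and $Z_f(T)=S\,Z(T)$ is simply the image of $Z(T)$ under the inclusion $S:\mathcal{A}\hookrightarrow\hat{\mathcal{A}}$. Since that inclusion is given by a formula on $\mathcal{D}$ that manifestly commutes with $S_R$ --- reversal neither creates nor destroys isolated chords --- and since $Z(T^R)=S_R Z(T)$ by the cited property, we obtain $Z_f(T^R)=S\,Z(T^R)=S\,S_R Z(T)=S_R\,S\,Z(T)=S_R Z_f(T)$.

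For a local maximum write $\kappa$ for the degree-zero cap chord diagram, so that $Z_f(T_a)=\kappa\times S\,Z(T_a^{\text{1-resolved}})$ (with the mirror formula at a local minimum). The key geometric point is that reversing a link component reverses \emph{both} arcs of each of its local extrema, so the only admissible $R$ at such a tangle is the full pair of strands; consequently the single-chord normalization diagram $\omega$, which has one foot on each arc of the extremum, is carried by $S_R$ to $(-1)^2\omega=\omega$, i.e. is $S_R$-invariant, and the degree-zero factor $\kappa$ is carried to the reversed cap, so $S_R(\kappa)=\kappa^R$. Hence $Z_f(T_a^R)=\kappa^R\times S\,Z\bigl((T_a^{\text{1-resolved}})^R\bigr)=S_R(\kappa)\times S\,S_R Z(T_a^{\text{1-resolved}})=S_R\bigl(\kappa\times S\,Z(T_a^{\text{1-resolved}})\bigr)=S_R Z_f(T_a)$, using again that $Z$ commutes with reversal, that $S$ commutes with $S_R$, and that $S_R$ distributes over $\times$. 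Running the same computation through the defining limit $Z_f(T_a)=\lim_{\mu\to 0}\mu^{\omega/(2\pi i)}\times S\,Z(T_a^\mu)$ shows that the normalization factor $\mu^{\omega/(2\pi i)}$ passes through $S_R$ untouched rather than being inverted. Reassembling the elementary pieces by multiplicativity of $Z_f$ gives $Z_f(S_{K_i}L)=S_{K_i}Z_f(L)$, and iterating over components yields the general statement.

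The step I expect to be the main obstacle is the sign bookkeeping at the local extrema: one must verify carefully that every strand reversal coming from a component orientation flip hits $\omega$ with an \emph{even} sign, so that the exponential normalization $\mu^{\omega/(2\pi i)}=\exp\bigl(\tfrac{\omega}{2\pi i}\log\mu\bigr)$ is genuinely fixed rather than replaced by its inverse, and one must check that $S_R$ commutes with the formal limit $\mu\to 0$ and with the product structure $\times$ on $\hat{\mathcal{A}}$-valued tangle chord diagrams. Once these compatibilities are secured, the proposition reduces, as above, to the already-established behaviour of the unframed integral $Z$ under $S_{(C)}$.
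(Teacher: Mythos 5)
Your argument is correct and is essentially the paper's own proof, which is stated in one line as "this follows from the definition of $Z_f$ and the fact that $Z(S_{K_i}L)=S_{K_i}Z(L)$"; you have simply supplied the details that make that one line work, in particular the observation that the normalization chord $\omega$ has both feet on the reversed component and so is fixed by $S_R$ up to the sign $(-1)^2=1$, leaving $\mu^{\omega/(2\pi i)}$ untouched. No further comment is needed.
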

\begin{proof}
This follows from the definition of $Z_f$ and the fact that $Z(S_{K_i}L)=S_{K_i}Z(L)$.
\end{proof}
\begin{ZfhSLeqSZfhL}
$\hat{Z}_f(L'')=S_{K_i}\hat{Z}_f(L)$
\end{ZfhSLeqSZfhL}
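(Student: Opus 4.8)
The plan is to peel off the $\nu$-normalization, apply the analogous statement for $Z_f$ (Proposition~\ref{ZfSLeqSZfL}), and then reinstate the normalization, checking that the powers of $\nu$ are transported by $S_{K_i}$ without obstruction. First I would fix a Morse diagram $\mathcal{D}$ of $L$ and set $\mathcal{D}'' := S_{K_i}\mathcal{D}$, the corresponding diagram of $L'' = S_{K_i}L$, which differs from $\mathcal{D}$ only in the orientation of the $i$-th component. Since the maximal points of a component with respect to the height function $t$ are unaffected by its orientation, the number $m_l$ of maxima of the $l$-th component of $\mathcal{D}''$ equals that of $\mathcal{D}$ for every $l$. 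Hence, by the defining formula~\eqref{cumb}, proving the proposition amounts to proving
\beq
Z_f(\mathcal{D}'')\cdot(\nu^{m_1}\otimes\cdots\otimes\nu^{m_q})\;=\;S_{K_i}\Big(Z_f(\mathcal{D})\cdot(\nu^{m_1}\otimes\cdots\otimes\nu^{m_q})\Big).
\eeq

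By Proposition~\ref{ZfSLeqSZfL}, $Z_f(\mathcal{D}'') = Z_f(S_{K_i}\mathcal{D}) = S_{K_i}Z_f(\mathcal{D})$, so it remains to verify that $S_{K_i}$ commutes with the action of $\nu^{m_1}\otimes\cdots\otimes\nu^{m_q}$ on $\overline{\hatA}(\amalg^q S^1)$. Here I would use that this action is by connected sum of $\nu^{m_l}$ into the $l$-th circle. For $l\neq i$ the corresponding operator plainly commutes with $S_{K_i}$, as the two act on disjoint circles. For $l=i$, connected sum is a local operation, and both $S_{K_i}$ and the single-circle reversal map $S$ of~\eqref{SC} multiply by the sign $(-1)^{(\text{number of feet on the reversed circle})}$, which is additive under connected sum; consequently $S_{K_i}(X\,\#_i\,D) = (S_{K_i}X)\,\#_i\,(SD)$ for every $D\in\overline{\hatA}(S^1)$. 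Since $S$ respects connected sum, taking $D=\nu^{m_i}$ gives $S_{K_i}(X\,\#_i\,\nu^{m_i}) = (S_{K_i}X)\,\#_i\,(S\nu)^{m_i}$. Everything therefore reduces to the single identity $S\nu = \nu$ in $\overline{\hatA}(S^1)$.

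Establishing $S\nu=\nu$ is the one step that is not purely formal, and I expect it to be the main point. I would argue as follows: since $\nu = Z_f(U)^{-1}$ for $U$ the double-hump Morse unknot and $S$ is an algebra map for connected sum (reversing orientation distributes over connected sum and fixes the degree-$0$ part), it is enough to show $SZ_f(U)=Z_f(U)$. By Proposition~\ref{ZfSLeqSZfL} applied to the knot $U$, $SZ_f(U) = Z_f(S_K U)$, where $S_K U$ is $U$ with reversed orientation. But the unknot is invertible, so $S_K U$ is ambient isotopic to $U$ as a $0$-framed oriented knot; by the isotopy invariance of $\hat{Z}_f$ (\cite{RG}), $\hat{Z}_f(S_K U)=\hat{Z}_f(U)$, and since $S_K U$ and $U$ have the same number of maxima and the common factor $\nu^{m}$ is invertible, this forces $Z_f(S_K U)=Z_f(U)$. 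Hence $SZ_f(U)=Z_f(U)$ and $S\nu=\nu$. Combining the three steps yields $\hat{Z}_f(L'')=S_{K_i}\hat{Z}_f(L)$; running the same argument once more with $K_j$ in place of $K_i$ gives the simultaneous reversal of both components needed for the subtraction move.
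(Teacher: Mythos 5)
Your proposal is correct and follows essentially the same route as the paper: express $\hat{Z}_f$ via the normalization \eqref{cumb}, apply Proposition 5.1 ($Z_f(S_{K_i}L)=S_{K_i}Z_f(L)$), and pull $S_{K_i}$ through the tensor factors $\nu^{m_l}$. The only difference is that you explicitly establish the identity $S\nu=\nu$ (via the invertibility of the unknot and isotopy invariance), a step the paper's computation uses implicitly when it passes from $(S_{K_i}\nu)^{m_i}$ back to $\nu^{m_i}$; this is a welcome clarification rather than a divergence.
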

\begin{proof}
It suffices to write:
\begin{align}
\hat{Z}_f(S_{K_i}L)&=\nu^{m_1} \otimes \cdots \otimes \nu ^{m_i} \otimes \cdots \otimes \nu ^{m_q} \cdot Z_f(S_{K_i}L) \\
&=\nu^{m_1} \otimes \cdots \otimes S_{K_i}S_{K_i}\nu ^{m_i} \otimes \cdots \otimes \nu ^{m_q} \cdot S_{K_i}Z_f(L) \\
&=S_{K_i} \Big( \nu^{m_1} \otimes \cdots \otimes (S_{K_i}\nu) ^{m_i} \otimes \cdots \otimes \nu ^{m_q} \cdot Z_f(L)\Big) \\
&=S_{K_i} \hat{Z}_f(L)
\end{align}
\end{proof}
\begin{ZftSLeqSZftL} \label{Zft}
$\widetilde{Z}_f(L'')=S_{K_i}\widetilde{Z}_f(L)$
\end{ZftSLeqSZftL}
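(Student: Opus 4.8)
The plan is to deduce this from the preceding theorem ($\hat Z_f(S_{K_i}L)=S_{K_i}\hat Z_f(L)$) by stripping off the extra factors of $\nu^{-1}$ that distinguish $\widetilde Z_f$ from $\hat Z_f$. Recall that, by definition, $\widetilde Z_f(L)$ is $\hat Z_f(L)$ with every link component connect-summed with one further copy of $\nu^{-1}$; regarding $\overline{\hatA}(\amalg^q S^1)$ as a module over $\otimes^q\overline{\hatA}$, this reads
\[
\widetilde Z_f(L)=\big(\nu^{-1}\otimes\cdots\otimes\nu^{-1}\big)\centerdot\hat Z_f(L).
\]

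The two facts I would isolate first concern $S=S_{K_i}$: (i) $S_{K_i}^2=\mathrm{id}$, and $S_{K_i}$ is compatible with the $\otimes^q\mathcal A$-module structure, so that $S_{K_i}(x\centerdot_i y)=S_{K_i}(x)\centerdot_i S_{K_i}(y)$ for connected sum along $K_i$, while $S_{K_i}$ commutes with connect-summing along any other component; (ii) $S\nu=\nu$, hence $S\nu^{-1}=\nu^{-1}$. Fact (ii) is immediate from the preceding proposition ($Z_f(S_{K_i}L)=S_{K_i}Z_f(L)$) applied to $L=U$ together with the invertibility of the unknot: $S\big(Z_f(U)\big)=Z_f(S_{S^1}U)=Z_f(\bar U)=Z_f(U)$, whence $S\nu=(SZ_f(U))^{-1}=\nu$. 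Fact (i) is exactly the compatibility of $S$ with the module action already invoked implicitly in the proof of the preceding theorem, and can be cited from \cite{LM5}.

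With these in hand the computation is the same bookkeeping as for $\hat Z_f$. Writing $L''=S_{K_i}L$,
\begin{align*}
\widetilde Z_f(L'')
 &=\big(\nu^{-1}\otimes\cdots\otimes\nu^{-1}\big)\centerdot\hat Z_f(S_{K_i}L)\\
 &=\big(\nu^{-1}\otimes\cdots\otimes\nu^{-1}\big)\centerdot S_{K_i}\hat Z_f(L)\\
 &=\big(\nu^{-1}\otimes\cdots\otimes S_{K_i}\nu^{-1}\otimes\cdots\otimes\nu^{-1}\big)\centerdot S_{K_i}\hat Z_f(L)\\
 &=S_{K_i}\Big(\big(\nu^{-1}\otimes\cdots\otimes\nu^{-1}\big)\centerdot\hat Z_f(L)\Big)\\
 &=S_{K_i}\widetilde Z_f(L),
\end{align*}
where the second equality is the preceding theorem, the third inserts the harmless factor $S_{K_i}$ on the $i$-th slot by (ii), and the fourth pulls $S_{K_i}$ outside the module action by (i).

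The only step needing any care — and the only candidate for a genuine obstacle — is property (i): that connect-summing $\nu^{-1}$ into $K_i$ and then reversing the orientation of $K_i$ yields the same element as reversing the orientation first and then connect-summing $S\nu^{-1}=\nu^{-1}$. Since this is precisely the compatibility of $S$ with the $\otimes^q\mathcal A$-module structure that already underlies the proof for $\hat Z_f$, nothing new is required, and the argument is otherwise purely formal.
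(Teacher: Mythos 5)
Your argument is correct and is essentially the paper's own: the paper's proof is literally ``Same as for the previous Theorem,'' i.e.\ the same formal manipulation of $S_{K_i}$ through the $\otimes^q\hatA$-module action and the $\nu$ factors. Your version merely makes explicit the two facts the paper uses implicitly ($S_{K_i}^2=\mathrm{id}$ with compatibility of $S$ with the connected-sum action, and $S\nu=\nu$, which you justify correctly from $Z_f(S_{S^1}U)=S Z_f(U)$ and $\bar U\cong U$), which is a welcome clarification but not a different route.
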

\begin{proof}
Same as for the previous Theorem.
\end{proof}
\newpage
Then we consider the following diagram:\\
\setlength{\unitlength}{0.5cm}
\begin{picture}(15,5)(-6,3)
\put(0,5){\line(0,-1){4}}
\put(0,2){\vector(0,-1){1}}
\put(4,0){\line(1,0){4}}
\put(7,0){\vector(1,0){1}}
\put(15,1){\line(0,1){4}}
\put(15,4){\vector(0,1){1}}
\multiput(2,6)(0.4,0){15}{\line(1,0){0.15}}
\put(7.8,6){\vector(1,0){0.15}}
\put(-6,0){$\widetilde{Z}_f(S_{K_{i/j}}L)=\sum c_X \cdot X $}
\put(-2,6){$\widetilde{Z}_f(L)$}
\put(9,0){$\sum c_X \cdot X'=\widetilde{Z}_f([S_{K_{i/j}}L]')$}
\put(9,6){$\widetilde{Z}_f(L')=\widetilde{Z}_f(S_{K_{i/j}}[S_{K_{i/j}}L]')$}
\put(-2,3){$S_{K_{i/j}}$}
\put(15.5,3){$S_{K_{i/j}}$}
\put(4,-1){$\text{\tiny band sum move}$}
\end{picture}\\
\\
\\
\\
\\
\\
First applying the map $S_{K_{i/j}}$ to $\widetilde{Z}_f(L)$ we obtain $\widetilde{Z}_f(S_{K_{i/j}}L)$ by Theorem \ref{Zft}. We write this quantity as $\sum c_X \cdot X$. We can apply Proposition 1.5 for $S_{K_{i/j}}L$ since then the band sum move results in $K_i$ being added to $K_j$. We obtain $\sum c_X \cdot X'$ under the band sum move. This equals $\widetilde{Z}_f([S_{K_{i/j}}L]')$. By further reversing the orientation of $K_i$ (or $K_j$), we get, using Theorem \ref{Zft} again, $\widetilde{Z}_f(S_{K_{i/j}}[S_{K_{i/j}}L]')$. Since $S_{K_{i/j}}[S_{K_{i/j}}L]'=L'$, this last quantity is $\widetilde{Z}_f(L')$. We have that the band sum move in the event of a subtraction is given by closing the above diagram, and this corresponds to the composition of $S_{K_{i/j}}$, a band sum, and $S_{K_{i/j}}$.\\

\section{Recovering $Z_f(L) \in \overline{\mathcal{A}}(\amalg S^1)$ from $Z_f(L)$ in book notation}

We prove that once we have the expression $Z_f(L)$ written in book notation, then we can recover $Z_f(L)$ as an element of $\overline{\mathcal{A}}(\amalg S^1)$. While doing so, we prove that $Z_f$ in book notation is a faithful map on framed links, that is given an expression $Z_f(L)$ for some link $L$ yet to be determined, we can unambiguously determine what the link $L$ is.\\

A first important result that will have far reaching consequences follows from a stronger result than the following Proposition:
\begin{Zfta} (\cite{RG})
If $K_i$ and $K_j$ are unlinked:
\beq
\hat{Z}_f(K_i \amalg K_j)=\hat{Z}_f(K_i) + \hat{Z}_f(K_j)
\eeq
\end{Zfta}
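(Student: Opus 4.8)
The plan is to exploit the multiplicativity of $\hat{Z}_f$ together with the fact that the disjoint union of two unlinked components can be realized as a connected sum of (copies of) chord-diagram algebras. First I would recall that $\hat Z_f$ is multiplicative and an isotopy invariant (established in \cite{RG}), and that if $K_i$ and $K_j$ are unlinked then $K_i \amalg K_j$ is isotopic to a diagram in which $K_i$ occupies a horizontal strip strictly above (or beside) a horizontal strip containing $K_j$, with no chord of positive degree stretching between the two components in $Z_f$. The key point is that for such a split diagram, every applicable pairing $P$ occurring in the integral $Z_f(K_i\amalg K_j)$ has all its chords entirely on $K_i$ or entirely on $K_j$: a pairing with one foot on each component would force two points at the same height $t$ on components that can be arranged to occupy disjoint $t$-ranges, hence contributes nothing.

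\textbf{Key steps.} (1) Arrange $K_i$ and $K_j$ to have disjoint height ranges, so $K_i \amalg K_j = K_i \times K_j$ in the sense of stacking tangles (after closing up), and invoke multiplicativity: $\hat Z_f(K_i \amalg K_j) = \hat Z_f(K_i) \times \hat Z_f(K_j)$ where now the product lives in $\overline{\hatA}(S^1 \amalg S^1)$, i.e.\ $\hatA(S^1)\otimes \hatA(S^1)$ acting on the two-component skeleton. (2) Observe that in $\hatA(S^1)\otimes\hatA(S^1)$, writing $\hat Z_f(K_i) = 1 + (\text{higher degree on }K_i)$ and similarly for $K_j$, the product is $\hat Z_f(K_i)\otimes 1 \;\cdot\; 1\otimes \hat Z_f(K_j)$; because no chord connects the two components, this equals $\hat Z_f(K_i)\otimes 1 + 1\otimes \hat Z_f(K_j) - 1\otimes 1$ would be the naive guess, but the correct bookkeeping uses the \emph{framing independence / isolated chord} structure. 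Here the cleaner route is: the degree-$m$ part of $\hat Z_f(K_i\amalg K_j)$ is the sum over $m = m_1 + m_2$ of (degree-$m_1$ part on $K_i$) $\times$ (degree-$m_2$ part on $K_j$), and since diagrams on two disjoint circles with all chords confined to one circle satisfy no mixing relations, one identifies this as the image of $\hat Z_f(K_i)$ under the inclusion $\hatA(S^1)\hookrightarrow\hatA(S^1\amalg S^1)$ adding an empty circle, plus the analogous term for $K_j$, provided one checks the degree-$0$ terms are not double counted. (3) Handle the constant term: both $\hat Z_f(K_i)$ and $\hat Z_f(K_j)$ have leading term the empty diagram with coefficient $1$; under the additive identification on the right-hand side the empty diagram on $S^1\amalg S^1$ should appear with coefficient $1$, so one must verify the statement is really $\hat Z_f(K_i\amalg K_j) = \hat Z_f(K_i) + \hat Z_f(K_j) - (\text{empty diagram})$, or else that the paper's convention absorbs this; I would follow the normalization in \cite{RG} and note the empty-diagram coefficient on each side matches by direct inspection.

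\textbf{Main obstacle.} The delicate part is \emph{not} the vanishing of mixed pairings (that is immediate from the disjoint-height arrangement and the structure of the Kontsevich integrand) but rather the precise algebraic identification of $\hatA(S^1)\times\hatA(S^1)$-product of the two split integrals with the \emph{sum} in $\overline{\hatA}(S^1\amalg S^1)$, including the correct treatment of the unit/empty-diagram terms and the $\nu^{m_1},\nu^{m_2}$ correction factors that define $\hat Z_f$ on each component. One must check that the per-component $\nu$-corrections in $\hat Z_f(K_i\amalg K_j)$ split exactly as $\nu^{m_i}$ on the $K_i$-circle and $\nu^{m_j}$ on the $K_j$-circle, which follows since each $\nu^{m_l}$ acts by connected sum only on its own component (as recalled after \eqref{cumb}), and that the product-versus-sum discrepancy is confined to the degree-$0$ piece. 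So the proof reduces to: multiplicativity $+$ disjointness of supports $+$ careful accounting of the constant term, with the last being the only place a reader might object.
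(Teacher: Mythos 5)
The paper does not actually reprove this proposition: it is quoted from \cite{RG}, and the only indication given of the argument is that it was ``proved \ldots by using the isotopy invariance of $\hat{Z}_f$ to move components around.'' Your opening move --- isotope $K_i$ and $K_j$ into disjoint ranges of the height coordinate so that no applicable pairing can place a chord with one foot on each component --- is that same separation idea, and it does correctly kill all \emph{connecting} chords. The problem is in your steps (2)--(3). After separation, multiplicativity gives
\[
\hat{Z}_f(K_i\amalg K_j)=\hat{Z}_f(K_i)\otimes\hat{Z}_f(K_j),
\]
i.e.\ the sum over bidegrees $(m_1,m_2)$ of degree-$m_1$ diagrams supported on $K_i$ placed alongside degree-$m_2$ diagrams supported on $K_j$, with multiplied coefficients. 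The right-hand side of the proposition, with each summand extended by zero to $\overline{\hat{\mathcal{A}}}(S^1\amalg S^1)$, contains only the bidegrees $(m,0)$ and $(0,m)$. So the identity you are after is equivalent to
\[
\bigl(\hat{Z}_f(K_i)-1\bigr)\otimes\bigl(\hat{Z}_f(K_j)-1\bigr)=0 ,
\]
the vanishing of every cross term with $m_1\geq 1$ and $m_2\geq 1$. These cross terms are pairings with some chords wholly on $K_i$ and others wholly on $K_j$; they are perfectly applicable in the disjoint-height configuration, their iterated integrals factor into the product of the two one-component integrals, and they are generically nonzero (already in degree two: one self-chord on each circle, with coefficient the product of the two degree-one coefficients, nonzero for nonzero framings).

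Your proposal only flags the $(0,0)$ bidegree --- the double-counted empty diagram --- as the place ``a reader might object.'' That is a symptom of the same product-versus-sum discrepancy, not the whole of it, and neither disjointness of supports nor the per-component splitting of the $\nu^{m_l}$ corrections makes the higher cross terms disappear. Nothing in your argument supplies this vanishing, so the reduction to ``multiplicativity $+$ disjointness $+$ constant-term accounting'' does not close the proof; the entire content of the proposition lives exactly in the step you left as bookkeeping.
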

This statement was proved in \cite{RG} by using the isotopy invariance of $\hat{Z}_f$ to move components around. A stronger result can be proved: the result holds for the bare framed Kontsevich integral as well.
\begin{bareZfta} \label{bareZfta}
If $K_0$ is a knot unlinked from a link $L$, then:
\beq
Z_f(K_0 \amalg L)=Z_f(K_0)+Z_f(L)
\eeq
\end{bareZfta}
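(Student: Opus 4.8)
The plan is to prove the stronger statement about the bare framed Kontsevich integral $Z_f$ directly from its definition, without invoking the normalization corrections $\nu^{m_i}$ that enter $\hat Z_f$ (which is what forced the argument in \cite{RG} to go through $\hat Z_f$). Since $K_0$ is unlinked from $L$, we may first use the isotopy invariance of $Z_f$ to place $K_0$ and $L$ in disjoint vertical slabs of $\mathbb{C}\times I$, so that for every time $t$ the strands of $K_0$ and the strands of $L$ occupy disjoint ranges of $z$-values and moreover can be arranged to have all their local extrema at distinct heights. Then I would analyze the defining series
\beq
Z_f(K_0\amalg L)=\sum_{m\ge 0}\frac{1}{(2\pi i)^m}\int_{t_{\min}<t_1<\cdots<t_m<t_{\max}}\sum_{P\text{ applicable}}(-1)^{\varepsilon(P)}(K_0\amalg L)_P\prod_{1\le i\le m}\dlog\varz[P_i],
\eeq
together with the corresponding local extremum normalizations from Definition \ref{DefNormZf} and Proposition \ref{Zfmultitop}, and split each applicable pairing $P$ according to how many of its chords are ``mixed'', i.e.\ have one foot on $K_0$ and one foot on $L$.

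The key step is to show that the contribution of every pairing containing at least one mixed chord vanishes. The standard mechanism (the same one used to kill long chords ending on a hump in the discussion preceding Proposition \ref{Zhumpvert}) is the following: because $K_0$ and $L$ lie in separated slabs, a chord foot on $K_0$ sits on a strand that, over the relevant time interval, can be homotoped to a vertical line; the contribution of such a mixed chord then comes with a $\dlog\varz$ factor whose real part is $d\log|z_{K_0}-z_L|$ and whose imaginary part is $d\arg(z_{K_0}-z_L)$, and by pairing each configuration with the one obtained by sliding that foot to the other branch of the nearest local extremum of $K_0$ (an extremum exists because $K_0$ is a closed Morse knot, so every strand of $K_0$ is bounded above and below by extrema of $K_0$), the two log-differentials differ by a sign while the chord diagrams agree, so the contributions cancel in pairs. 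Equivalently, one can invoke the Long Chords Lemma (\cite{ChDu}), which is exactly designed to say that in the separated-slab position all pairings with a mixed chord integrate to zero; the hypothesis in the statement of Proposition \ref{Goodbehavior} that the band sum is ``far away from the rest of the link that we may be able to use the Long Chords Lemma'' shows the author intends this lemma to be available. I expect this cancellation argument to be the main obstacle, mostly in making precise that the pairing-off of configurations is a genuine involution on the index set of the integral and that it respects the sign $(-1)^{\varepsilon(P)}$ and the normalization factors; this is where care is needed because a mixed chord's foot on $K_0$ may lie on a descending branch, contributing to $\varepsilon(P)$, and one must check the bookkeeping balances.

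Once the mixed pairings are eliminated, what remains is the sum over pairings $P=(P_0,P_L)$ in which $P_0$ is a pairing supported entirely on $K_0$ and $P_L$ is supported entirely on $L$, with the $m$ chords distributed in all possible interleavings of heights between the two slabs. Since the two sets of strands are at separated $z$-ranges, the integrand factors as the product of the $K_0$-integrand in the $P_0$-times and the $L$-integrand in the $P_L$-times, the sign splits as $(-1)^{\varepsilon(P_0)}(-1)^{\varepsilon(P_L)}$, and the chord diagram $(K_0\amalg L)_P$ is the disjoint union of the corresponding diagrams on $K_0$ and on $L$ with no chord connecting them. Summing over all interleavings of the time orderings and using Fubini, the double series factors; but because $K_0\amalg L$ has its circles disjoint and $\overline{\mathcal{A}}(\amalg S^1)$ has no product mixing distinct components, the ``cross terms'' (those with $|P_0|\ge 1$ and $|P_L|\ge 1$ simultaneously) are precisely diagrams that are disconnected sums and these organize, after the normalization factors at the extrema of $K_0$ and of $L$ are attached component-wise as in Proposition \ref{Zfmultitop}, into $Z_f(K_0)\amalg(\text{terms of }Z_f(L))$; here one uses that the degree-$0$ part of $Z_f$ of any knot is the empty diagram, so that the only surviving contributions are $Z_f(K_0)\cdot 1_L + 1_{K_0}\cdot Z_f(L)$, which in $\overline{\mathcal{A}}(\amalg S^1)$ is exactly $Z_f(K_0)+Z_f(L)$. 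I would close by noting that this simultaneously reproves Proposition \ref{Zfta} and its $\hat Z_f$-version upon multiplying through by the component-wise $\nu$-corrections, since those act independently on each circle.
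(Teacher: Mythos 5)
Your route is genuinely different from the paper's, and it contains a gap. The paper never touches the integral definition: it starts from the already-established additivity of $\hat{Z}_f$ on split unions (the proposition quoted from \cite{RG} immediately above), rewrites each of $Z_f(K_0)$ and $Z_f(L)$ as elements of $\overline{\mathcal{A}}(K_0\amalg L)$ by extending their coefficient functions by zero on all pairings touching the other piece, observes that the component-wise normalization $\nu^{m_0}\otimes\cdots\otimes\nu^{m_q}$ then acts uniformly on every term of that identity, and cancels these invertible factors to obtain the bare statement. You instead attempt a direct proof from the defining series by separating $K_0$ and $L$ into disjoint slabs, killing the mixed chords, and then factoring the remaining sum, planning to recover the $\hat{Z}_f$ version afterwards. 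The mixed-chord step is plausible (it is the Long Chords Lemma of \cite{ChDu}, or the pairing-off across extrema used for the hump), though it is a limiting statement that needs the isotopy invariance to be set up carefully.

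The genuine gap is in your final step. Once mixed chords are eliminated, the separated-slab computation factors the integral \emph{multiplicatively}: by exactly the Fubini and interleaving argument you describe, the coefficient of a diagram $D_0\sqcup D_L$ with $|D_0|\geq 1$ chords on $K_0$ and $|D_L|\geq 1$ chords on $L$ (none mixed) is the product $c_{D_0}\,c_{D_L}$, which is not zero in general. Your assertion that ``the only surviving contributions are $Z_f(K_0)\cdot 1_L + 1_{K_0}\cdot Z_f(L)$'' does not follow from the degree-zero part being the empty diagram; nothing in your argument makes the cross terms with both $|P_0|\geq 1$ and $|P_L|\geq 1$ disappear, and saying they ``organize into'' the sum is not a proof. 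This is precisely the point where additivity, as opposed to the grouplike product behavior one expects of a Kontsevich-type integral on a split union, has to be established, and it is exactly the content the paper imports wholesale from \cite{RG} rather than re-deriving. As written, your argument establishes a product formula, not the sum formula in the statement.
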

\begin{proof}
From the previous Proposition, we have:
\beq
\hat{Z}_f(K_0 \amalg L)=\hat{Z}_f(K_0) + \hat{Z}_f(L)
\eeq
or equivalently, writing $L=\amalg_{1 \leq i \leq q}K_i$, and $m_j$ denoting the number of local maxima of a component $K_j$, $0 \leq j \leq q$:
\begin{align}
\nu^{m_0} \otimes \nu^{m_1}\otimes & \cdots \otimes \nu^{m_q} \cdot Z_f(K_0 \amalg L) \nonumber \\
&=\nu^{m_0}Z_f(K_0) + \nu^{m_1}\otimes \cdots \otimes \nu^{m_q}Z_f(L)
\end{align}
Focusing on the first term, we can write:
\begin{align}
Z_f(K_0)&=\sum_{\substack{m \geq 0 \\ |P|=m}}c_P (K_0)_P \\
&=\sum_{\substack{m \geq 0 \\ |P|=m}}c_P (K_0)_P + \sum_{\substack{m \geq 0 \\ |P*|=m}}0 \cdot L_{P*}\\
&=\sum_{\substack{m \geq 0 \\ |P|=m}}d_P (K_0 \amalg L)_P
\end{align}
where $P^*$ are pairings that represent a tangle chord diagram with at least one chord with at least one foot on $L-K_0$, and $d_P=c_P$ if $P$ is a pairing all of whose chords lie on $K_0$, zero otherwise. This enables us to write $Z_f(K_0)$ as an element of $\overline{\mathcal{A}}(K_0 \amalg L)$ on which $\otimes_{0 \leq i \leq q}\nu^{m_i}$ can be applied:
\beq
\nu^{m_0}Z_f(K_0)=\nu^{m_0} \otimes \nu^{m_1}\otimes  \cdots \otimes \nu^{m_q}Z_f(K_0)
\eeq
viewed as an element of $\overline{\mathcal{A}}(K_0 \amalg L)$. Likewise:
\beq
\nu^{m_1}\otimes  \cdots \otimes \nu^{m_q}Z_f(L)=\nu^{m_0} \otimes \nu^{m_1}\otimes  \cdots \otimes \nu^{m_q}Z_f(L)
\eeq
viewed as an element of $\overline{\mathcal{A}}(K_0 \amalg L)$ by setting to zero all coefficients of $Z_f(L)$ for pairings not entirely lying on $L$ the way we did for $K_0$. It follows that:
\begin{align}
\nu^{m_0} \otimes \nu^{m_1}\otimes & \cdots \otimes \nu^{m_q}Z_f(K_0 \amalg L) \nonumber \\
=\nu^{m_0} \otimes \nu^{m_1}\otimes & \cdots \otimes \nu^{m_q}Z_f(K_0) + \nu^{m_0} \otimes \nu^{m_1}\otimes  \cdots \otimes \nu^{m_q}Z_f(L)
\end{align}
from which it follows that:
\beq
Z_f(K_0 \amalg L)=Z_f(K_0) + Z_f(L)
\eeq
\end{proof}
\begin{generalbareZfta} \label{generalbareZfta}
If $I$ indexes sublinks of a link $L$ that are pairwise unlinked, then denoting by $\{\text{sub}_i \;|\; i \in I \}$ the set of such sublinks, we can write:
\beq
Z_f(L)=\sum_{i \in I} Z_f(\amalg_{j \in \text{sub}_i}K_j)
\eeq
\end{generalbareZfta}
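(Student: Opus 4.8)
The plan is to argue by induction on $n = |I|$. First I record how I read the hypothesis: a family of pairwise unlinked sublinks that together exhaust (hence partition) the components of $L$ is a \emph{split} decomposition, i.e. the $\mathrm{sub}_i$ can be simultaneously isotoped into pairwise disjoint balls, so that $L = \amalg_{i \in I}\big(\amalg_{j \in \mathrm{sub}_i}K_j\big)$ as a split union. The weaker "merely pairwise split" reading would not suffice — the Borromean rings are pairwise split yet not split into three pieces — so this is the strengthening I take the statement to intend and will use. In particular any one $\mathrm{sub}_{i_0}$ is split from the union of the remaining blocks, and those remaining blocks still form a split decomposition with $n-1$ blocks. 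For $n=1$ the assertion is the tautology $Z_f(L)=Z_f(L)$.

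For the inductive step the one nontrivial input I need is a \emph{binary} version for sublinks: if $L_0$ and $L_1$ are split sublinks, then $Z_f(L_0 \amalg L_1) = Z_f(L_0) + Z_f(L_1)$ in $\overline{\mathcal{A}}(L_0 \amalg L_1)$, with $Z_f(L_0)$ and $Z_f(L_1)$ zero-padded exactly as in the proof of Proposition~\ref{bareZfta}. When $L_0$ is a single knot this is Proposition~\ref{bareZfta} itself; my plan is to verify that its proof survives verbatim with the knot $K_0$ replaced by an arbitrary sublink $L_0$. Scanning that proof, the only step that refers to the skeleton of $K_0$ is the input $\hat{Z}_f(K_0 \amalg L) = \hat{Z}_f(K_0) + \hat{Z}_f(L)$; everything downstream — rewriting $Z_f(L_0)$ as an element of $\overline{\mathcal{A}}(L_0\amalg L_1)$ by padding with zero coefficients, and then using that each $\nu^{m_i}$ acts only on the $i$-th component by connected sum so that the correction factors distribute correctly over all components of $L_0$ and of $L_1$ — is blind to how many components $L_0$ carries. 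So the real task is to upgrade the $\hat{Z}_f$-additivity to split sublinks, $\hat{Z}_f(L_0 \amalg L_1) = \hat{Z}_f(L_0) + \hat{Z}_f(L_1)$, which I would obtain as in \cite{RG}: isotope $L_0$ into a small ball of radius $\delta$ placed far, in the $z$-plane, from $L_1$; for any pairing $P$ having a chord with one foot on $L_0$ and one on $L_1$, the difference $z - z'$ entering the corresponding $\dlog$ stays within $O(\delta)$ of a fixed nonzero constant, so its integral tends to $0$ as $\delta \to 0$, while diagrams all of whose chords stay on $L_0$ (resp. on $L_1$) contribute $\hat{Z}_f(L_0)$ (resp. $\hat{Z}_f(L_1)$); multiplicativity and isotopy invariance of $\hat{Z}_f$ then give the identity. (Since $Z_f$ and $\hat{Z}_f$ determine one another through the invertible, componentwise $\nu$-corrections, one may equivalently run the ball argument directly on $Z_f$.)

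Granting the binary version, the induction closes at once: write $L = L_0 \amalg L_1$ with $L_0 = \amalg_{j \in \mathrm{sub}_{i_0}}K_j$ and $L_1 = \amalg_{i \neq i_0}\amalg_{j \in \mathrm{sub}_i}K_j$; then $Z_f(L) = Z_f(L_0) + Z_f(L_1)$, and applying the induction hypothesis to the $(n-1)$-block split decomposition of $L_1$ gives $Z_f(L_1) = \sum_{i \neq i_0} Z_f(\amalg_{j \in \mathrm{sub}_i}K_j)$, whence $Z_f(L) = \sum_{i \in I} Z_f(\amalg_{j \in \mathrm{sub}_i}K_j)$. The main obstacle is the binary lemma: making airtight the claim that the proof of Proposition~\ref{bareZfta} is genuinely insensitive to the internal topology of $L_0$ — in particular that the zero-padding and the bookkeeping of all the $\nu^{m_i}$ across both families of components go through unchanged — together with a careful justification of the vanishing of the mixed $L_0$–$L_1$ chord integrals, where the "distant ball" geometry is doing the actual work. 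The outer induction on $|I|$ is then routine.
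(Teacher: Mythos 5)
The paper offers no proof of this Corollary at all --- it is stated as an immediate consequence of Proposition \ref{bareZfta} and the text moves straight on to its applications --- so the only thing to compare your argument against is the implicit intention of iterating that Proposition. Your induction on $|I|$ is exactly that iteration made precise, and the two points you flag are genuine gaps in the ``immediate.'' First, Proposition \ref{bareZfta} only splits off a single \emph{knot} $K_0$, whereas the inductive step must split off an entire sublink, so the binary identity $Z_f(L_0 \amalg L_1) = Z_f(L_0) + Z_f(L_1)$ for split sublinks really does need to be supplied; your check that the zero-padding and the componentwise $\nu$-bookkeeping in the proof of Proposition \ref{bareZfta} never use that $K_0$ has one component, combined with the distant-ball argument for the $\hat{Z}_f$-level additivity, is the right way to supply it. Second, your insistence on reading ``pairwise unlinked'' as a simultaneous split decomposition is not pedantry: the Borromean rings are pairwise split, yet $Z_f$ of the Borromean rings is not the sum of the $Z_f$'s of three unknots, so under the literal pairwise reading the statement would be false; it is only under your reading that the inductive step --- $\mathrm{sub}_{i_0}$ split from the union of the remaining blocks, which themselves remain a split family --- is even available. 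With those two points in place the induction closes as you describe, and the argument is correct; it is more careful than what the paper provides.
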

An immediate consequence of this corollary is that books for links such as the one above will have blocks that will always be empty, being an indication that some components are unlinked. We will use Proposition \ref{bareZfta} later to determine where crossings are on a link $L$. It is important to point out that if we are given $Z_f(L)$ for some yet unknown link $L$, we do not know a priori how many components this link $L$ has. Suppose the matrices for the books of $Z_f(L)$ have size $R=qN$ for some $q$ yet to be determined. To make the concept of books less rigid, we introduce an algebra $\mathcal{B}$ of books, $\mathcal{B}[R]=\oplus_{m \geq 0}\Big( \text{Mat}_{R \times R} \Big)^{\otimes m}$ being a subalgebra thereof, with completion $\overline{\mathcal{B}}[R]=\prod_{m \geq 0}\Big( \text{Mat}_{R \times R} \Big)^{\otimes m}$. Let $\overline{\mathcal{B}}$ be the completion of $\mathcal{B}$. One would like to have maps $\overline{\mathcal{B}}[R] \rightarrow \overline{\mathcal{B}}[R+S]$ for certain values of $S$. This is implemented by adding or deleting strips and the matrices $M_n^{\pm}$ do just that. Once we have such matrices, it is immediate what the value of $N$ is, from which we can find $q=R/N$. In particular in the above map we would have $S=q$ since we add one strip for each of the $q$ components, yet another way to determine $q$. The complete picture is given by a sequence of algebras of books along with embeddings that corresponds to $\overline{\mathcal{B}}$:
\beq
\overline{\mathcal{B}}[1] \rightarrow \overline{\mathcal{B}}[2] \rightarrow \cdots \rightarrow \overline{\mathcal{B}}[n] \rightarrow \cdots \nonumber
\eeq
An element such as $Z_f(L) \in \overline{\mathcal{B}}[R]$ will only map to $\overline{\mathcal{B}}[R+4mq+mN+4m^2]$ for $m \geq 0$. Indeed, adding a circle to $Z_f(L) \in \overline{\mathcal{A}}(\amalg^q S^1)$ will yield $Z_f(L)\amalg S^1 \in \overline{\mathcal{A}}(\amalg^{q+1}S^1)$, which does not contain any more information than what we already had with the original $Z_f(L) \in \overline{\mathcal{A}}(\amalg^q S^1)$. Further $Z_f(S^1)=S^1$ is trivial. Using Proposition \ref{bareZfta}, we can write:
\beq
Z_f(L)\amalg S^1=Z_f(L) \amalg Z_f(S^1)=Z_f(L \amalg S^1)
\eeq
If we now consider $Z_f(L\amalg S^1)$ in book notation, inserting an $S^1$ to the left of $L$ adds an additional 4 strips. Thus $Z_f(L\amalg S^1) \in \overline{\mathcal{B}}[R+4q+N+4]$. Adding another circle, we get $Z_f(L\amalg S^1 \amalg S^1) \in \overline{\mathcal{B}}[R+8q+2N+16]$. Continuing in this fashion determines what we call a thread for $Z_f(L)$:
\beq
\overline{\mathcal{B}}[R] \rightarrow \overline{\mathcal{B}}[R+4q+N+4] \rightarrow \cdots \rightarrow \overline{\mathcal{B}}[R+4mq+mN+4m^2] \rightarrow \cdots \nonumber
\eeq
a representative being given by $Z_f(L) \in \overline{\mathcal{B}}[R]$. Further adding a circle at each step does not modifiy the value of $Z_f(L)$. $L$ being fixed, the additional circles are not fixed however and each $Z_f(L \amalg ^m S^1)$ will be a functional of the position of the $m$ additional circles. An upshot of this formalism is that a thread immediately tells us how many components the link $L$ has, since upon adding trivial circles, matrices have an empty block for each circle, and non trivial blocks being pushed by $m(N+4m)$. Therefore we know that ultimately $Z_f(L) \in \overline{\mathcal{A}}(\amalg ^q S^1)$ after projection.\\

In a first time, if we want to recover $Z_f(L)$ in terms of chord diagrams, we essentially need to locate all the local extrema of the link $L$ as well as where its crossings are. From this knowledge, we can determine what chord diagram each book is corresponding to, and thus we have a map from $Z_f(L)$ in book notation to its projection in the completed algebra of chord diagrams. We first determine where the crossings are. We start by first recalling some particularity of the Kontsevich integral of crossings that will allow us to detect them in the expression for $Z_f(L)$.
\begin{Xpm} \label{Xpm}
For all $m \geq 0$, the coefficient of the degree $m$ part of the Kontsevich integral of a crossing is a sum of terms, one of which is $\pm 1/(m! 2^m)$.
\end{Xpm}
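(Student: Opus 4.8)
The plan is to compute the Kontsevich integral of a single crossing directly from the definition, using a convenient Morse representative. First I would fix the elementary two–strand tangle $X^{\pm}$ and choose coordinates so that both strands are monotone in $t$; the standard choice is $z_1(t)=\tfrac12 e^{\pm i\pi t}$, $z_2(t)=-\tfrac12 e^{\pm i\pi t}$ on $t\in[0,1]$, the sign in the exponent matching the sign of the crossing. The key structural observation is that because each strand is a graph over the $t$–axis, at every height exactly one point of each strand is available, so there are no self–chords, every applicable pairing $P$ of length $m$ is built from $m$ chords each joining the two strands, all such $P$ produce the \emph{same} chord diagram $D_P=\Omega^m$ (the $m$ parallel chords stretching between the two strands), and $\varepsilon(P)=0$ since both strands run upward. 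Hence the degree $m$ part of $Z(X^{\pm})$ collapses to a single term $c_m\,\Omega^m$, and the whole statement is about the scalar $c_m$.

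Given this, I would write $c_m=(2\pi i)^{-m}\int_{t_{min}<t_1<\cdots<t_m<t_{max}}\prod_i \dlog\bigl(z_1(t_i)-z_2(t_i)\bigr)$ and use that $\dlog(z_1(t)-z_2(t))$ is an exact $1$–form in $t$, together with the simplex identity $\int_{\Delta^m}\prod_i f'(t_i)\,dt_i=\tfrac1{m!}(\int f')^m$, to get $c_m=\frac{1}{(2\pi i)^m m!}\bigl(\log\tfrac{(z_1-z_2)(t_{max})}{(z_1-z_2)(t_{min})}\bigr)^m$. Writing the logarithm as $\rho+i\theta$ with $\rho$ real and $\theta$ the total turning angle of the vector $z_1-z_2$ along the crossing, and expanding by the binomial theorem, the $\rho^{0}$ summand of $c_m$ is $(i\theta)^m/\bigl((2\pi i)^m m!\bigr)$; since a crossing makes the two strands exchange endpoints, $z_1-z_2$ performs a half–turn, so $\theta=\pm\pi$ and that summand equals $(\pm i\pi)^m/\bigl((2\pi i)^m m!\bigr)=(\pm1)^m/(2^m m!)=\pm 1/(m!2^m)$, using $(i\pi)^m/(2\pi i)^m=2^{-m}$. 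This isolates the required term; for the symmetric model one moreover has $\rho=0$, so in fact $c_m=\pm1/(m!2^m)$ exactly, i.e. $Z(X^{\pm})=\exp(\pm\Omega/2)$, and the case $m=0$ is the trivial identity $\pm1/(0!2^0)=1$.

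The step I expect to be the real content, rather than the integral itself, is the bookkeeping of the first paragraph: checking carefully that for the chosen Morse representative the only applicable pairings are the described ones, that they all yield the single diagram $\Omega^m$, that $\varepsilon(P)$ vanishes, and, most importantly, that the net turning angle of $z_1-z_2$ along an elementary crossing is exactly $\pm\pi$ (and not an odd multiple of larger absolute value, which would correspond to a crossing carrying extra framing twists). Once these points are pinned down, the remaining work — simplex integration followed by a binomial expansion — is routine, and the conclusion holds for every $m\ge 0$.
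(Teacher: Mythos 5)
Your proposal is correct and follows essentially the same route as the paper: both reduce the degree $m$ coefficient to the simplex integral $\frac{1}{m!}\log^m$ of the ratio of endpoint separations and then isolate the $(i\pi)^m/(2\pi i)^m = 2^{-m}$ summand of the expanded logarithm (your $\rho+i\theta$ splitting is the paper's $\log\lambda + i\pi$). The only cosmetic differences are that you fix a symmetric parametrization and check the pairing bookkeeping explicitly, while the paper keeps a general radial factor $\lambda$ and carries the relative orientation as a $(\pm 1)^m$ from $\varepsilon(P)$ — neither affects the conclusion.
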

\begin{proof}
For a crossing of the form:
\beq
\setlength{\unitlength}{0.3cm}
\begin{picture}(12,8)
\put(0,-0.5){\vector(1,0){8}}
\put(8,-0.5){\vector(-1,0){8}}
\put(4,-2){$\Delta z$}
\put(5,5.5){\vector(1,0){5}}
\put(10,5.5){\vector(-1,0){5}}
\put(6,6){$\lambda e^{i \pi} \Delta z$}
\thicklines
\qbezier(0,0)(8,2)(10,5)
\qbezier(5.5,2.5)(5,2.5)(5,5)
\put(8,0){\line(-1,1){1.7}}
\end{picture} \nonumber
\eeq
\\
where $\lambda$ is some non zero real number and $\Delta z$ is the separation between the two strands at time $t=0$, then denoting such a crossing by $X^+$, we have:
\beq
Z(X^+)=\sum_{m \geq 0}Z_m(X^+)
\eeq
with:
\beq
Z_m(X^+)=\frac{1}{(2 \pi i)^m}\int_{t_1 < \cdots < t_m}(\pm 1)^m X^+_m \prod_{1 \leq i \leq m} \dlog \vartriangle \!\! z(t_i)
\eeq
where the plus sign corresponds to the two strands having the same orientation, the minus the case of opposite orientations, and $X^+ _m$ is the tangle chord diagram with skeleton $X^+$ with $m$ chords on it.\\
This equals:
\begin{align}
Z_m(X^+)&=\frac{(\pm 1)^m}{(2 \pi i)^m} X^+_m \int_{t_2 < \cdots < t_m}\dlog \vartriangle \!\!z(t_2)  \cdots \dlog \vartriangle \!\!z(t_m) \int_{t_1 <t_2} \dlog \vartriangle \!\!z(t_1) \nonumber \\
&=\frac{(\pm 1)^m}{(2 \pi i)^m} X^+_m \int_{t_2 < \cdots < t_m}\dlog \vartriangle \!\!z(t_2) \cdots \dlog \vartriangle \!\!z(t_m) \log(\frac{\vartriangle \!\!z(t_2)}{\vartriangle \!\!z}) \nonumber\\
&=\frac{(\pm 1)^m}{(2 \pi i)^m} X^+_m \int_{t_3 < \cdots < t_m}\dlog \vartriangle \!\!z(t_3) \cdots \dlog \vartriangle \!\!z(t_m) \int_{t_2 <t_3} \dlog \vartriangle \!\!z(t_2) \log(\frac{\vartriangle \!\!z(t_2)}{\vartriangle \!\!z}) \nonumber \\
&=\frac{(\pm 1)^m}{(2 \pi i)^m} X^+_m \int_{t_3 < \cdots < t_m}\dlog \vartriangle \!\!z(t_3) \cdots \dlog \vartriangle \!\!z(t_m) \int_{t_2 <t_3} \dlog \frac{\vartriangle \!\!z(t_2)}{\vartriangle \!\!z} \log(\frac{\vartriangle \!\!z(t_2)}{\vartriangle \!\!z}) \nonumber \\
&=\frac{(\pm 1)^m}{(2 \pi i)^m} X^+_m \int_{t_3 < \cdots < t_m}\dlog \vartriangle \!\!z(t_3) \cdots \dlog \vartriangle \!\!z(t_m) \frac{1}{2}\log^2(\frac{\vartriangle \!\!z(t_3)}{\vartriangle \!\!z}) \nonumber \\
&= \cdots = \frac{(\pm 1)^m}{(2 \pi i)^m} X^+_m \frac{1}{m!}\log ^m (\frac{\lambda e^{i \pi}\vartriangle \!\!z}{\vartriangle \!\! z})\nonumber \\
&=\frac{1}{m!}\frac{(\pm 1)^m}{(2 \pi i)^m} X^+_m \Big( log \lambda + i \pi \Big)^m \nonumber \\
&=\frac{1}{m!}\frac{(\pm 1)^m}{(2 \pi i)^m} X^+_m \Big( (i \pi)^m + \cdots \Big) \nonumber\\
&=\frac{1}{m!}\frac{(\pm 1)^m}{2^m}X^+ _m + \cdots \nonumber
\end{align}
For a crossing of the form:
\beq
\setlength{\unitlength}{0.3cm}
\begin{picture}(12,8)
\put(0,-0.5){\vector(1,0){8}}
\put(8,-0.5){\vector(-1,0){8}}
\put(4,-2){$\Delta z$}
\put(5,5.5){\vector(1,0){5}}
\put(10,5.5){\vector(-1,0){5}}
\put(6,6){$\lambda e^{-i \pi} \Delta z$}
\thicklines
\qbezier(8,0)(5,0)(5,5)
\put(0,0){\line(6,1){5}}
\qbezier(7,1)(10,2)(10,5)
\end{picture} \nonumber
\eeq
\\
that we denote $X^-$ but otherwise with the same notations as above, then we would find, for all $m \geq 0$:
\beq
Z_m(X^-)=\frac{1}{m!}\frac{(\mp 1)^m}{2^m}X^+ _m + \cdots
\eeq
\end{proof}
\begin{!book} \label{!book}
For any given link $L$, there is a unique book in the expression for $Z_f(L)$ characterized by being the thinnest book whose coefficient is a sum of terms, one of which is real and is proportional to $\pm 1/2^M$, $M$ being the number of pages of that book, no two subsequent pages of which are identical, and such that we cannot add a page to that book different from its neighboring pages at the place of insertion to yield a book with coefficient a sum of terms, one of which is real and is proportional to $1/2^{M+1}$. This book represents all groups of crossings once, ordered from the top down, and $M-1$ is the number of changes in the crossing types of $L$.
\end{!book}
\begin{proof}
Consider a link $L$, $P=(P_1,\cdots,P_m)$ a pairing of order $m$ giving rise to a tangle chord diagram $L_P$ of degree $m$, corresponding to the following integral in $Z_f(L)$:
\beq
\frac{1}{(2 \pi i)^m}
\int_{0 < t_1 < \cdots < t_m <1}(-1)^{\varepsilon(P)}\dlog \vartriangle \!\! z[P_1](t_1) \cdots \dlog \vartriangle \!\! z[P_m](t_m) \label{kof}
\eeq
As $T=(t_1,\cdots , t_m)$ covers $\Delta_m$, the corresponding $m$ chords of $L_P$ located at times $t_1,\cdots,t_m$ along $L$ will sweep the part of the link $L$ covered by the $m$ chords. Suppose $L=T_1 \times \cdots T_l$ is the concatenation of $l$ elementary tangles $T_1,\cdots, T_l$, where in this proof only we consider that a group of crossings whose strands have the same winding constitute a single elementary tangle. By multiplicativity of $Z_f[\text{small Q}]$ for $Q>0$, we can write $Z_f(L)=Z_f[\text{\small Q}](L)=Z_f[\text{\small Q}](T_1) \times \cdots \times Z_f[\text{\small Q}](T_l)$. Then we can write \eqref{kof} as the coefficient of the part of the sum:
\beq
\sum_{m_1 + \cdots +m_l=m}Z_{f,m_1}[\text{\small Q}](T_1) \times \cdots \times Z_{f,m_l}[\text{\small Q}](T_l) \label{timedZ}
\eeq
for which:
\beq
\times_{1 \leq i \leq l}(T_i)_{|P'_i|=m_i}=L_P \label{cond}
\eeq
One should point out that writing \eqref{kof} as \eqref{timedZ} does not by any means indicate that \eqref{kof} was computed by using the multiplicativity of $Z_f[\text{\small Q}]$, but since \eqref{kof} is an iterated integral, its computation amounts to doing just \eqref{timedZ} in practice. By the preceding Lemma, contributions from crossings, be they positive or negative, always have coefficients a sum of terms one of which is real, and is not a power of $1/\pi$, which is not true of local extrema and associators in odd degrees. In even degrees however, those elementary tangles can have contributions to \eqref{kof} that are real. Consider the collection of summands of \eqref{timedZ} subject to \eqref{cond} that have coefficients a sum of terms, at least one of which is real. By the previous two remarks, each such summand represents a domain of integration in \eqref{kof} where either local extrema and associators are being swept by an even number of time variables, or by an odd number provided we simultaneously have an odd number of purely imaginary contributions from crossings to make the overall coefficient real. Consider the subcollection consisting of summands whose coefficients are a sum of terms, one of which is proportional to $1/2^m$. For $m \geq 2$, we could still get contributions from local extrema and/or associators. From this subcollection, eliminate all those summands such that upon adding a page to them different from neighboring pages at the place of insertion, we get a coefficient, a summand of which is real and is proportional to $1/2^{m+1}$. This indicates that such a summand represented a region of integration in \eqref{kof} missing a group of crossings. If all the summands have been eliminated, repeat the above procedure for a pairing $Q$, $|Q|>|P|$, the additional pairing containing components that were not present in $P$, until we have at least one summand left. Such a pairing $Q$ we call proper. Consider the collection of all summands surviving the elimination process for a proper pairing $Q$. Among the summands left there exists a thinnest book of $M$ pages with coefficient a summand of which is real and is proportional to $1/2^M$. No two of its pages are identical, and we cannot add a page to this book different from neighboring pages at the place of insertion to yield another book with coefficient with a summand that is real and is proportional to $1/2^{M+1}$. This book corresponds to a region of integration in \eqref{kof} where each group of crossings is being swept by only one time variable, and no associator or local extremum is being integrated over. This book is unique by construction.
\end{proof}
The preceding Proposition gives us the existence of such a book in the expression for $Z_f(L)$ for some given link $L$. It does not tell us how to recover such a book. We are now given $Z_f(L)$ and we are asked to determine where the crossings of $L$ are in a first time, or equivalently we have to find this book of $M$ pages. This we do by using the thread of $Z_f(L)$.
\begin{detect} \label{detect}
Using the first embedding of $Z_f(L)$ in its thread, we can determine where all the crossings of $L$ are, and whether they are positive or negative. Further we can determine within each group of crossings of a same type how many half-twists there are, and whether the involved strands wind clockwise or counterclockwise.
\end{detect}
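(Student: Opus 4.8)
The plan is to deduce the statement from two results already established: Proposition \ref{!book}, which places inside the expansion of $Z_f(L)$ a distinguished book $B$ with $M$ pages that records, one page per group of crossings and ordered from the top down, each group of crossings of $L$ exactly once; and Lemma \ref{Xpm}, whose proof evaluates the Kontsevich integral of a twist region explicitly. The thread of $Z_f(L)$ is used as the instrument that turns the (procedural) search in Proposition \ref{!book} into an effective one. First I would pass to the first embedding $Z_f(L\amalg S^1)=Z_f(L)+Z_f(S^1)\in\overline{\mathcal{B}}[R+4q+N+4]$ of the thread, using Propositions \ref{bareZfta} and \ref{generalbareZfta}; the adjoined trivial circle contributes a single empty block and four new strips in a fixed position on every page. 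This does three things: it exhibits the value of $N$, and hence $q=R/N$ and the whole block structure, so that one knows on which blocks genuine crossings may sit; it lets us discard any candidate book supported on the adjoined circle, since such a book is unchanged as further circles are appended along the thread whereas a genuine crossing book is not; and it provides slack at every place of insertion for the ``add a page different from its neighbours'' test of Proposition \ref{!book}, so that the elimination procedure in the proof of that Proposition can always be carried out and terminates. Carrying that procedure out inside $\overline{\mathcal{B}}[R+4q+N+4]$ isolates $B$ together with $M$; since $M-1$ is the number of changes of crossing type, this already fixes the coarse crossing pattern of $L$.

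The positions are then read off $B$ directly. Its $a$-th page carries a single entry $1$ together with its symmetric partner, say in the $(i_a,j_a)$ block at position $(u_a,v_a)$; since the pages are ordered from the top down, page $a$ says that the $a$-th group of crossings of $L$ joins $K_{i_a}$ and $K_{j_a}$ and sits in vertical strip $u_a$ of $K_{i_a}$ and strip $v_a$ of $K_{j_a}$ (a self-crossing region when $i_a=j_a$), which disposes of ``where all the crossings are''. For the remaining data I would, for each $a$, take the one-page sub-book $B_a$ of $B$: by construction its single chord sweeps exactly the $a$-th twist region, so $B_a$ appears as a degree-$1$ tangle chord diagram in $Z_f(L)$, and by the computation in the proof of Lemma \ref{Xpm} — the evaluation of $\tfrac{1}{2\pi i}\int\dlog\Delta z$ over a region across which $\Delta z$ is multiplied by $\lambda e^{\pm ik_a\pi}$ — its coefficient equals $\pm k_a/2$ plus terms carrying a positive power of $\pi^{-1}$, where $k_a$ is the number of half-twists of the group and the leading sign is governed by the crossing type of the group together with the relative orientation of the two strands. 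The unique real summand of that coefficient which is a rational multiple of $1/2$ is exactly $\pm k_a/2$; since the orientations of $L$ are part of the data, its magnitude yields $k_a$, and its sign yields both the crossing type (positive or negative) and, equivalently via the sign of the rotation $e^{\pm ik_a\pi}$, the clockwise/counterclockwise winding of the two strands. Running over $a=1,\dots,M$ yields the full crossing data; consistency with Proposition \ref{!book} is automatic, since the product of the $M$ summands $\pm k_a/2$ returns the real summand $\pm(k_1\cdots k_M)/2^M$ of $B$ itself.

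I expect the main obstacle to be the first paragraph: making precise exactly how the empty block and the four extra strips of the adjoined circle furnish the slack required by the insertion test of Proposition \ref{!book}, and verifying that the ``genuine'' blocks of $L$ are recognizable from the stability of empty blocks along the thread, so that the search terminates at $B$ and at no other book. A secondary point that must be checked is that the vertical slicing of Section 3 can always be refined so that the strips $(u_a,v_a)$ isolate the $a$-th twist region, ensuring that no associator or local extremum contaminates the coefficient of $B_a$; everything after that is a routine unwinding of the single-crossing computation of Lemma \ref{Xpm}, one group at a time.
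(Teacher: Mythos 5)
Your second paragraph contains the decisive gap: you claim that the sign of the real summand $\pm k_a/2$ of the degree-one coefficient ``yields both the crossing type \ldots and, equivalently via the sign of the rotation $e^{\pm i k_a\pi}$, the clockwise/counterclockwise winding.'' It does not. As the computation behind Lemma \ref{Xpm} shows, the degree-one coefficient of a twist region is $\mu\,\eps_a n_a/2$ plus other terms, where $\eps_a=\pm1$ is the winding sense and $\mu=\pm1$ is the relative orientation of the two strands; only the product $\mu\eps_a$ is visible at degree one. The paper records exactly this obstruction in Remark 7.6 immediately after Lemma \ref{detect}: two strands winding counterclockwise with opposite orientations and two strands winding clockwise with the same orientations give identical contributions to $Z_1$. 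Since $L$ is unknown, the local relative orientation $\mu$ at a given twist region is not ``part of the data,'' so your argument recovers $\mu\eps_a n_a$ but not the winding $\eps_a$, which is part of the statement being proved. Separating $\mu$ from $\eps_a$ is precisely what the thread is for in the paper's proof: one writes $\varz(t)=\lambda(t)\,\varz\, e^{i\theta(t)}$ with $\theta$ piecewise linear across the successive groups of crossings, and computes the \emph{degree-two} iterated integral whose top chord runs from the adjoined circle to one of the two strands and whose bottom chord is the crossing chord; its purely imaginary part $\pm\mu\tfrac{1}{(2\pi)^2 i}\int\dlog\varz'(t)\cdot\theta(t)$ exposes each $\eps_k n_k$ individually, and repeating the computation with the probe chord attached to the other strand determines $\mu$ itself. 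Your proposal never performs this degree-two computation, and the degree-one data alone cannot supply the winding.

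A secondary divergence: you route the localization of crossings through the $M$-page book of Proposition \ref{!book}, whereas the paper proves Lemma \ref{detect} independently of that book (indeed Lemma \ref{detect} is what later makes that book effectively recoverable in Theorem \ref{Recovery}). Your mechanism for making the elimination procedure of Proposition \ref{!book} effective --- stability of empty blocks along the thread and ``slack'' at places of insertion --- is asserted rather than established, and the claim that a one-page sub-book $B_a$ has coefficient contaminated only by the $a$-th twist region is also unjustified, since the degree-one coefficient of a fixed page integrates the chord over every portion of $L$ whose feet lie in the strips $(u_a,v_a)$, not merely over the twist region.
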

\begin{proof}
Map $Z_f(L)$ in $\overline{\mathcal{B}}[qN]$ to its second element in its thread, that is $Z_f(L \amalg S^1)$, an element of $\overline{\mathcal{B}}[(q+1)(N+4)]$. For each chord stretching between the additional circle to the link $L$, we have a contribution $\dlog (z-z')$ to coefficients of the framed Kontsevich integral, where $z$ is a local coordinate on the circle, and $z'$ is a local coordinate on the link $L$. Since we consider equivalence classes of circles, $z$ is a variable. Now consider link components indexed by $i$ and $j$, $1 \leq i,j \leq q$ for which we have in degree one non zero contributions to $Z_f(L)$ coming from a chord stretching between a strand of the $i$-th component and a strand of the $j$-th component. Let $t_{min}$ (resp. $t_{max}$) be the time at which either of those two strands forms a local minimum (resp. a local maximum). Suppose those two strands are linked. Write $\varz(t)$ for the length of the chord between them as a function of time. We can write $\varz(t)=\lambda(t)\cdot \varz \cdot \exp(i \theta(t))$ where $\lambda$ is a real function with $\lambda(t_{min})=1$, $\varz=\varz(t_{min})$, and $\theta$ is the argument of $\varz(t)$. If we have some linking between the two strands, we have at least one group of crossings between them. Suppose there are $m$ such groups, and that the $k$-th group, $1 \leq k \leq m$ has $n_k$ half-twists and introduce a number $\eps_k=\pm 1$, a plus sign for counterclockwise rotation, a minus sign for a clockwise rotation. Let $\mu=\pm 1$ the relative orientation of the two strands, a plus sign indicating that both strands have the same orientation. Then we can define:
\beq
\theta(t)=\eps_1 n_1 \pi \frac{t-t_{min}}{t_1-t_{min}}
\eeq
if $t_{min}=t_0 \geq t \geq t_1$, and for $k \geq 2$:
\beq
\theta(t)=\sum_{1 \leq l < k}\eps_l n_l \pi + \eps_k n_k \pi \frac{(t-t_{k-1})}{(t_k - t_{k-1})}
\eeq
if $t_{k-1} \geq t \geq t_k$ for some times $t_1 ,\cdots , t_{m-1}, t_m=t_{max}$ in between each group of crossings. Now consider the degree two contribution to $Z_f(L \amalg S^1)$ coming from the tangle chord diagram having a chord stretching between the circle and one of the strands above, be it the $i$-th or the $j$-th, with length $\varz'(t)$ and relative orientation $\pm 1$, and right below the chord of length $\varz(t)$ stretching between the $i$-th and the $j$-th components. We are looking at:
\begin{align}
\pm \mu \frac{1}{(2 \pi i)^2}& \int_{t_{min}<t<t'<t_{max}} \dlog \varz'(t') \dlog \varz(t)\nonumber \\
&=\pm \mu\frac{1}{(2 \pi i)^2}\int_{t_{min}<t<t'<t_{max}} \dlog \varz'(t') \dlog (\lambda(t) \varz e^{i \theta(t)}) \\
&=\pm \mu\frac{1}{(2 \pi i)^2}\int_{t_{min}<t'<t_{max}} \dlog \varz'(t') \log \frac{\lambda(t') \varz e^{i \theta(t')}}{\varz}\\
&=\cdots  \pm \mu\frac{1}{(2 \pi)^2 i}\int_{t_{min}<t<t_{max}} \dlog \varz'(t) \cdot \theta(t)
\end{align}
where in the last line we have exhibited only the purely imaginary part of the iterated integral. If we denote by $I$ this summand, we can write:
\begin{align}
I&=\pm \mu\frac{1}{(2 \pi)^2 i}\sum_{1 \leq k \leq m} \int_{t_{k-1}}^{t_k} \dlog \varz'(t)\cdot \Big(\sum_{ 1 \leq l < k}\eps_l n_l \pi + \eps_k n_k \pi \frac{t-t_{k-1}}{t_k-t_{k-1}}\Big)\\
&=\pm \mu\frac{1}{(2 \pi)^2 i}\sum_{1 \leq k \leq m} \Big( \sum_{1 \leq l <k} \eps_l n_l \pi \Big)\log(\frac{\varz'(t_k)}{\varz'(t_{k-1})}) \nonumber \\
& \pm\mu\frac{1}{(2 \pi)^2 i}\sum_{ 1 \leq k \leq m} \eps_k n_k \pi \int_{t_{k-1}}^{t_k} \dlog \varz'(t) \cdot \frac{t-t_{k-1}}{t_k-t_{k-1}}\\
&=\pm \frac{1}{(2 \pi)^2 i}\sum_{1 \leq k \leq m} \Big[ \sum_{1 \leq l <k} \mu\eps_l n_l \pi \log(\frac{\varz'(t_k)}{\varz'(t_{k-1})})+ \mu\eps_k n_k \pi \log \varz'(t_k) \Big]\nonumber \\
&\mp\frac{1}{(2 \pi)^2 i}\sum_{1 \leq k \leq m} \frac{\mu\eps_k n_k \pi}{t_k -t_{k-1}}\int_{t_{k-1}}^{t_k} \log \varz'(t)dt
\end{align}
From this expression we have all the groups of crossings, their number of half-twists as well as windings up to the relative orientation of the $i$-th and $j$-th strands given by $\mu$. Keeping in mind that the overall $\pm$ sign in the front is the relative orientation of the circle and the portion of the link on which the top chord is ending, consider now the degree two contribution to $Z_f(L \amalg S^1)$ with the top chord stretching between the circle and the other component forming the group of crossings. The new $\varz'$ will be our old $\varz'(t) \pm \varz(t)$. Repeat the above computation. If the overall sign is $\mp \mu$, this is a sign that the two components indexed by $i$ and $j$ had opposite orientations, same orientations otherwise. We repeat this procedure for all pairs of components for which we have a non zero contribution to $Z_{f,1}(L)$. In the event that there is no linking between two strands on two components, there will simply be no imaginary part such as the one above.
\end{proof}

\begin{Rmkdetect}
By Lemma \ref{Xpm}:
\beq
Z(
\setlength{\unitlength}{0.3cm}
\begin{picture}(12,8)(-1,0)
\thicklines
\qbezier(0,0)(8,2)(10,5)
\put(8.8,4){\vector(2,1){1}}
\qbezier(5.5,2.5)(5,2.5)(5,5)
\put(5,5){\vector(0,-1){1}}
\put(8,0){\line(-1,1){1.7}}
\end{picture})=
Z(
\setlength{\unitlength}{0.3cm}
\begin{picture}(12,8)(-1,0)
\thicklines
\qbezier(8,0)(5,0)(5,5)
\put(5,4){\vector(0,1){1}}
\put(0,0){\line(6,1){5}}
\qbezier(7,1)(10,2)(10,5)
\put(10,4){\vector(0,1){1}}
\end{picture})
\eeq
\\
as well as:
\beq
Z(
\setlength{\unitlength}{0.3cm}
\begin{picture}(12,8)(-1,0)
\thicklines
\qbezier(0,0)(8,2)(10,5)
\put(8.8,4){\vector(2,1){1}}
\qbezier(5.5,2.5)(5,2.5)(5,5)
\put(5,4){\vector(0,1){1}}
\put(8,0){\line(-1,1){1.7}}
\end{picture})=
Z(
\setlength{\unitlength}{0.3cm}
\begin{picture}(12,8)(-1,0)
\thicklines
\qbezier(8,0)(5,0)(5,5)
\put(5,5){\vector(0,-1){1}}
\put(0,0){\line(6,1){5}}
\qbezier(7,1)(10,2)(10,5)
\put(10,4){\vector(0,1){1}}
\end{picture})
\eeq
\\
which means for example that a negative contribution to $Z_1$ of a crossing comes from a negative crossing, but we cannot determine whether it is two strands winding counterclockwise with opposite orientations, or two strands winding clockwise, with same orientations, unless we use threading, which immediately makes the distinction between one case and the other, as in Lemma \ref{detect} above. This is the first instance where threading is simplifying computations considerably.
\end{Rmkdetect}
We now seek to find local maxima, and then to position them relative to the crossings on $L$.
\begin{findlocmax} \label{findlocmax}
From $Z_f(L)$ we can determine the relative position of all local maxima of $L$.
\end{findlocmax}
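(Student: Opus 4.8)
The plan is to detect each local maximum of $L$ through the \emph{framing contribution} it leaves near itself in $Z_f(L)$, and then to read off the relative positions from the strip labels and the bottom-up ordering of pages in books. Recall (from \cite{RG}, \cite{LM1}) that the framed Kontsevich integral $Z_f(T_a)$ of a single local maximum $T_a$ is not the trivial cup: it equals a cup times a universal, non-trivial power series of chord diagrams supported on the two strands flanking the maximum. By multiplicativity of $Z_f$, if we present $L$ as a concatenation of elementary tangles with the local maxima isolated into cup tangles $T_a$, then $Z_f(L)$ factors as the $\times$-product of the $Z_f$ of these elementary tangles, and each maximal cup tangle $T_a$, say lying on the component $K_l$, contributes books all of whose pages are of the form $A_{l,l,n,n'}$, where $\{n,n'\}$ is the pair of consecutive dividing strips produced for that maximum by the vertical slicing construction, and whose coefficients are the degree-by-degree coefficients of the universal series $Z_f(T_a)$. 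Since the slicing was built precisely so that no two local extrema can share a strip, distinct extrema give disjoint pairs $\{n,n'\}$.

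First I would isolate these framing towers. Having already located every crossing of $L$ by Lemma~\ref{detect} --- whose contributions, by Lemma~\ref{Xpm}, carry coefficients that are sums of $\pm 1/(m!\,2^m)$-type terms rather than the $Z_f(T_a)$-coefficients --- I would scan $Z_f(L)$ for all diagonal blocks $ll$ and all pairs of consecutive strips $\{n,n'\}$ in such a block at which $Z_f(L)$ contains the full tower $\stck_{1 \le a \le m} A_{l,l,n,n'}$ with coefficients matching those of $Z_f(T_a)$ in every degree $m$. This produces a finite list consisting of exactly one strip-pair for each local extremum of $L$. To tell maxima from minima I would use that the two branches of a maximum emanate towards lower $t$ and those of a minimum towards higher $t$: this direction is recorded by whether chords joining strips $n,n'$ to the rest of the link first appear below or above the framing tower in the books of $Z_f(L)$, cross-checked against the heights of the crossings already fixed by Lemma~\ref{detect}. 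Finally, the left-to-right order of the maxima is the order of their strip-pairs, and the position of a maximum relative to a crossing (or to another maximum) is obtained horizontally from the strip indices and, when the strip-ranges overlap, vertically from the bottom-up order of the relevant pages in the books, using chords that join a maximum to a crossing or to another maximum.

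The main obstacle is the first step: showing that a tower of $A_{l,l,n,n'}$-pages with $Z_f(T_a)$-coefficients occurs at the flanking strips of a local maximum and nowhere else. One must control the higher-degree terms of $Z_f$ in which framing chords at the maximum are interleaved with chords reaching the rest of the link; here the point is that the two branches of a turnback carry opposite orientations, so such mixed contributions cancel in pairs --- the same cancellation that makes the unframed integral converge and that was used in the $\Delta.\pi.1$ analysis --- and therefore do not corrupt the pure-framing tower. One must also exclude the possibility that two unrelated strands happen to sit in consecutive strips with coincidentally $Z_f(T_a)$-shaped coefficients; this is ruled out by demanding the tower be \emph{unbounded}, since nested framing chords can be stacked to arbitrary height only at a local extremum, together with the max/min disambiguation above. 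Making the all-degrees cancellation argument and the unbounded-tower characterization precise is the technical heart of the proof.
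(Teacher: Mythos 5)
Your underlying idea --- that each local maximum announces itself through the self-chords which the framed normalization places at its tip, and that the bottom-up ordering of pages together with the strip indices then gives the relative positions --- is the same as the paper's. But the paper's proof is essentially a two-line observation and needs none of the machinery you build. By the convention imposed at the end of Section 3, every chord of a tangle chord diagram is slid up until a foot reaches a local maximum; in particular a chord both of whose feet lie on the same local maximum ends up at the extremity of that maximum, at the definite height of that maximum. Since no two chords are simultaneous, these chords occur at distinct times, and the ordering of pages in the books directly encodes the relative heights of the maxima; two maxima at the same height are handled because non-simultaneity forces two separate books, one exhibiting each ordering. That is the whole argument.

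Your additional steps are where the proposal falls short. First, the maximum/minimum disambiguation is unnecessary: under the move-chords-up convention only maxima accumulate chords at their extremities, and minima are located separately (Lemma \ref{findlocmin}) through the $Q$-dependence of coefficients. Second, and more seriously, the two claims you yourself flag as ``the technical heart'' --- that mixed contributions interleaving framing chords at a maximum with long chords cancel in all degrees, and that an unbounded tower of identical pages $A_{l,l,n,n'}$ with $Z_f(T_a)$-shaped coefficients can occur only at a local extremum --- are asserted rather than proven, and the second is genuinely delicate: nothing in the book formalism a priori prevents the coefficients attached to some other pair of consecutive strips from reproducing the $Z_f(T_a)$ series degree by degree, so ``ruled out by demanding the tower be unbounded'' is not yet an argument. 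As written the proposal is therefore incomplete; the repair is simply to discard the tower characterization and invoke the chord-placement convention directly, after which the relative positions are read off from page order and strip indices exactly as you describe in your second paragraph.
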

\begin{proof}
Since chords are moved up so that at least one of their feet reaches a local maximum, chords both of whose feet are on the same local maximum are moved up to the extremity of that local maximum. Further each chord is located at some definite time, or height. Thus local maxima at different heights will have chords on their extremities at different times corresponding to those heights, and therefore we have corresponding books whose ordering of pages indicates the position of each local maximum relative to the others. For two local maxima at the same height, by non-simultaneity of chords, we will have two books, each one displaying the position of a chord on each of the two local maxima relative to the others.
\end{proof}
\begin{maxX} \label{maxX}
From $Z_f(L)$, the knowledge of the ordered sequence of local maxima from the preceding Lemma as well as the knowledge of the ordered sequence of crossings from Lemma \ref{detect}, we can position all crossings relative to the local maxima along $L$.
\end{maxX}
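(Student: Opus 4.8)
The plan is to exploit the fact that $Z_f(L)$ is written in book notation, so that the ordering of pages in every book encodes the temporal ordering of chords along $L$. By Lemma \ref{findlocmax} we already possess the ordered sequence of local maxima of $L$, and by Lemma \ref{detect} we possess the ordered sequence of groups of crossings of $L$, together with, for each group, its type, its number of half-twists, and the winding sense of the two strands involved. What remains is to merge these two ordered lists into a single ordered list along each component of $L$, i.e. to decide, for a given crossing group and a given local maximum, which one sits higher. I would do this by producing, for each crossing group $G$ and each local maximum $M$, a book in the expression for $Z_f(L)$ that carries one page recording a chord sitting on (a strand of) $G$ and one page recording a chord sitting on (a strand of) $M$, and then reading off from the order of those two pages in the book which of $G$, $M$ is higher. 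Existence of such a two-page configuration with nonzero coefficient is guaranteed by the same kind of argument used in Proposition \ref{!book} and Lemma \ref{detect}: a chord with a foot on a strand participating in $G$ produces a $\dlog$ contribution that is genuinely present (the crossing is a nontrivial tangle so $Z_f$ of it is nontrivial), and a chord forced up to a local max $M$ is likewise present; since no two chords share a time, the two chords occur in a definite order, and that order is exactly the order of the corresponding pages in the book.

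Concretely, I would first fix a component $K_l$ and list its local maxima $M_1, \dots, M_{a}$ in the order given by Lemma \ref{findlocmax}, and its groups of crossings $G_1, \dots, G_b$ that touch $K_l$, in the order given by Lemma \ref{detect}. For each adjacent pair in the combined unordered data, I would locate in $Z_f(L)$ a book of chord degree two whose two pages are: a page whose nonzero entry lies in a strip occupied by a strand of the crossing group in question (this strip is known, since the vertical slicing is fixed and the crossing group occupies identifiable strips — those where the $\dlog$ with imaginary part detected in Lemma \ref{detect} lives), and a page whose nonzero entry lies in the strip containing the relevant local maximum (again a fixed, known strip). The relative vertical order of these two pages in the book is precisely the relative height of the crossing group and the local maximum along $K_l$. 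Iterating over all adjacent pairs produces a total order on $\{M_1,\dots,M_a\} \cup \{G_1,\dots,G_b\}$ along $K_l$, and doing this for every component $K_l$, $1 \le l \le q$, positions every crossing relative to every local maximum along all of $L$. Consistency across components where a crossing group is shared between $K_i$ and $K_j$ is automatic, since the single chord recording the crossing group sits at one time and hence appears at one page-position.

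The main obstacle is the degenerate case in which a crossing group and a local maximum occur at (or arbitrarily near) the same height, so that there is no single book with one page on each: by non-simultaneity of chords one cannot place two chords at the same time. I would handle this exactly as in Lemma \ref{findlocmax}'s treatment of two local maxima at the same height — take instead two distinct books, one exhibiting a chord on the crossing group's strand with a second chord slightly above or below it on the local maximum's strand, the other the symmetric configuration — and observe that the pair of books collectively pins down the coincidence of heights, which is all the relative positioning requires. A second, milder point requiring care is that a local maximum may itself lie "inside" the strip region of a crossing group after the vertical slicing; but the slicing was constructed (Section 3.2.1) so that each local extremum is isolated in its own strip bracketed by dividing strips, so the strips carrying a local maximum are disjoint from those carrying the strands of a crossing group, and the two page-entries never collide in the same strip. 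With these two caveats dispatched, the merge is purely combinatorial and the proof is complete.
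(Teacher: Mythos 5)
Your plan hinges on reading the height of a crossing group directly off the page-position of a chord ``sitting on'' that group, and this is where it breaks down. The book representation in this paper is built with the convention (end of Section 3.2.2, reiterated in the proof of Lemma \ref{findlocmax}) that every chord is slid upward along the skeleton until at least one foot reaches a local maximum. Consequently, in any book actually appearing in $Z_f(L)$, \emph{both} pages of your proposed degree-two book sit at heights of local maxima: the chord you want to attach to the crossing group $G$ does not rest at the height of $G$ but at the height of whatever local maximum its strands run up into. The vertical order of the two pages therefore only compares two local-maximum heights and cannot tell you whether $G$ lies above or below a given $M$. A second difficulty is your claim that ``the crossing group occupies identifiable strips'': the vertical slicing is generated by local extrema, not by crossings, and strands reorganize themselves horizontally between heights, so the same strip is occupied by different strands at different times. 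Without tracking that reorganization, a nonzero entry in a given strip does not certify that the chord foot lies on a strand of $G$ (nor on the strand feeding a particular $M$). This identification problem is exactly the obstacle the statement is really about, and your merge step presupposes it solved.

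The paper's proof attacks precisely this obstacle by introducing ``shifting.'' Going down the ordered list of local maxima, one uses chords between the $k$-th highest maximum and all higher ones to record, at the height of each maximum, which strip each higher maximum's descending strand currently occupies. With this table in hand, one then goes down the ordered list of crossings: a chord in a crossing group has strip indices that can be matched (via the shifting data, and inductively via the already-placed higher crossings when a chord shares no index with any maximum) to the strands of identified local maxima, which is what positions the crossing relative to them. If you want to keep the spirit of your argument, you would need to first reconstruct this strand-tracking data; the temporal page ordering alone does not substitute for it.
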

\begin{proof}
A chord has a pair of indices for each foot. One index for the component, the other for the strip. The component indices are clearly known from a book. The strip indices however are more difficult to work with. Thus when we talk about indices for a chord, we will mean the strip indices. Recall that chords are moved up until at least one of their feet reaches a local maximum, whenever possible. It is difficult however to associate indexes from chords emanating from groups of crossings to those of local maxima, since those latter can reorganize themselves. This leads us to first consider what we call ``shifting". Shifting means that prior to having a crossing, strands can reorganize themselves. At the level of elementary tangles, this corresponds to having a concatenation of associators and local extrema. We allow crossings on strands connected to a same local max. To determine the shifting, we go down the list of local maxima. Starting from the second highest local maximum, we determine all chords between this local max, and the highest local max. For one such chord, the strip index for the foot on a strand going up to the highest local max tells us in which strip that strand is located. In this manner we can tell the ``shifting" of the highest local max at the level of the second highest local max. Next we consider the third highest local max. Since we still assume that there is no known crossing other than simple crossings between strands connected to a same local max, by the same considerations we can determine the shifting of the highest and second highest local maxima at the level of the third highest local max. Proceeding in this fashion we can determine at each height at which a local max is situated prior to having crossings other than simple ones, the relative shifting of all other higher local maxima. Now going down the list of crossings, a chord in the first group of crossings has either one or two indices in common with local maxima indices, we can determine the location of that chord relative to higher local maxima using shifting. To illustrate this procedure, suppose such a chord has only one index in common with local maxima indexes. Using shifting we can determine what local maximum that strand whose index is unknown is connecting to. Suppose the chord index for the foot on the lowest local max is $n$, and the other foot index is $p$, and it turns out $p<n$. This means the strand whose corresponding higher local max it is connecting to is located to the left of the lower local max. Suppose the component involved is the $j$-th. Using shifting, we can determine which local max on the $j$-th component has moved down to be the closest to the lower local max in the $n$-th strip. This positions this particular crossing relative to this particular local max on the $j$-th component, and a lower local max ending in the $n$-th strip. Going next to the second highest crossing we now have three options. Either it has one or two indices in common with those of local maxima in which case it can be placed relative to local maxima as just discussed above. Using the knowledge about the positioning of the first crossing and possible shifting between the first and second crossing, we can position that second crossing relative to the other local maxima. A novelty starting at the second highest crossing is that a chord on such a crossing can possibly have no indices in common with any of the local maxima indices. Since chords are moved up however this particular chord is necessarily right below the first chord which in this case must be between one higher local max and one lower. The second chord must end on the strand connecting to the higher local max, and thus we already know what local max it is connecting to. Using shifting again we can determine what local max the strand on which the other foot is resting is connected to. Proceeding inductively by going down the list of crossings, we can position all crossings relative to the local maxima.
\end{proof}
\begin{findlocmin} \label{findlocmin}
From $Z_f(L)$, we can determine all local minima on $L$.
\end{findlocmin}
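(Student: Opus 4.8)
The plan is to recover the local minima not by reading them off individual books --- once chords are moved up to local maxima no chord sits at a minimum --- but by reconstructing the Morse diagram of $L$ from the top downward using everything already established, the minima then being the critical points that are forced to appear when the diagram is closed off. By Lemma \ref{findlocmax} we know the ordered set of local maxima of $L$; by Lemma \ref{detect} and Proposition \ref{maxX} we know every crossing of $L$, its type, its winding, and its position relative to the local maxima. Moreover the component index of each foot of each chord is part of the datum of a page (it is recorded by the block of the matrix in which the entry sits), and since $Z_f(L)$ is nontrivial on every component --- even a plain unknotted circle contributes self-chords, coming from the resolution built into the normalization of $Z_f$ at its extrema --- every strand of the diagram carries a well-defined component label. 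Thus $Z_f(L)$ determines the whole diagram of $L$ read from the top down: all caps, all crossings, and, between consecutive events, the monotone strands together with their strip indices. The only critical points still to be located are the local minima.

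First I would read the diagram downward, maintaining at each horizontal level the list of active strands meeting that level, each tagged by its strip index (hence by its left-to-right position) and by its component label. Going down, a local maximum creates two new adjacent active strands, a crossing transposes two adjacent active strands, and these operations are entirely determined by the data above. Since $L$ is Morse with only maxima and minima for critical points, every active strand must eventually be annihilated, and the only way to annihilate strands going down is a local minimum, which joins two adjacent active strands of the same component. Hence the set of local minima is exactly the collection of such closures, and I would determine it by going down the known list of events: just below the highest maximum sit two adjacent same-component active strands, and if neither of them is an endpoint of a further crossing or maximum they must be joined by a local minimum placed below all remaining events on those two strands; otherwise one descends past the intervening event and repeats. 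The strip indices of Proposition \ref{maxX} track which active strands are adjacent at each level, the component labels single out which adjacent pair may close, and the requirement that each $K_l$ close up into a single circle fixes the matching. Two minima at the same height need not be separated by height; they are distinguished combinatorially by the pairs of strands they join, and a finer comparison, if desired, follows from the non-simultaneity-of-chords argument of Lemma \ref{findlocmax} applied to the chords lying just above each of them.

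The main obstacle is this forcing step: one must verify that the upper part of the diagram together with the component labels admits only one completion to a Morse link diagram, i.e.\ that the pattern of cups --- which adjacent pair of active strands is joined, interleaved how with the crossings lying below --- is uniquely determined and cannot be realized in two genuinely different ways; this is where the rigidity of the action of crossings on the left-to-right order of active strands, together with the component labels, does the work, and I would carry it out by induction on the number of events still to be processed, showing that at each stage exactly one cup placement is admissible. It is worth stressing that only this combinatorial closure datum is needed: $Z_f$ is an isotopy invariant, so the exact heights of minima are irrelevant, and it is precisely the closure datum that is required to convert the tangle chord diagrams appearing in $Z_f(L)$ into honest chord diagrams on $\amalg^q S^1$ and thereby obtain the projection of $Z_f(L)$ in $\overline{\mathcal{A}}(\amalg S^1)$.
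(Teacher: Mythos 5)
Your route is genuinely different from the paper's. The paper detects minima analytically: by the definition of $Z_f=Z_f[\text{\small Q}]$, a degree-one chord stretching from a local maximum to another strand acquires a $Q$-dependent coefficient precisely when it is trapped above a local minimum at the bottom of the two strands, and that minimum is then located using the known crossings and shifting. You instead try to force the minima purely combinatorially from the upper half of the Morse diagram. The difficulty is that the step you yourself flag as the main obstacle --- that ``exactly one cup placement is admissible'' at each stage --- is asserted rather than proved, and it fails for the data you allow yourself. Consider a single component whose maxima and crossings leave four active strands at the bottom in left-to-right order $1,2,3,4$, with the upper arcs connecting $1\leftrightarrow 3$ and $2\leftrightarrow 4$. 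Both non-crossing closures, $(12)(34)$ and $(14)(23)$, are planar, respect the component labels, and close the component into a single circle (the cycles $1\to2\to4\to3\to1$ and $1\to4\to2\to3\to1$ respectively), yet they yield different links in general --- this is exactly the ambiguity between the two plat-type closures of a $4$-strand tangle. So adjacency, component labels, and the single-circle requirement do not pin down the matching, and your induction has no basis for choosing between the two admissible cups at the first ambiguous stage.

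Breaking this tie requires information that is not contained in the combinatorial closure datum, which is precisely what the paper's proof supplies: the $Q$-dependence of low-degree coefficients distinguishes a chord that can slide all the way down and escape from one that is stuck against a minimum, and the accompanying logarithms of strand separations, together with the crossings and shifting already determined, tie each minimum to a definite location. If you wish to keep the top-down reconstruction, you must either prove that the ambiguity above cannot occur for the links under consideration (it can, already for two-bridge knots), or re-import the analytic detection of trapped chords, at which point your combinatorial bookkeeping supplements rather than replaces the argument of Lemma \ref{findlocmin}.
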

\begin{proof}
By the definition of $Z_f=Z_f[\text{\small Q}]$ for $Q>0$, for local extrema a coefficient depending on $Q$ for a tangle chord diagram of order one for which the chord is stuck between some local maximum and an ascending strand is an immediate sign that at the very bottom we have a local minimum. Using the knowledge of all crossings down the two strands involved in such a tangle chord diagram and possible shifting, we can determine where the local minimum is located. This we do for any chord between a local max and another strand. For a self chord if we do not have an $Q$ dependency, then we have a trivial knot and we can position its local minimum.
\end{proof}
\begin{Recovery} \label{Recovery}
From $Z_f(L) \in \overline{\mathcal{B}}[qN]$ we can recover the full link, and consequently we can determine $Z_f(L)$ written in terms of chord diagrams in $\overline{\mathcal{A}}(\amalg^q S^1)$.
\end{Recovery}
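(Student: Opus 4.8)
The plan is to assemble, from the combinatorial data extracted in the preceding lemmas, a Morse diagram for $L$, and then to translate each book back into a chord diagram by reading it against the vertical slicing of that diagram. First I would settle the ambient bookkeeping: threading $Z_f(L)$ once is legitimate because Lemma \ref{bareZfta} gives $Z_f(L\amalg S^1)=Z_f(L)+Z_f(S^1)$, and from the first embedding in the thread one reads off $N$, hence $q=R/N$; so we know both the number of components of $L$ and the number of vertical strips before doing anything else.

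The heart of the proof is the reconstruction of the diagram, carried out from the top down. Lemma \ref{detect}, applied to the first embedding $Z_f(L\amalg S^1)$ in the thread, produces the ordered list of all groups of crossings, each tagged with its sign, its number of half-twists, and the winding sense of the two strands involved (the coefficient $\pm 1/(m!2^m)$ of Lemma \ref{Xpm} is what isolates crossings, and threading is what resolves the clockwise/counterclockwise and relative-orientation ambiguities noted in Remark~\ref{Rmkdetect}). Lemma \ref{findlocmax} gives the ordered list of local maxima, the bottom-up page ordering encoding heights and non-simultaneity of chords separating maxima at equal heights. Proposition \ref{maxX} then interleaves the two lists, using the ``shifting'' — the associator part of each elementary tangle — recovered from the strip indices of chords joining a crossing to a higher maximum. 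Finally Lemma \ref{findlocmin} locates the local minima the same way, via the $Q$-dependence of a degree-one coefficient for a chord stuck between a maximum and an ascending strand. Putting all of this together we obtain, for $L$ presented as $T_1\times\cdots\times T_\ell$, the complete sequence of elementary tangles — maxima, minima, crossings with their twist and winding data, and the interconnecting permutations of strands — which is precisely a Morse diagram; and since $\widetilde{Z}_f$ and $Z_f$ are isotopy invariants, recovering $L$ up to isotopy is all that is required.

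Once the diagram, and hence its slicing into $N$ strips, is in hand, the dictionary is immediate: a page $A_{i_a,j_a,u_a,v_a}$ together with the now-known placement of the strips on $\amalg^q S^1$ specifies exactly the pair of points on the skeleton that its chord joins, and the bottom-up ordering of the pages of $A_{IJUV}$ fixes the order of the chords, so each book determines a unique tangle chord diagram, that is an element $D_P\in\mathcal{A}(\amalg^q S^1)$. Replacing every book in $Z_f(L)=\sum c_{IJUV}A_{IJUV}$ by its corresponding chord diagram, keeping the coefficients $c_{IJUV}$ (and, if one wants the original Kontsevich integral, applying the projection $p$ that kills isolated chords), yields $Z_f(L)$ as an element of $\overline{\mathcal{A}}(\amalg^q S^1)$.

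I expect the main obstacle to be the second step, specifically the shifting bookkeeping inside Proposition \ref{maxX}: one must be sure the strand permutations between consecutive elementary tangles are pinned down unambiguously by the strip indices of chords, including the delicate case where a chord on a low crossing shares no strip index with any maximum and has to be threaded inductively through the chords lying above it. Everything downstream — gluing the elementary tangles into a single Morse diagram and the book-to-chord-diagram dictionary — is routine, but verifying that the recovered data is consistent, i.e. that it genuinely comes from one link and leaves no residual ambiguity in the diagram, is where the argument must be made with care.
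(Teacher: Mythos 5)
Your proposal follows essentially the same route as the paper's proof: it invokes Lemma \ref{detect}, Lemma \ref{findlocmax}, Proposition \ref{maxX} and Lemma \ref{findlocmin} in the same order to reconstruct the Morse diagram of $L$, and then translates each book into a tangle chord diagram on the recovered link to land in $\overline{\mathcal{A}}(\amalg^q S^1)$. The preliminary threading step to fix $q$ and $N$ and the remark on the projection $p$ are consistent with the surrounding discussion in the paper, so the argument is correct as written.
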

\begin{proof}
From Lemma \ref{detect} we determine the unique book of $M$ pages that at some definite time during the integration process over the $M$ simplex will represent $M$ chords, each one moving over one group of crossings on $L$. By the same Lemma we know the winding of strands within each group of crossings, and the corresponding number of half twists. Local maxima are located following Lemma \ref{findlocmax}, and then Proposition \ref{maxX} can place all groups of crossings relative to local maxima. The last thing to be determined is the position of local minima and this we have from Lemma \ref{findlocmin}. Thus given $Z_f(L)$, we know $L$.
In other terms, we have a map:
\beq
\setlength{\unitlength}{0.3cm}
\begin{picture}(15,3)
\thicklines
\put(0,0){$Z_f(L) \in \overline{\mathcal{B}}$}
\put(7,0.5){\vector(1,0){5}}
\put(9,1.5){$\mathfrak{z}^{-1}$}
\put(13,0){$L \in \mathbb{Z}Links$}
\end{picture}
\eeq
which induces a map:
\beq
\setlength{\unitlength}{0.3cm}
\begin{picture}(15,3)
\thicklines
\put(0,0){$Z_f(L) \in \overline{\mathcal{B}}$}
\put(7,0.5){\vector(1,0){5}}
\put(8,1.5){$\text{id} \otimes \mathfrak{z}^{-1}$}
\put(13,0){$(Z_f(L),L) \in \overline{\mathcal{B}} \otimes \mathbb{Z}Links$}
\end{picture} \nonumber
\eeq
Once we have $Z_f(L)$ in book notation, whose coefficients are those of the original framed Kontsevich integral, and we have $L$ as well, then we can place chords from books on $L$ to get corresponding tangle chord diagrams. Along with the coefficients this allows us to rewrite $Z_f(L)$ as an element of $\overline{\mathcal{A}}(L)$, from which we can recover $Z_f(L) \in \overline{\mathcal{A}}(\amalg S^1)$. In other terms we have the horizontal and vertical sides of the following triangle, and by closing it by composition, we obtain the desired map:
\beq
\setlength{\unitlength}{0.3cm}
\begin{picture}(15,10)
\thicklines
\put(-7,6.5){$Z_f(L) \in \overline{\mathcal{B}}$}
\put(1,7){\vector(1,0){5}}
\put(2,8){$\text{id} \otimes \mathfrak{z}^{-1}$}
\put(7,6.5){$(Z_f(L),L) \in \overline{\mathcal{B}} \otimes \mathbb{Z}Links$}
\put(10,6){\vector(0,-1){3}}
\put(7,1){$Z_f(L) \in \overline{\mathcal{A}}(L)$}
\put(10,0.5){\vector(0,-1){3}}
\put(7,-4.5){$Z_f(L) \in \overline{\mathcal{A}}(\amalg S^1)$}
\put(0,6){\vector(2,-3){6}}
\end{picture}
\eeq
\\
\end{proof}
\begin{Rmkidz}
The map from $\overline{\mathcal{B}}$ to $\overline{\mathcal{A}}(\amalg S^1)$ is not one-to-one. Indeed, taking the U shaped trivial circle that we called $U$, we have $\nu^{-1}=Z_f(U)$. We have also showed that flipping this knot upside down, we still get $Z_f$ of that flipped knot to be $\nu$. Those two knots however have very different books. Also, one may argue that $Z_f$ is not an isotopy invariant but $\widetilde{Z}_f$ is. Is there a map from that object to $Z_f$, and thus to the space of chord diagrams? The answer is almost certainly no, for the simple reason that $\widetilde{Z}_f(L)$ is obtained from $Z_f(L)$ by applying powers of $\nu$ to its components, thereby changing the book representation from one expression to the other. Further, having $\widetilde{Z}_f$, in order to recover $Z_f(L)$ one would need to know exactly what the coefficients of $\nu$ are, and work by hands to extract them from $\widetilde{Z}_f$ to recover the coefficients of $Z_f(L)$, and shrink the matrices by deleting all the hooks at local maxima.
\end{Rmkidz}
We would like now to illustrate the importance of using threading. Suppose such a notion is not introduced and we just consider $Z_f(L) \in \mathcal{B}[R]$. The first consequence of not using threading is that we cannot determine how many components $L$ has, and thus we cannot tell how many circles are needed to express $Z_f(L)$ in terms of chord diagrams. Suppose we know the number of link components for argument's sake. The next difficulty is to determine the book of $M$ pages given by Proposition \ref{!book}. Lemma \ref{detect} proves difficult to prove. Conjecturally, we can find the number of half twists within each group of crossings, and whether each such group is positive or negative. The exact winding remains elusive. If we conjecture that we can determine the number of half twists and the sign of each crossing, one could determine the expression of $Z_f(L)$ in terms of chord diagrams. To emphasize the power of threading however, we ask whether it is possible to determine the winding within each group of crossings from $Z_f(L)$ only. Recall that such information was obtained rather easily from Lemma \ref{detect}. We now set to determine how to obtain such windings from $Z_f(L)$ only.\\

\begin{detectXpm} \label{detectXpm}
Starting from the book of $M$ pages from Proposition \ref{!book}, we can determine the winding of the strands in each group of crossings.
\end{detectXpm}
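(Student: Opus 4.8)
The plan is to reduce the determination of each winding to the determination of a single relative orientation, which is in turn forced by the sign data already carried by $Z_f(L)$.

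First I would extract, for each of the $M$ groups of crossings, the quantitative data that Lemma~\ref{Xpm} makes available without threading. The positional results already in hand --- Lemma~\ref{findlocmax} for the ordered list of local maxima, Proposition~\ref{maxX} for the placement of the groups of crossings relative to those maxima, and Lemma~\ref{findlocmin} for the local minima --- locate every group of crossings of $L$ along the skeleton, so inside $Z_f(L)$ we may isolate, for the $a$-th group, the one-chord book whose single page lies on the two strands of that group at its height. Exactly as in the proof of Lemma~\ref{Xpm}, the real part of the coefficient of this book equals $\mu_a\eps_a n_a/2$, where $n_a$ is the number of half-twists in the group, $\eps_a=\pm1$ is its winding, and $\mu_a=\pm1$ is the relative orientation of the two strands there (the plus sign for co-oriented strands, in the convention of Lemma~\ref{detect}). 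Its magnitude gives $n_a$ and its sign gives the product $\mu_a\eps_a$, so it remains only to pin down $\mu_a$.

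To recover $\mu_a$ I would reconstruct the orientations of all the components of $L$ from the sign factors $(-1)^{\varepsilon(P)}$ that the coefficients of $Z_f(L)$ carry. A degree-one chord straddling two arcs of the skeleton contributes a factor $-1$ for each foot resting on a downward-oriented arc, so comparing the observed sign of such a coefficient with the sign predicted under the assumption that both arcs point upward reveals the parity of the number of downward feet, i.e.\ whether the two arcs are co- or oppositely oriented with respect to the time direction. Running this comparison over the family of chords already used in Proposition~\ref{maxX} and Lemma~\ref{findlocmin} --- those hanging from each local maximum down to each strand beneath it, together with at least one chord joining a local maximum to a local minimum of the same component --- yields a system of parity constraints whose solutions form a consistent up/down labelling of every arc once the orientation of one reference arc is fixed; that reference is fixed because at a local maximum precisely one of the two incident arcs is the one along which the component enters, and the chosen maximum-to-minimum chord records which. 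With every arc oriented, the relative orientation $\mu_a$ of the two strands of the $a$-th group is read off the reconstructed oriented diagram, whence $\eps_a=\mu_a\cdot(\mu_a\eps_a)$ and the winding of every group is determined.

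The main obstacle is combinatorial rather than analytic: it is the verification that the parity constraints coming from the $(-1)^{\varepsilon(P)}$ signs admit a unique solution relative to a single reference, since these signs detect only parities and a cancellation such as ``two downward feet'' versus ``no downward feet'' must be excluded by choosing a sufficiently rich spanning family of chords along each component. That no orientation information is actually lost is guaranteed by Theorem~\ref{Zft} and its page-level counterpart in the book notation, which show that reversing the orientation of a component genuinely twists $Z_f(L)$ by the associated signs; converting that qualitative fact into the explicit chord-by-chord propagation above is the bookkeeping on which the proof rests.
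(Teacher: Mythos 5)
Your reduction of the problem to ``find the relative orientation $\mu_a$'' matches the paper's strategy, but the step you use to recover $\mu_a$ contains a genuine gap. You propose to read off the parity factor $(-1)^{\varepsilon(P)}$ of a degree-one chord by ``comparing the observed sign of such a coefficient with the sign predicted under the assumption that both arcs point upward.'' The difficulty is that no such prediction is available: the coefficient is not $(-1)^{\varepsilon(P)}$ times a quantity of known sign, but $(-1)^{\varepsilon(P)}\cdot\frac{1}{2\pi i}\int \dlog\!\vartriangle\!\!z$, whose real part is governed by the total change of argument of $\vartriangle\!\!z$ (i.e.\ by the very windings you are trying to determine) and whose imaginary part is $\mp\frac{1}{2\pi}\log(b/c)$ for endpoint separations $b,c$ that are not known, so that the sign of $\log(b/c)$ is itself undetermined. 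The orientation parity is therefore inextricably multiplied by an analytic sign of unknown value, and your ``system of parity constraints'' cannot be set up. This is exactly the circularity the paper flags in the remark following Lemma~\ref{detect}: a negative degree-one contribution cannot distinguish opposite orientations with counterclockwise winding from equal orientations with clockwise winding. Invoking Theorem~\ref{Zft} does not repair this; that theorem says how $Z_f$ transforms under orientation reversal, not that the transformation can be inverted from the coefficients, and indeed any diagram with an even number of feet on the reversed component is blind to the reversal.

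The paper's proof escapes the circularity by an analytic, not combinatorial, device that your proposal is missing. It appends an $(M+1)$-st page to the $M$-page book of Proposition~\ref{!book}, chosen among the types $(K_{i,\mathrm{out}},K_{j,\mathrm{down}})$, $(K_{i,\mathrm{down}},K_{j,\mathrm{out}})$, $(K_{i,\mathrm{down}},K_{j,\mathrm{down}})$ just above the highest group of crossings between $K_i$ and $K_j$; tracks that chord through successive turnarounds, collecting contributions $\alpha_k=\pm\log(b_k/c_k)$ up to the unknown global sign $\pm=\mu$; isolates these contributions from all competing real and imaginary summands by the page-duplication argument of the inner lemma (which repackages the crossing contributions into the factor $\prod_i e^{\eps_i n_i\Omega/2}$); and finally observes that because only shifting, and no further crossings, occurs above the chosen group, one knows \emph{a priori} whether the terminal separation $c_n$ exceeds $\Lambda$, so that exactly one choice of the sign in $c_n=\Lambda e^{\pm\sum\alpha_k}$ is consistent. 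That externally known inequality is the one piece of unconditional sign information that breaks the degeneracy, and it has no counterpart in your argument. Without it, or some substitute of equal strength, the passage from the product $\mu_a\eps_a$ to $\eps_a$ does not go through.
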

\begin{proof}
We conjecture that it is possible to determine whether each group of crossings is positive or negative. To determine the winding on each group of crossings, it suffices to know the relative orientations of linked components: suppose two components $K_i$ and $K_j$ are linked, $1 \leq i,j \leq q$. Consider the highest group of crossings between the $i$-th and $j$-th components in the list of crossings. From this group, two strands are emanating on top, one for the $i$-th component, the other for the $j$-th component. Name those strands $K_{i,out}$ and $K_{j,out}$ locally. Let $t_{\Lambda}$ be the time at which a chord emanating from the group of crossings is located. Since both $K_{i,out}$ and $K_{j,out}$ are coming out of the group of crossings, both strands reach a local max at some point, and then are bound to go down below the group of crossings to close a loop. In particular both strands will cross the line $t=t_{\Lambda}$. Call $K_{i,down}$ and $K_{j,down}$ the portions of $K_i$ and $K_j$ respectively when each is going below $t=t_{\Lambda}$ for the first time as we trace them with our finger starting from $K_{i,out}$ and $K_{j,out}$ respectively. Since we have selected the highest group of crossings among the crossings between the $i$-th and $j$-th components, above that group there are no further crossings, but there may be some shifting. In particular if we follow a chord emanating from the group of chords, it will reach a first local max, at which point if we keep the chord moving along $K_i \amalg K_j$ it will go down to the first local min it finds. If we keep this chord moving along, it goes up to the next highest local max. At some point the chord will cross the time line $t=t_{\Lambda}$. We now work with the book of $M$ pages from Proposition \ref{!book} which we conjecture can be determined from $Z_f(L)$ alone. Its coefficient has a summand proportional to $1/2^M$, corresponding to each chord being integrated over each group of crossings. Thus if we add one page to this book, the hope is that the coefficient of this book of $M+1$ pages has a contribution $Z_{f,1}$ from the part of $K_i \amalg K_j$ strictly above $t_{\Lambda}$ that we can determine. The page we add however cannot correspond to a chord emanating from the highest group of crossings, lest the coefficient be intractable. We choose the chord as follows. We limit our number of choices for chords to three since this will be sufficient to determine the relative orientation of both components. Right above $t_{\Lambda}$, we select chords of type $(K_{i,out},K_{j,down})$, $(K_{i,down},K_{j,out})$ and $(K_{i,down},K_{j,down})$. Fix one type in what follows. Let $\Lambda$ be the separation between the two strands at the time $t_{\Lambda}$. The case $(K_{i,out},K_{j,out})$ is ruled out since it is intractable. We denote by $\pm$ the relative orientation between the portions of $K_i$ and $K_j$ of our choice, the plus sign indicating that both strands are oriented up or down, the minus sign indicating that they have mixed orientations. We have one book of $M+1$ pages corresponding to adding an additional page on top of the $M$ pages book corresponding to our initial choice for a chord, going up to the first local max it finds, at which point the chord is stuck between one local max on one component and a strand still going up on the other, and the length of the chord is denoted by $b_1$. The local max we call a turnaround. Then we consider another book of $M+1$ pages, with the additional page on top corresponding to the chord going down the other side of the turnaround until it reaches the first local min it finds, at which point the length of the chord is some number $c_1$. This point is another turnaround. The chord then moves up on the other ascending side of the turnaround until it hits the next local max. This will correspond to a further book of $M+1$ pages, the additional page representing this ascending chord. Let $b_2$ be the length of that chord when it reaches a local max. We proceed in this fashion, considering as many books of $M+1$ pages constructed as above as there are turnarounds, until one of the chords get stuck at the time $t_{\Lambda}$ since it is blocked from going lower by the already existing chord in the group of crossings. Now the question arises as to whether we can determine the contribution of those additional chords in the expression for $Z_f(L)$. We can, and we prove it in the following lemma since the proof is rather lengthy. We first present what the problem is when it comes to determining those coefficients. We have first selected a type of chord other than the problematic choice $(K_{i,out},K_{j,out})$, so the summand of the coefficient of the $M$ pages book proportional to $1/2^M$ will become proportional to $\lambda \cdot 1/2^M$ for some $\lambda$, a summand of the coefficient for the book of $M+1$ pages whose $M+1$-st page is corresponding to one of the chords just discussed. $\lambda$ is of the form $\pm (1/2 \pi i)\log b/c$, thus making the overall summand imaginary. However we cannot pinpoint this particular term among the other summands of $Z_{f,M+1}(L)$ that are purely imaginary. Assuming that we know whether each group of crossings is positive or negative, and what is its number of half twists, for the $i$-th group of crossings from the top (or bottom), $1 \leq i \leq M$, we know its contribution $\eps_i n_i \Omega /2$ to the summand of the $M$ pages book proportional to $1/2^M$, $\eps_i=1$ for a positive crossing, minus one for a negative crossing, $n_i$ the number of half twists. It follows that this particular summand has for coefficient:
\beq
\Big(\prod_{1 \leq i \leq M}\eps_i\Big)\cdot \Big(\prod_{1 \leq i \leq M}n_i \Big)\frac{\Omega^M}{2^M}:=\eps n \frac{\Omega^M}{2^M}
\eeq
with $\eps=\pm 1$.
\begin{detectkof}
We can determine the contribution $\lambda \eps n \Omega^{M+1}/2^M$ of those additional $M+1$-st pages to the coefficient of the book of $M+1$ pages.
\end{detectkof}
\begin{proof}
Among the summands of the coefficient of the $M+1$ pages book whose $M+1$-st page is of one of the three types selected, or a chord obtained by moving one such chord along $K_i$ and $K_j$ simultaneously, collect all those summands that are imaginary, one of them being $\lambda \eps n \Omega^{M+1} /2^M$, the additional power of $\Omega$ corresponding to the addition of the $M+1$-st page. Since we add a page contributing some imaginary power, it means that it is added to a region of integration in \eqref{kof} that yielded a real coefficient. We would like to isolate the term for which the contribution is exactly $\lambda \eps n \Omega^M /2^M$. However from the proof of Lemma \ref{Xpm}, the contribution in degree one of the $i$-th group of crossings is:
\beq
\pm\frac{\Omega}{2 \pi i}\log \lambda_i + \eps_i n_i \frac{\Omega}{2}
\eeq
It follows from this observation that for some values of $\lambda_i$, $1 \leq i \leq M$ and an even number of such $\log \lambda_i /2 \pi i$ being multiplied, we could get an overall power of $\eps n \Omega^M/2^M$. The same is true of the product of an even number of contributions from associators and/or local extrema. Thus in our computations it is not enough to consider only the product of the real parts of the framed Kontsevich integrals of crossings in degree one, but we also have to consider terms with a coefficient $\mu \Omega^M$ where $\mu$ gets a contribution from an even number of purely imaginary degree one contributions from crossings, associators and/or local extrema. To distinguish the desired term $\eps n \Omega^M/2^M$ from all those, we duplicate each of the $i$ pages of the $M$ pages book, $1 \leq i \leq M$. Starting from the summand whose coefficient is $\lambda\eps n \Omega^{M+1}/2^M$, by duplicating the $i$-th page, we get a book of $M+2$ pages one of whose coefficients is:
\beq
\lambda \frac{\eps n}{\eps_i n_i}\frac{\Omega^M}{2^{M-1}}\cdot \frac{1}{2}\eps_i ^2 n_i ^2 \frac{\Omega^2}{2^2}
\eeq
Continuing in this fashion, by duplicating $i$-th pages, $1 \leq i \leq M$, we end up having a term in $Z_f(L)$ of the form:
\beq
\lambda \Omega \prod_{1 \leq i \leq M}e^{\eps_i n_i \Omega /2}
\eeq
Still working with groups of crossings, if we have contributions from an even number of terms of the form $\pm\log \lambda_i /2 \pi i$ for the $i$-th page, $1 \leq i \leq M$, then by duplicating the $i$-th page we get a coefficient:
\beq
\frac{1}{2}\Big(\pm\frac{\Omega}{2 \pi i}\log \lambda_i \cdot \eps_i n_i \frac{\Omega}{2} + \frac{\Omega}{2}\eps_i n_i \cdot\pm\frac{\Omega}{2 \pi i}\log \lambda_i )=\pm\frac{\Omega}{2 pi i}\log\lambda_i\cdot \frac{\Omega}{2}\eps_i n_i
\eeq
Duplicating this $i$-th page once more, in order to get a real coefficient we need:
\beq
\frac{1}{3!}\Big(\pm\frac{\Omega}{2 \pi i}\log \lambda_i \cdot 3 \cdot \eps_i ^2 n_i ^2 \frac{\Omega^2}{2^2})=\pm\frac{\Omega}{2 \pi i}\log \lambda_i \cdot \frac{\Omega^2}{2}\eps_i ^2 n_i ^2 \frac{1}{2^2}
\eeq
Duplicating all pages, keeping in mind that we can only have an even number of contributions from $\log \lambda_i$ terms such as the one above, we would get a summand of $Z_f(L)$ of the form:
\beq
\lambda \Omega \cdot \Big(\prod_{1 \leq k \leq 2p} \pm\frac{1}{2 \pi i}\log \lambda_{i_k} \Omega )\cdot \prod_{ 1 \leq i \leq M}e^{\eps_i n_i \Omega/2}
\eeq
For contributions from associators and/or local extrema, if we consider a purely imaginary term of the form $\mu \Omega^{M+1}$, where the top $M+1$-st chord as well as an even number of contributions $\pm\log\rho_i/2 \pi i$ from associators and/or local extrema contribute to the overall factor of $\mu$, then duplicating the $i$-th page, we get a coefficient of the form:
\beq
\pm\frac{\Omega}{2 \pi i}\log \rho_i \cdot \eps_i n_i \frac{\Omega}{2}
\eeq
Duplicating this $i$-th page once more, we get:
\beq
\pm\frac{\Omega}{2 \pi i}\log \rho_i \cdot \frac{\Omega^2}{2}\eps_i^2 n_i^2 \frac{1}{2^2}
\eeq
It follows that upon doing this for all chords we end up getting a contribution to $Z_f(L)$ of the form:
\beq
\mu \Omega \cdot \frac{1}{\prod_{j \neq i_k}\eps_j n_j \frac{\Omega}{2}}\cdot \prod_{ 1 \leq k \leq 2p}\frac{1}{2 \pi i}\log \rho_{i_k} \Omega \cdot \prod_{ 1 \leq i \leq M} e^{\eps_i n_i \Omega /2}
\eeq
Next we consider contributions to $Z_{f,M+1}(L)$ that result from mixing an even number of contributions from associators and/or local extrema and/or purely imaginary contributions from crossings. The two previous computations show that we would get terms of the form:
\beq
\xi \Omega \cdot \varsigma \Omega^{2p} \cdot \prod_{ 1 \leq i \leq M} e^{\eps_i n_i \Omega /2}
\eeq
with $p \geq 1$. Finally notice that so far we have assumed that each chord is staying in a certain strip in which there is no other kind of chord, thus making the coefficients multiplicative. For those terms in $Z_{f,M+1}(L)$ corresponding to a region of integration where different chords end up in a same horizontal strip, the integration over the first time variable yields at least another iterated integral with one less integrand, times some number times $\Omega$. Repeating this procedure we end up with a number times a power of $\Omega$ times some term of the form discussed above, and upon duplication we get a term of the form:
\beq
\varphi \Omega^Q \cdot \xi \Omega \cdot \varsigma \Omega^{2p} \cdot \prod_{ 1 \leq i \leq M} e^{\eps_i n_i \Omega /2}
\eeq
where $\varphi \Omega^Q$ comes from $Q$ integrations and what remains of the above expression comes from duplicating all pages, and $p \geq 0$. It transpires of these considerations that the only term of lowest degree of the form:
\beq
\varphi \Omega^Q \cdot \xi \Omega \cdot \varsigma \Omega^{2p} \cdot \prod_{ 1 \leq i \leq M} e^{\eps_i n_i \Omega /2}
\eeq
is
\beq
\lambda \Omega \prod_{ 1 \leq i \leq M} e^{\eps_i n_i \Omega /2}
\eeq
and thus we can determine the desired contribution $\lambda \Omega$ from the additional $M+1$-st page.
\end{proof}
Armed with this lemma, we can therefore determine all the contributions from each individual $M+1$-st page added to the book of $M$ pages. Modulo $1 /(2 \pi i)$, the coefficients are:
\begin{align}
\pm \log \frac{b_1}{\Lambda}&=\alpha_1 \\
\mp \log \frac{b_1}{c_1}&=\alpha_2 \\
\pm \log \frac{b_2}{c_1}&=\alpha_3 \\
\mp \log \frac{b_2}{c_2}&=\alpha_4 \\
& \cdots \nonumber \\
\pm \log \frac{b_n}{c_{n-1}}&=\alpha_{2n-1} \\
\mp \log \frac{b_n}{c_n}&=\alpha_{2n}
\end{align}
At the point when the last chord of interest crosses the time line $t=t_{\Lambda}$, we consider the $2n$-th book of $M+1$ pages, whose coefficient is $\mp \log \frac{b_n}{c_n}=\alpha_{2n}$. We have:
\begin{align}
c_n&=b_n e^{\pm \alpha_{2n}} \\
&=c_{n-1}e^{\pm(\alpha_{2n}+\alpha_{2n-1})} \\
&=\cdots = \Lambda e^{\pm (\sum_{1 \leq i \leq 2n}\alpha_i)}
\end{align}
Since during this derivation we have chosen to work right above the highest group of crossings, we can only have shifting among strands at the top, from which it follows that we know whether $c_n > \Lambda$ or $c_n < \Lambda$. Thus we must have accordingly $ e^{\pm (\sum_{1 \leq i \leq 2n}\alpha_i)}>1$ or $ e^{\pm (\sum_{1 \leq i \leq 2n}\alpha_i)}<1$. From the knowledge of the coefficients of the $2n$ books of $M+1$ pages, we can determine the $\alpha_i$, $1 \leq i \leq 2n$, and therefore their sum. Only one sign makes the relation $e^{\pm (\sum_{1 \leq i \leq 2n}\alpha_i)}>1$ (or $ e^{\pm (\sum_{1 \leq i \leq 2n}\alpha_i)}<1$) possible. This sign is exactly the relative orientation of the portions of $K_i$ and $K_j$ we initially selected. Since we assumed that we knew the number of half-twists within each group of crossings as well as whether each such group is positive or negative, then we can tell the exact winding of each group of crossing between these two components. This we do for all linked components to fully determine crossings between all components. Self crossings are trivially known because we can determine the relative orientation of strands at each crossing.
\end{proof}
\begin{exdetect}
We illustrate in a simple example the movement of one chord emanating from the highest group of crossing and moving along two components until it crosses the line $t=t_{\Lambda}$. Consider the following portion of a tangle:
\beq
\setlength{\unitlength}{0.4cm}
\begin{picture}(9,10)
\thicklines
\put(2,0){\line(1,0){3}}
\put(2,0){\line(0,1){2}}
\put(2,2){\line(1,0){3}}
\put(5,0){\line(0,1){2}}
\put(2,1){\text{\tiny crossings}}
\qbezier(2.5,2)(2,6)(1,4)
\qbezier(1,4)(0.5,0)(0,8)
\multiput(0,8.5)(0,0.5){3}{\circle*{0.15}}
\qbezier(4.5,2)(5,10)(8,1)
\multiput(8.2,0.7)(0.2,-0.3){3}{\circle*{0.15}}
\end{picture}
\eeq
The first chord is coming out of the group of crossings at which point the separation between the strands is some number $\Gamma$, and goes up to the first local max marked by an asterisk, at which point the separation between the strands is some number $b_1$.
\beq
\setlength{\unitlength}{0.4cm}
\begin{picture}(9,10)
\put(2.5,2.2){\vector(1,0){2}}
\put(4.5,2.2){\vector(-1,0){2}}
\put(3,2.5){$\Gamma$}
\put(2,4.7){\vector(1,0){2.7}}
\put(4.7,4.7){\vector(-1,0){2.7}}
\put(3,5){$b_1$}
\put(1.5,5){$*$}
\thicklines
\multiput(2.3,3.5)(0.3,0){8}{\line(1,0){0.15}}
\put(2,0){\line(1,0){3}}
\put(2,0){\line(0,1){2}}
\put(2,2){\line(1,0){3}}
\put(5,0){\line(0,1){2}}
\put(2,1){\text{\tiny crossings}}
\qbezier(2.5,2)(2,6)(1,4)
\qbezier(1,4)(0.5,0)(0,8)
\multiput(0,8.5)(0,0.5){3}{\circle*{0.15}}
\qbezier(4.5,2)(5,10)(8,1)
\multiput(8.2,0.7)(0.2,-0.3){3}{\circle*{0.15}}
\end{picture}
\eeq
One cannot say that the contribution of that chord is $\pm \log b_2 /\Gamma$ since this chord can go below the group of crossings and thus becomes indistinguishable from the contribution from integrating below that group. This difficult remark aside, let us keep this chord going: the chord then reaches the turnaround marked by an asterisk and start going down the other side of the local max until it reaches the next local min at which point the separation between the strands is some number $c_1$, and the overall contribution of this tangle chord diagram is $\mp \log(b_1/c_1)$:
\beq
\setlength{\unitlength}{0.4cm}
\begin{picture}(9,10)
\put(0.5,2){$*$}
\put(2,4.7){\vector(1,0){2.7}}
\put(4.7,4.7){\vector(-1,0){2.7}}
\put(3,5){$b_1$}
\put(1,2.5){\vector(1,0){3.7}}
\put(4.7,2.5){\vector(-1,0){3.7}}
\put(3,3){$c_1$}
\thicklines
\multiput(1,4)(0.3,0){13}{\line(1,0){0.15}}
\put(2,0){\line(1,0){3}}
\put(2,0){\line(0,1){2}}
\put(2,2){\line(1,0){3}}
\put(5,0){\line(0,1){2}}
\put(2,1){\text{\tiny crossings}}
\qbezier(2.5,2)(2,6)(1,4)
\qbezier(1,4)(0.5,0)(0,8)
\multiput(0,8.5)(0,0.5){3}{\circle*{0.15}}
\qbezier(4.5,2)(5,10)(8,1)
\multiput(8.2,0.7)(0.2,-0.3){3}{\circle*{0.15}}
\end{picture}
\eeq
When the chord reaches the local min marked by an asterisk, it goes up until it reaches the next local max at which point the separation between the strands is some number $b_2$, for a contribution of $\pm \log (b_2/c_1)$:
\beq
\setlength{\unitlength}{0.4cm}
\begin{picture}(9,10)
\put(5.5,6.5){$*$}
\put(1,2.5){\vector(1,0){3.7}}
\put(4.7,2.5){\vector(-1,0){3.7}}
\put(3,3){$c_1$}
\put(0.3,6){\vector(1,0){5}}
\put(5.3,6){\vector(-1,0){5}}
\put(3,7){$b_2$}
\thicklines
\multiput(0.3,5.5)(0.3,0){17}{\line(1,0){0.15}}
\put(2,0){\line(1,0){3}}
\put(2,0){\line(0,1){2}}
\put(2,2){\line(1,0){3}}
\put(5,0){\line(0,1){2}}
\put(2,1){\text{\tiny crossings}}
\qbezier(2.5,2)(2,6)(1,4)
\qbezier(1,4)(0.5,0)(0,8)
\multiput(0,8.5)(0,0.5){3}{\circle*{0.15}}
\qbezier(4.5,2)(5,10)(8,1)
\multiput(8.2,0.7)(0.2,-0.3){3}{\circle*{0.15}}
\end{picture}
\eeq
The chord then goes down the other side of the local max marked by an asterisk until it reaches a local min, at which point the separation of the strands is some number $c_2$, and we have a contribution of $\mp \log(b_2/c_2)$:
\beq
\setlength{\unitlength}{0.4cm}
\begin{picture}(9,10)
\put(0.5,1.5){$*$}
\put(0.7,2.5){\vector(1,0){6.6}}
\put(7.3,2.5){\vector(-1,0){6.6}}
\put(3,3.5){$c_2$}
\put(0.3,6){\vector(1,0){5}}
\put(5.3,6){\vector(-1,0){5}}
\put(3,7){$b_2$}
\thicklines
\multiput(0.5,3)(0.3,0){23}{\line(1,0){0.15}}
\put(2,0){\line(1,0){3}}
\put(2,0){\line(0,1){2}}
\put(2,2){\line(1,0){3}}
\put(5,0){\line(0,1){2}}
\put(2,1){\text{\tiny crossings}}
\qbezier(2.5,2)(2,6)(1,4)
\qbezier(1,4)(0.5,0)(0,8)
\multiput(0,8.5)(0,0.5){3}{\circle*{0.15}}
\qbezier(4.5,2)(5,10)(8,1)
\multiput(8.2,0.7)(0.2,-0.3){3}{\circle*{0.15}}
\end{picture}
\eeq
However the chord at this point has not crossed the line $t=t_{\Lambda}$ yet so we keep moving it along, it goes up the other side of the turnaround until it reaches the next local max marked by an asterisk, at which point the separation between the strands is some number $b_3$ and we have a contribution of $\pm \log(b_3/c_2)$:
\beq
\setlength{\unitlength}{0.4cm}
\begin{picture}(9,10)
\put(1.5,5){$*$}
\put(0.7,2.5){\vector(1,0){6.6}}
\put(7.3,2.5){\vector(-1,0){6.6}}
\put(3,3){$c_2$}
\put(2,4.7){\vector(1,0){4.7}}
\put(6.7,4.7){\vector(-1,0){4.7}}
\put(3,5){$b_3$}
\thicklines
\multiput(1,4)(0.3,0){20}{\line(1,0){0.15}}
\put(2,0){\line(1,0){3}}
\put(2,0){\line(0,1){2}}
\put(2,2){\line(1,0){3}}
\put(5,0){\line(0,1){2}}
\put(2,1){\text{\tiny crossings}}
\qbezier(2.5,2)(2,6)(1,4)
\qbezier(1,4)(0.5,0)(0,8)
\multiput(0,8.5)(0,0.5){3}{\circle*{0.15}}
\qbezier(4.5,2)(5,10)(8,1)
\multiput(8.2,0.7)(0.2,-0.3){3}{\circle*{0.15}}
\end{picture}
\eeq
Finally the chord moves down the other side of the turnaround and finally crosses the time line $t=t_{\Lambda}$ at which point the strands are separated by a distance some number $c_3$ and we have a contribution of $\mp \log(b_3/c_3)$:
\beq
\setlength{\unitlength}{0.4cm}
\begin{picture}(9,10)
\put(2.5,2.3){\vector(1,0){5}}
\put(7.5,2.3){\vector(-1,0){5}}
\put(3,3){$c_3$}
\put(2,4.7){\vector(1,0){4.7}}
\put(6.7,4.7){\vector(-1,0){4.7}}
\put(3,5){$b_3$}
\thicklines
\multiput(2.5,2.1)(0.3,0){18}{\line(1,0){0.15}}
\put(2,0){\line(1,0){3}}
\put(2,0){\line(0,1){2}}
\put(2,2){\line(1,0){3}}
\put(5,0){\line(0,1){2}}
\put(2,1){\text{\tiny crossings}}
\qbezier(2.5,2)(2,6)(1,4)
\qbezier(1,4)(0.5,0)(0,8)
\multiput(0,8.5)(0,0.5){3}{\circle*{0.15}}
\qbezier(4.5,2)(5,10)(8,1)
\multiput(8.2,0.7)(0.2,-0.3){3}{\circle*{0.15}}
\end{picture}
\eeq
\end{exdetect}
\begin{plat}
For an unknown $q$-components link $L$ represented as a plat, from $Z_f(L)$ in book notation we can recover its expression in $\hat{\mathcal{A}}(\amalg^q S^1)$.
\end{plat}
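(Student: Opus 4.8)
The plan is to run the reconstruction machinery of Theorem \ref{Recovery} in the special case of a plat, where the Morse structure is rigid enough that the intermediate steps become essentially combinatorial. Recall that the plat closure of a braid $\beta \in B_{2k}$ is obtained by joining the $2k$ strands at the top by $k$ local maxima in adjacent pairs, joining them at the bottom by $k$ local minima in the same standard pattern, and inserting $\beta$ in between; thus every local maximum lies above every crossing and every local minimum below every crossing. A plat presentation of a link is therefore completely determined by the pair $(k,\beta)$, and $q$ itself is just the number of components of the resulting closure. Reconstructing $L$ from $Z_f(L)$ in book notation amounts to reading off $k$ and the braid word $\beta$ from the book data, after which the recovery of the expression in $\hat{\mathcal{A}}(\amalg^q S^1)$ is the same "place the chords on the recovered link" step as in Theorem \ref{Recovery}.

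First I would recover $k$ from Lemma \ref{findlocmax}: it returns the ordered list of local maxima, and since for a plat these all occur at the top level, their number is $k$, which also forces the cap/cup system to be the standard one. Next, Lemma \ref{detect}, applied to the first embedding of $Z_f(L)$ in its thread, returns every group of crossings together with its sign, its number of half-twists and the pair of strands it acts on; Proposition \ref{maxX}, whose "shifting" analysis trivializes here because nothing lies between the block of maxima and the block of crossings, then orders these groups top-to-bottom, i.e. recovers $\beta$ as a word in the $\sigma_i^{\pm n_i}$. (Alternatively, Proposition \ref{detectXpm} lets one bypass threading altogether, since in a plat each group of crossings is a power of a standard adjacent-strand generator, so the conjectural input needed there is automatically available.) Forming the plat closure of $\beta$ over the standard caps and cups reproduces $L$, and Lemma \ref{findlocmin} confirms the placement of the minima; this is precisely the map $\mathfrak{z}^{-1}$ of Theorem \ref{Recovery}, specialized to plats.

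With $L$ in hand I would use that the passage to book notation is coefficient-preserving: writing $Z_f(L)=\sum_{m\ge 0}\sum a_{IJUV}A_{IJUV}$, the $a_{IJUV}$ are literally the framed Kontsevich integral coefficients, and each book $A_{IJUV}$ prescribes, via the now-known Morse diagram of $L$, a unique tangle chord diagram (place a chord joining the indicated strips on the indicated components of $L$, after moving it up to a local maximum). This realizes $Z_f(L)$ in $\overline{\hat{\mathcal{A}}}(L)$, and the orientation-preserving identification of the Morse-presented circles of $L$ with $\amalg^q S^1$ transports it to the required element of $\hat{\mathcal{A}}(\amalg^q S^1)$, retaining isolated chords so that the value genuinely lies in $\hat{\mathcal{A}}$ and not merely in $\mathcal{A}$.

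The step I expect to be the main obstacle is the middle one: verifying that the raw output of Lemma \ref{detect} — component indices, strip indices, signs and half-twist counts — really determines which pair of adjacent braid strands each group $\sigma_i^{\pm n_i}$ acts on, and in which top-to-bottom order, so that $\beta$ (hence the plat, hence $L$) is unambiguous, and in particular that no non-standard cap/cup pattern could produce the same $Z_f(L)$. The rigidity of the plat — all crossings are powers of standard generators, the vertical order of the crossing groups is read directly from the page order of the corresponding books, and the caps and cups are forced to be standard by the count of maxima — should make this go through, but pinning it down carefully is where the real work lies; the final identification with $\hat{\mathcal{A}}(\amalg^q S^1)$ is then routine.
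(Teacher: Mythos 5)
Your proposal is essentially a specialization of the full recovery machinery of Theorem \ref{Recovery} to plats: you reconstruct $k$ from the list of maxima, the braid word $\beta$ from Lemma \ref{detect} and Proposition \ref{maxX}, the minima from Lemma \ref{findlocmin}, and then place the chords on the recovered link. The paper's own proof takes a genuinely different, and much lighter, route. It observes that for a plat one never needs to reconstruct $L$ at all: to translate a book into a chord diagram on $\amalg^q S^1$ it suffices to know how the $2k$ braid strands are matched up from top to bottom, i.e.\ the underlying permutation. That permutation is read off from the degree-one part alone: the real summand of the degree-one coefficient of a group of $n$ half-twists is $\pm\eps\,n/2$, which is fractional exactly when $n$ is odd, so the parity of the linking between any two strands --- hence which strands a given strand passes at the bottom --- is visible in $Z_{f,1}(L)$. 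This determines the permutation, hence the circle structure, hence the image of every book in $\hat{\mathcal{A}}(\amalg^q S^1)$. What the paper's approach buys is that the plat case becomes a short, self-contained degree-one argument, independent of threading and of the conjectural ingredients of Proposition \ref{detectXpm}; what your approach buys, if it were carried out, is the stronger conclusion that the plat itself (the pair $(k,\beta)$) is recovered, not just the expression in $\hat{\mathcal{A}}(\amalg^q S^1)$.

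The cost of your route is precisely the step you flag as "the main obstacle": Lemma \ref{detect} outputs crossing data indexed by link \emph{components} and strips, whereas reconstructing $\beta$ as a word in the $\sigma_i^{\pm n_i}$ requires locating each crossing group on a specific pair of adjacent \emph{braid-strand positions}, and a single component of the plat closure occupies many such positions. You assert that the rigidity of the plat makes this go through but do not carry it out, and you also lean on Proposition \ref{detectXpm}, whose proof in the paper is explicitly conditional on a conjecture. So as written your argument proves more than is asked but rests on an unverified middle step, whereas the statement actually being claimed only requires the permutation, for which the paper's parity argument already suffices.
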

\begin{proof}
It suffices to know the algebraic number of crossings between strands. The degree one coefficient of the following tangle (where a crossing is either positive or negative, $\lambda >0$, $\eps=\pm 1$, $n$ the number of half-twists):
\beq
\setlength{\unitlength}{0.3cm}
\begin{picture}(7,12)
\thicklines
\put(0,0){\line(4,3){4}}
\put(5,0){\line(-4,3){4}}
\multiput(1,3.5)(0,1){3}{\circle*{0.2}}
\multiput(4,3.5)(0,1){3}{\circle*{0.2}}
\put(1,6){\line(1,1){3}}
\put(4,6){\line(-1,1){3}}
\put(0.5,0){\vector(1,0){4}}
\put(4.5,0){\vector(-1,0){4}}
\put(2,-1.5){$\vartriangle \!\!z$}
\put(1,9.5){\vector(1,0){3}}
\put(4,9.5){\vector(-1,0){3}}
\put(1,10){$\lambda e^{i \eps n \pi}\!\! \vartriangle \!\! z$}
\end{picture}
\eeq
in $Z_{f,1}(L)$ is:
\beq
\pm \frac{1}{2 \pi i}\log \frac{\lambda e^{i \eps n \pi} \vartriangle \!\! z}{\vartriangle \!\!z}
=\pm \Big(\frac{1}{2 \pi i}\log \lambda \Big) \;\pm \eps \frac{n}{2}
\eeq
Thus if we have an odd number of half-twists between 2 strands, $n/2$ is fractional, an integer otherwise. It follows that from $Z_{f,1}(L)$ we can determine for a fixed strand the collection of strands to its right near the top of $L$ it has an odd number of half-twists with, or equivalently what are those strands to its right near the top of $L$ it passes on the right at the bottom of $L$. Doing this for all strands we can determine the permutation that to any strand near the top of $L$ associates a strand near the bottom of the link. This is sufficient to associate to any book the element of $\mathcal{A}(\amalg^q S^1)$ it corresponds to.
\end{proof}

\end{document}